\newtheorem{theorem}{Theorem}[section]
\newtheorem{lemma}[theorem]{Lemma}
\newtheorem{proposition}[theorem]{Proposition}
\theoremstyle{definition}
\newtheorem{definition}[theorem]{Definition}
\newtheorem{remark}[theorem]{Remark}
\newcommand\E{\mathbb{E}}
\newcommand\Z{\mathbb{Z}}
\newcommand\C{\mathbb{C}}
\newcommand\N{\mathbb{N}}
\newcommand\op{{\operatorname{op}}}
\newcommand\Cond{\mathtt{Cond}}
\newcommand\CondTilde{{\widetilde{\mathtt{Cond}}}}
\newcommand\Aut{\operatorname{Aut}}
\newcommand\eps{\varepsilon}
\newcommand\Baire{\mathcal{B}a}
\newcommand\Borel{\mathcal{B}o}
\newcommand\AP{\mathtt{AP}_{X|Y}}
\newcommand\WM{\mathtt{WM}_{X|Y}}
\newcommand\Cat{\mathcal{C}}
\newcommand\AbsMes{\mathbf{AbsMbl}}
\newcommand\CH{\mathbf{CH}}
\newcommand\CHProb{{\mathbf{CHPrb}}}
\newcommand\CHProbG{{\mathbf{CHPrb}_\Gamma}}
\newcommand\SigmaAlg{{\mathbf{Bool}_\sigma}}
\newcommand\ProbAlg{{\mathbf{PrbAlg}}}
\newcommand\OpProbAlg{{\mathbf{PrbAlg}}}
\newcommand\OpProbAlgG{{\mathbf{PrbAlg}_\Gamma}}
\newcommand\CStarAlg{{\mathbf{CC^*Alg}}}
\newcommand\CStarAlgUnit{{\mathbf{CC^*Alg}}}
\newcommand\CStarAlgTrace{{\mathbf{CC^*Alg}^\tau}}
\newcommand\CStarAlgUnitTrace{{\mathbf{CC^*Alg}^\tau_{\Gamma^\op}}}
\newcommand\vonNeumann{{\mathbf{CvNAlg}^\tau}}
\newcommand\vonNeumannG{{\mathbf{CvNAlg}^\tau_{\Gamma^\op}}}
\newcommand\Spec{\mathtt{Spec}}
\newcommand\Stone{\mathtt{Conc}}
\newcommand\Idem{\mathtt{Proj}}
\newcommand\Riesz{\mathtt{Riesz}}
\newcommand\CFunc{C}
\newcommand\Linfty{L^\infty}
\newcommand\Inv{\mathtt{Inv}_\Gamma}
\newcommand\cip[2]{\langle #1,#2\rangle_{L^2(X|Y)}}
\newcommand\cnorm[1]{\|#1\|_{L^2(X|Y)}}
\newcommand\cltwo{\mathtt{L^2(X|Y)}}
\newcommand\chs{\mathtt{HS(X|Y)}}
\newcommand\orb{\mathtt{Orb}_\Gamma}
\newcommand\hsnorm[1]{\|#1\|_{\mathtt{HS}(X|Y)}}
\newcommand\pmet{\mathtt{d}_{\cltwo}}
\newcommand\pmeths{\mathtt{d}_{\chs}}
\newcommand\cdim{\mathtt{cdim}}
\begin{document}

%%%%% To ease editing, for IMPAN journals add:

\baselineskip=17pt

%%%%%%%%%%%%%%%%

\title{An uncountable Furstenberg--Zimmer structure theory}

\author[A. Jamneshan]{Asgar Jamneshan}
\address{University of Bonn, Mathematical Institute, Endenicher Allee 60, D-53115 Bonn}
\email{ajamnesh@math.uni-bonn.de}

%\date{\today}

\begin{abstract} 
Furstenberg--Zimmer structure theory refers to the extension of the dichotomy between the compact and weakly mixing parts of a measure preserving dynamical system and the algebraic and geometric descriptions of such parts to a conditional setting, where such dichotomy is established relative to a factor and conditional analogues of those algebraic and geometric descriptions are sought. 
Although the unconditional dichotomy and the characterizations are known for arbitrary systems, the relative situation is understood under certain countability and separability hypotheses on the underlying groups and spaces.  
The aim of this article is to remove these restrictions in the relative situation and establish a Furstenberg--Zimmer structure theory in full generality. 
As an independent byproduct, we establish a connection between the relative analysis of systems in ergodic theory and the internal logic in certain Boolean topoi. 
\end{abstract}

%\subjclass[2020]{Primary ; Secondary .}

%\keywords{}

\maketitle

\setcounter{tocdepth}{1}

\section{Introduction} \label{sec-intro}

\subsection{Motivation}
In the pioneering work \cite{furstenberg1977ergodic}, Furstenberg applied the countable\footnote{Here countable refers to systems of countable measure-theoretic complexity such as measure-preserving $\mathbb{Z}$-actions on standard Borel probability spaces. The precise meaning of countable measure-theoretic complexity is given later in the introduction.} Furstenberg--Zimmer structure theory, developed by him in the same article and by Zimmer in \cite{zimmer1976ergodic,zimmer1976extension} around the same time, to prove multiple recurrence for $\Z$-actions and show that this multiple recurrence theorem is equivalent to  Szemer\'edi's theorem \cite{szemeredi1975sets}. 
The Furstenberg--Zimmer structure theory is significantly refined in the Host--Kra--Ziegler structure theory \cite{host2005nonconventional,ziegler2007universal} which establishes a  description of the characteristic factors governing multiple recurrence for $\Z$-actions as inverse limits of rotations on nilmanifolds. 
The finitary counterpart of the Host--Kra--Ziegler structure theorem is known as the inverse theorem for the Gowers uniformity norms by Green, Tao, and Ziegler \cite{gtz} - a foundational result in the area of higher-order Fourier analysis initiated by Gower's seminal work \cite{gowers1,gowers2} on Szemer\'edi's theorem. 

Beyond the natural (intrinsic) motivation of generalizing ergodic theory to uncountable systems, we hope that an uncountable Host--Kra--Ziegler structure theory (such as for the action of hyperfinite abelian groups on Loeb probability spaces) may help to clarify the relationship between the ergodic-theoretical and analytic (higher-order Fourier analysis) approaches to Szemer\'edi's theorem. See \cite{jt21-1,jst} for some recent progress. 

By proving an uncountable Moore--Schmidt theorem \cite{jt19} and an uncountable Mackey-Zimmer theorem \cite{jt20} (and clarifying in \cite{jt-foundational} some foundational aspects of uncountable measure theory), first basic results in ergodic structure theory were successfully extended to uncountable settings.  The present paper aims to establish another part  by developing an uncountable Furstenberg--Zimmer structure theory. 

The uncountable Furstenberg--Zimmer structure theory of this paper was applied in the joint work \cite{roth} by the author and others to obtain an uncountable and uniform extension of the double recurrence theorem for amenable groups by Bergelson, McCutcheon, and Zhang \cite{bergelson1997roth}.

\subsection{Main results}

Let $\Gamma$ be a group and $X$ be a $\sigma$-complete Boolean algebra equipped with a countably additive measure $\mu$ of total mass $1$. 
Suppose that $\Gamma$ acts on the probability algebra\footnote{In the ergodic theory literature, it is common to call the pair $(X,\mu)$, where $X$ is a $\sigma$-complete Boolean algebra and $\mu$ is a countably additive probability measure on $X$, a \emph{measure algebra}, e.g., see \cite{glasner2015ergodic,kerrli}. We follow the conventions in Fremlin \cite{fremlinvol3} to call $(X,\mu)$ a \emph{probability algebra} and reserve the name measure algebra for abstract measures of arbitrary total mass.} $(X,\mu)$ by a group homomorphism\footnote{This definition is equivalent to the on given in Definition \ref{def-opprobalg} where by reversing the arrows in the category of probability algebras we can work with an action of $\Gamma$ instead of its opposite.} $T:\Gamma^\op\to \Aut(X,\mu)$, where $\Aut(X,\mu)$ denotes the automorphism group of $(X,\mu)$ and $\Gamma^\op$ the opposite group. The automorphisms of $(X,\mu)$ are measure-preserving bijective Boolean homomorphisms of $X$.  
We call $\mathcal{X}=(X,\mu,T)$ a \emph{probability algebra $\Gamma$-dynamical system}. 
The category of such dynamical systems and some basic tools to study them are collected in the preliminaries in Section \ref{sec-prelim}. 
A main goal of this paper is to establish the following

\begin{theorem}[Uncountable Furstenberg--Zimmer structure theorem]\label{thm-FZ}
Let $\Gamma$ be a group. 
For any  probability algebra $\Gamma$-dynamical system $\mathcal{X}=(X,\mu,T)$ there exist an ordinal $\beta$ and for each ordinal $\alpha\leq \beta$ a factor $\mathcal{X}\to \mathcal{Y}_\alpha=(Y_\alpha,\nu_\alpha,S_\alpha)$ satisfying the following properties: 
\begin{itemize}
\item[(i)] $\mathcal{Y}_0$ is trivial.  
\item[(ii)] $\mathcal{Y}_{\alpha+1}\to \mathcal{Y}_{\alpha}$ is a relatively compact extension for every successor ordinal $\alpha+1\leq \beta$. 
\item[(iii)] $\mathcal{Y}_\alpha$ is the inverse limit of the systems $\mathcal{Y}_{\eta}$, $\eta<\alpha$ for every limit ordinal $\alpha\leq \beta$, in the sense that $Y_\alpha$ is generated by $\bigcup_{\eta<\alpha} Y_\eta$ as a $\sigma$-complete Boolean algebra.     
\item[(iv)]  $\mathcal{X}\to \mathcal{Y}_\beta$ is a weakly mixing extension.  
\end{itemize}
\end{theorem}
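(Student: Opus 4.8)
The plan is to build the tower $(\mathcal{Y}_\alpha)_{\alpha\le\beta}$ by transfinite recursion, at each stage maximally enlarging the factor by its relatively compact part, and to use a cardinality/complexity bound to guarantee that the recursion terminates. First I would set $\mathcal{Y}_0$ to be the trivial factor, which handles (i). For the successor step, given the factor $\mathcal{X}\to\mathcal{Y}_\alpha$, I would form the \emph{maximal relatively compact extension} of $\mathcal{Y}_\alpha$ inside $\mathcal{X}$: concretely, let $Y_{\alpha+1}$ be the $\sigma$-complete Boolean subalgebra of $X$ generated by all $\Gamma$-invariant finitely generated (equivalently, almost periodic) submodules of $L^2(X|Y_\alpha)$ — these are the elements of $X$ whose associated functions lie in the relatively almost periodic part $\mathtt{AP}_{X|Y_\alpha}$. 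Since $T$ preserves this class, $Y_{\alpha+1}$ is $\Gamma$-invariant, so $\mathcal{Y}_{\alpha+1}=(Y_{\alpha+1},\mu\restriction_{Y_{\alpha+1}},T\restriction)$ is a factor, and $\mathcal{Y}_{\alpha+1}\to\mathcal{Y}_\alpha$ is relatively compact by construction, giving (ii). At limit ordinals $\alpha$, I would define $Y_\alpha$ to be the $\sigma$-complete Boolean algebra generated by $\bigcup_{\eta<\alpha}Y_\eta$ inside $X$; this is manifestly $\Gamma$-invariant and is the inverse limit in the stated sense, giving (iii).

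The termination argument is where the ordinal $\beta$ comes from: I would argue that if $\mathcal{Y}_{\alpha+1}=\mathcal{Y}_\alpha$ — i.e.\ the extension $\mathcal{X}\to\mathcal{Y}_\alpha$ has no nontrivial relatively compact subextension — then $\mathcal{X}\to\mathcal{Y}_\alpha$ is already weakly mixing, and we stop with $\beta=\alpha$, yielding (iv). To see the process must eventually stabilize, I would use the conditional measure-theoretic complexity (the invariant $\mathtt{cdim}$ introduced in the preliminaries): the subalgebras $Y_\alpha$ form a strictly increasing chain in $X$ as long as the tower does not stabilize, and one shows the chain length is bounded by an ordinal depending only on the density character / Maharam type of $(X,\mu)$. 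Equivalently, one observes that $\bigcup_\alpha Y_\alpha$ is a well-defined $\Gamma$-invariant $\sigma$-subalgebra, so the chain is eventually constant for cardinality reasons, and the first stabilization ordinal is the desired $\beta$.

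The main technical input — and the step I expect to be the principal obstacle — is the \emph{conditional Furstenberg--Zimmer dichotomy}: that an extension $\mathcal{X}\to\mathcal{Y}$ which admits no nontrivial relatively compact intermediate extension must be weakly mixing. In the countable setting this is proved via a spectral/Hilbert--Schmidt argument on $L^2(X|Y)$ using separability to extract almost periodic vectors; here, with no countability on $\Gamma$ and no separability on $X$, I would instead work with the conditional Hilbert module $\mathtt{HS}(X|Y)$ and the conditional operators $\mathtt{OP}(X|Y)$ over the von Neumann / $L^\infty(Y)$ base, showing that the obstruction to weak mixing is carried by a nonzero conditionally Hilbert--Schmidt operator commuting with the $\Gamma$-action, whose spectral projections then generate a nontrivial relatively compact subextension. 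Making this rigorous requires the conditional functional-analytic machinery — conditional Bessel/Parseval, existence of conditional eigenspaces, and the fact that finite-trace conditional projections land in $\mathtt{AP}_{X|Y}$ — which is exactly what the "novel tools" of the paper are designed to provide; so I would invoke the relevant lemmas from the body of the paper rather than reprove them here.

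Finally I would verify the bookkeeping: that each $\mathcal{Y}_\alpha$ is genuinely a factor of $\mathcal{X}$ (the structure maps are the inclusions $Y_\eta\hookrightarrow Y_\alpha\hookrightarrow X$, compatible with the actions), that the construction is monotone so the factors form a directed system, and that properties (i)--(iv) hold for the terminal ordinal $\beta$ produced above. The routine compatibility checks and the precise form of the complexity bound I would relegate to the preliminaries; the conceptual content is the recursion plus the conditional dichotomy.
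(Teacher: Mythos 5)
Your proposal is correct and follows essentially the same route as the paper: a transfinite recursion that starts from the trivial factor, takes the maximal relatively almost periodic factor $\mathtt{AP}_{X|Y_\alpha}$ at successor stages via the relative dichotomy (Theorem \ref{thm-dichotomy1}), passes to inverse limits (the $\sigma$-algebra generated by the union) at limit stages, and terminates because the strictly increasing chain of invariant subalgebras of $X$ must stabilize for cardinality reasons. The conditional Hilbert--Schmidt machinery you identify as the main obstacle is exactly what the paper develops in Sections \ref{sec-condanal}--\ref{sec-structure} to prove that dichotomy, so invoking it rather than reproving it is the intended structure of the argument.
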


Theorem \ref{thm-FZ} follows  from the following relative dichotomy between relatively weakly mixing and relatively compact extensions of  probability algebra $\Gamma$-dynamical systems by  transfinite induction. 

\begin{theorem}[Uncountable relative dichotomy]\label{thm-dichotomy1}
Let $\mathcal{X}=(X,\mu,T)$ and $\mathcal{Y}=(Y,\nu,S)$ be  probability algebra $\Gamma$-dynamical systems and $\pi: \mathcal{X}\to \mathcal{Y}$ an extension.  
Exactly one of the following statements is true.  
\begin{itemize}
\item[(i)] $\pi: \mathcal{X}\to \mathcal{Y}$ is a relatively weakly mixing extension.  
\item[(ii)] There exist an  probability algebra $\Gamma$-dynamical system $\mathcal{Z}=(Z,\lambda,R)$ and extensions $\phi:\mathcal{X}\to \mathcal{Z}$ and $\psi:\mathcal{Z}\to \mathcal{Y}$ such that $\psi$ is a non-trivial relatively compact extension.  
\end{itemize}
\end{theorem}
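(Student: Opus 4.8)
The plan is to reduce the dichotomy to a structural decomposition of the conditional Hilbert space $L^2(X|Y)$ attached to the extension $\pi$. First I would set up $L^2(X|Y)$ as an $L^\infty(\nu)$-module with conditional inner product $\cip{\cdot}{\cdot}$ taking values in $L^1(\nu)$, carrying the $\Gamma$-action induced by $T$, together with its space $\chs$ of conditionally Hilbert--Schmidt operators. Call a vector $f\in L^2(X|Y)$ \emph{relatively almost periodic} if its $\Gamma$-orbit $\orb(f)$ is conditionally precompact, equivalently if $f$ lies in a $\Gamma$-invariant submodule of finite conditional rank, let $\AP$ denote the closed submodule they generate and $\WM=\AP^{\perp}$ its conditional orthogonal complement. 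With these conventions $L^2(\nu)\subseteq\AP$ always, the extension $\pi$ is relatively compact precisely when $\AP=L^2(X|Y)$, and it is relatively weakly mixing precisely when $\AP=L^2(\nu)$ (equivalently, when the relatively independent self-joining $\mathcal X\times_{\mathcal Y}\mathcal X$ is relatively ergodic over $\mathcal Y$); so alternative (i) is exactly the statement $\AP=L^2(\nu)$.

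The core of the argument, and the step I expect to be the main obstacle, is a conditional Peter--Weyl/spectral lemma: if $\pi$ is \emph{not} relatively weakly mixing, then there is a non-zero relatively almost periodic vector $f$ with vanishing conditional expectation $\cip{f}{1}=0$ onto $Y$. To prove this I would use that failure of relative weak mixing produces a non-zero $\Gamma$-invariant $K\in\chs$ that is not of the trivial form (not supported on $L^2(\nu)$), form the positive self-adjoint operator $K^{\ast}K$, and run a relative spectral decomposition to peel off a $\Gamma$-invariant spectral projection of finite conditional rank; its range consists of relatively almost periodic vectors, from which a non-zero one orthogonal to $L^2(\nu)$ can be selected. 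In the uncountable setting this spectral step cannot lean on separability of $L^2(X)$ or on disintegrating $\mu$ over $\nu$, so it must be carried out intrinsically inside the module $L^2(X|Y)$; this is where the abstract probability-algebra viewpoint and the conditional-analytic tools of Section~\ref{sec-prelim} carry the real weight.

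Granting such an $f$, I would truncate its real and imaginary parts to obtain elements of $\AP\cap L^\infty(X)$ that are not $Y$-measurable, and then let $Z$ be the $\sigma$-complete Boolean subalgebra of $X$ generated by $Y$ together with the level sets of all bounded relatively almost periodic functions. One checks that $\AP\cap L^\infty(X)$ is a $T$-invariant subalgebra of $L^\infty(X)$, closed under the relevant (bounded, monotone) limits, so that $Z$ is $T$-invariant and defines an opposite probability algebra $\Gamma$-dynamical system $\mathcal Z=(Z,\mu|_Z,T|_Z)$ together with factor maps $\phi:\mathcal X\to\mathcal Z$ and $\psi:\mathcal Z\to\mathcal Y$. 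By construction $L^2(Z|Y)=\overline{\AP}\supsetneq L^2(\nu)$ and $L^\infty(Z)$ is generated over $L^\infty(\nu)$ by bounded almost periodic functions, so $\psi$ is a non-trivial relatively compact extension; this yields alternative (ii).

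Finally I would verify that the two alternatives are mutually exclusive, which gives the "exactly one". Suppose $\psi:\mathcal Z\to\mathcal Y$ is a non-trivial relatively compact extension with $\mathcal X\to\mathcal Z\to\mathcal Y$. Then every relatively almost periodic vector for $\mathcal Z\to\mathcal Y$ remains relatively almost periodic for $\mathcal X\to\mathcal Y$ (orbit precompactness is preserved along the isometric inclusion $L^2(Z|Y)\hookrightarrow L^2(X|Y)$), so $L^2(\nu)\subsetneq L^2(Z|Y)=\overline{\AP_{\mathcal Z\mid\mathcal Y}}\subseteq\overline{\AP}$, whence $\AP\neq L^2(\nu)$ and $\pi$ is not relatively weakly mixing. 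Together with the contrapositive established above (not relatively weakly mixing implies (ii)), this shows exactly one of (i), (ii) holds, proving Theorem~\ref{thm-dichotomy1}; feeding this dichotomy into a transfinite induction — iterating relatively compact extensions at successor stages and passing to inverse limits at limit stages — then yields Theorem~\ref{thm-FZ}.
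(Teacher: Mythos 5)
Your proposal follows essentially the same route as the paper's proof: failure of relative weak mixing yields (via the Alaoglu--Birkhoff theorem applied to the orbit closure of $f\otimes\bar f$, which is the one averaging tool available for an arbitrary group $\Gamma$) a non-trivial $\Gamma$-invariant conditional Hilbert--Schmidt kernel, whose spectral projections give finitely generated invariant submodules of almost periodic vectors, an intermediate factor is generated by bounded almost periodic functions, and mutual exclusivity reduces to the conditional Cauchy--Schwarz orthogonality of $\AP$ and $\WM$ (which your asserted equivalence ``(i) iff $\AP=L^2(\nu)$'' silently encodes). The only cosmetic difference is that the paper builds $\mathcal Z$ from the $*$-algebra of bounded functions in the range of the single operator $K\ast_Y$ (passed through the $\Idem$ functor) rather than from the full algebra of bounded almost periodic functions, but both constructions work.
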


We prove Theorem \ref{thm-dichotomy1} and Theorem \ref{thm-FZ} in Section \ref{sec-structure}. Compact and weakly mixing extensions of  probability algebra systems are defined in Sections \ref{sec-compact} and \ref{sec-structure} respectively.  
Moreover, we establish various characterizations of relatively compact extensions in terms of (a) certain invariant finitely generated modules, (b) non-trivial invariant conditional Hilbert-Schmidt operators, and (c) conditionally almost periodic orbits. 
We establish the equivalence of (a)-(c) in smaller conditional $L^\infty$-modules similarly to the classical theory and also in larger conditional $L^0$-modules (where $L^0$ denotes the ring of equivalence classes of all complex measurable functions), and we show that these descriptions in the larger and the smaller modules are equivalent.  
In the case of ergodic systems, we prove  in Section \ref{sec-isometric} that relatively compact extensions are isomorphic to isometric extensions (that is,  probability algebra homogeneous skew-product systems as introduced in \cite{jt20}).   

\begin{table}
\begin{tabular}{|c|c|}
\hline
\textbf{Countable complexity} & \textbf{Uncountable complexity} \\
\hline
\hline
Countable group actions & Uncountable group actions \\
\hline
Standard Borel probability spaces & Inseparable probability algebras  \\ 
\hline 
Pointwise action by measurable &  Pointfree action by  Boolean \\
 functions & homomorphisms \\ 
\hline
Cantor models & Canonical Stonean models \\
\hline
Borel measurability & Baire measurability \\
\hline
Homogeneous skew-product extensions  & Homogeneous skew-product extensions  \\
 by metrizable compact groups  & by compact Hausdorff groups \\
\hline 
von Neumann mean ergodic theorem & Alaoglu-Birkhoff ergodic theorem \\
\hline 
 Countable Furstenberg towers & Uncountable Furstenberg towers \\ 
\hline
Classical disintegration of measures & Canonical disintegration of measures \\
\hline
measurable Hilbert bundles & Conditional Hilbert spaces \\ 
\hline
Measurable selection theory & Conditional analysis \\ 
\hline
\hline
\end{tabular}
\caption{\label{tab-complexity} The group-theoretic, measure-theoretic, topological and methodological differences of systems of countable and uncountable complexity (with a view towards their  Furstenberg--Zimmer structure theory).} 
\end{table}

These results are well understood for systems of countable measure-theoretic complexity, see  \cite{furstenberg1977ergodic,zimmer1976ergodic,zimmer1976extension} for the original papers and \cite{einsiedler2010ergodic,furstenberg2014recurrence,glasner2015ergodic,kerrli,tao2009poincare} for some textbook expositions.  The aim of this paper is to extend them to systems of uncountable measure-theoretic complexity. We say that a probability algebra $\Gamma$-dynamical system $(X,\mu,T)$ has \emph{countable measure-theoretic complexity} if $\Gamma$ is countable and $(X,\mu)$ is separable\footnote{We have a metric structure on $X$ defined by $d(E,F)=\mu(E\Delta F)$.}, and has \emph{uncountable measure-theoretic complexity} otherwise. In Table \ref{tab-complexity}, we compare some features of systems of countable and uncountable complexity. 

We establish a Furstenberg--Zimmer structure theory for not necessarily countable groups and not necessarily separable probability algebras. 
To achieve such generality we have to introduce novel tools to study the  structure of probability algebra systems relative to some factor since many classical tools such as disintegration of measures, measurable selection lemmas, and the theory of measurable Hilbert bundles rely on such countability and separability hypotheses.  Instead, we apply tools from topos theory as suggested in \cite{taotopos} by Tao.  
We will not use topos theory directly nor define a sheaf anywhere in this paper.   
Instead, we use the closely related conditional analysis as developed in \cite{cheridito2015conditional,drapeau2016algebra,filipovic2009separation,jamneshan2018measure}.  
A main advantage of conditional analysis is that it can be setup without much costs and understood with basic knowledge in measure theory and functional analysis.   
The ergodic theoretic results of this paper are organized in a self-contained presentation and no familiarity with topos theory is required for the reader purely interested in the ergodic theoretic content. In fact, the use of conditional analysis and the greater generality lead to significant simplifications of the proofs of several results in  Furstenberg--Zimmer structure theory. 

For the readers familiar with topos theory and interested in the connection between ergodic theory and topos theory, we include several remarks relating the conditional analysis of this paper to the internal discourse in Boolean Grothendieck sheaf topoi. 
To the best knowledge of the author these remarks comprise novel insight into the semantics of such topoi.

We would like to point out that in the recent work \cite{ehk} a different proof of Theorem \ref{thm-FZ} is provided which is based on a functional analytic decomposition theorem for group representations on Kaplansky-Hilbert modules. 

\subsection{Technical innovations} 

\emph{Apriori} probability algebras are not $\sigma$-algebras of subsets of an underlying set. 
We view probability algebras as abstract pointfree probability spaces as opposed to concrete point-set probability spaces defined on $\sigma$-algebras of sets. 
In countable ergodic theory, the starting point are concrete countably generated probability spaces on which a measure-preserving action by a countable or second-countable group on the underlying set  is defined. 
In the countable theory, a distinction is made between an action which is defined everywhere on the underlying set and a near-action which is only defined almost everywhere, e.g., see \cite{zimmer1976extension}. 
This distinction is non-trivial, since there are natural examples of systems of countable complexity for Polish group near-actions that cannot be realized by a pointwise action, e.g., see \cite{gtw,mooresolecki}. 

In uncountable ergodic theory, the starting point are arbitrary (not necessarily countably generated or separable) probability algebras on which an arbitrary (not necessarily countable or second-countable) group acts by measure-preserving Boolean isomorphisms. 
A major advantage of working with pointfree probability algebras is that one  systematically avoids to have to deal with null set issues. 
In this regard, the abstract pointfree perspective, which has been already implemented in our previous works \cite{jt19,jt20} and systematically developed in the foundational paper \cite{jt-foundational}, seems to be more natural in uncountable ergodic theory. 

Furthermore, the category of probability algebra dynamical systems is dually equivalent to the dynamical category of commutative tracial von Neumann algebras (see Figure \ref{fig:canon}) such that all structural results established in this paper for probability algebra dynamical systems have natural analogues in the dynamical category of commutative tracial von Neumann algebras, see \cite[\S 3.2]{kerrli} for this correspondence for systems of countable complexity.  
See \cite[\S 2]{popa} for a proof of a non-commutative Furstenberg--Zimmer type dichotomy of relatively weakly mixing and relatively compact extensions for countable group actions on separable von Neumann algebras. 
In fact, the analogies extend also to the other two dynamical categories of commutative tracial $C^*$-algebras and topological dynamical systems preserving a Baire-Radon probability measure in Figure \ref{fig:canon} by restricting to the image of the $\Stone$ and $\mathtt{Inc}$ functors, see Section \ref{sec-prelim} for the definition of these functors. 

On the other hand, by quotienting out the null ideal, to any concrete countable complexity system we can associate an abstract probability algebra system. 
In this regard, the abstract perspective is implicitly present in the countable theory, e.g., see \cite[\S 5]{furstenberg2014recurrence}, \cite[\S 2]{glasner2015ergodic}, the preliminaries of \cite{zimmer1976ergodic,zimmer1976extension}, and the expositions in \cite{tao2009poincare,kerrli}.

In order to study the Furstenberg--Zimmer structure theory of abstract pointfree systems analogues of some basic and more advanced tools from measure theory and functional analysis are required. 
These are basic Hilbert theory of $L^2$-spaces, disintegration of measures, and the theory of measurable Hilbert bundles.  
However, classical disintegration of measures and the theory of measurable Hilbert bundles are in general only applicable for systems of countable measure-theoretic complexity. 
Therefore, the main technical difficulty in extending the Furstenberg--Zimmer structure theory is to find viable alternatives which work for abstract systems of uncountable measure-theoretic complexity as well. 
We divide the above tools into two sets. The first set are basic tools from measure theory and the second one consists of tools needed for the analysis relative to some factor. 

For the first group we have two options. 
We can either use the abstract measure theory for measure algebras as systematically developed in \cite{fremlinvol3}, or we can use concrete measure theory after passing to a \emph{canonical model}. 
The canonical model, which is introduced in Section \ref{sec-prelim} (see also \cite{EFHN} and the references given in Remark \ref{rem-conc}), is a functor associating to any probability algebra dynamical system a unique compact Hausdorff probability measure-preserving dynamical system where the underlying group acts by homeomorphisms. The canonical model comes  with some favorable properties, for example a canonical disintegration of measures, see Section \ref{sec-canonical}. 

We replace the second set of tools by conditional analysis which permits to perform analysis in relative situations without any countability or separability assumptions. 
More precisely, we introduce a conditional Hilbert space and its conditional tensor product representing conditional Hilbert--Schmidt operators, and we study a conditional Gram--Schmidt process and conditional spectral properties of these conditional Hilbert--Schmidt operators in Section \ref{sec-condanal}. 

We found it convenient to explicitly use the language of category theory in this paper. It helps to clearly separate the concrete from the abstract probability spaces, and clarifies the relationships between these categories by way of \emph{functorial relations}. These functors work particularly well with dynamics. This allows to safely pass back and forth between these categories, and thus take advantage of tools available in both the concrete and abstract settings. Functors establishing dualities of categories such as the useful Gelfand duality permit to efficiently translate into functional analytic categories to a similar effect. Practically, these categorical relations help with a basic housekeeping, avoiding making unnecessary mistakes. An illustrating example may be the clarification of category-dependent notions of measurability and various product constructions whose relations to each other can be worked out by functorial relations, cf.~\cite{jt-foundational, jt20}.  In fact, once these categorical relations are explicitly established and well understood, certain frequently used constructions and arguments automatize and simplify.

\subsection*{Acknowledgements}
The author was supported by DFG-research fellowship JA 2512/3-1. 
The author offers his thanks to Terence Tao for suggesting this project, many helpful discussions, and his encouragement and support.  
He is grateful to Pieter Spaas for several helpful discussions.    
The author thanks Markus Haase for organizing an online workshop on structural ergodic theory where the results of this paper and the parallel work \cite{ehk} could be discussed, and Nikolai Edeko, Markus Haase and Henrik Kreidler for helpful comments on an early version of the manuscript. The author is indebted to the anonymous referee for several useful suggestions and corrections.

\section{Preliminaries}\label{sec-prelim}

\subsection{The categories}

Throughout this paper, we fix an arbitrary group $\Gamma$.  
We define the category $\OpProbAlgG$ of probability algebra $\Gamma$-dynamical systems to be the opposite of a category of Boolean algebras equipped with a probability measure-preserving dynamical structure. 

A thorough discussion of some categorical aspects of ergodic theory and adjacent areas in measure theory and functional analysis, such as the auxiliary categories depicted in Figure \ref{fig:canon}, together with useful references to basic category theory can be found in the accompanying paper \cite{jt-foundational}, to the relevant parts of which we will provide exact referencing as we proceed. The interested reader is also referred to \cite{jt-foundational} for our categorical notations and conventions. 

\begin{definition}[The category $\OpProbAlgG$]\label{def-opprobalg} \
\begin{itemize}
    \item[(i)] A $\SigmaAlg$-object is a \emph{$\sigma$-complete Boolean algebra} $X=(X,\vee,\wedge, \bar \cdot, 0,1)$ and a $\SigmaAlg$-morphism from a $\SigmaAlg$-algebra $X$ to another $\SigmaAlg$-algebra $Y$ is a \emph{$\sigma$-complete Boolean homomorphism} $f:X\to Y$. 
    \item[(ii)] The category $\AbsMes$ of \emph{abstract measurable spaces} is the opposite category of $\SigmaAlg$. 
    \item[(iii)] The objects of $\ProbAlg$ are \emph{probability algebras} $(X,\mu)$ where $X$ is a $\SigmaAlg$-algebra and $\mu:X\to [0,1]$ is a countably additive probability measure, that is (i) $\mu(\bigvee_n E_n)=\sum_n \mu(E_n)$ for any sequence $(E_n)$ of pairwise disjoint elements in $X$, (ii) $\mu(X)=1$, and (iii) $\mu(E)=0$ if and only if $E=0$. A \emph{$\ProbAlg$-morphism} from a $\ProbAlg$-algebra $(X,\mu)$ to another $\ProbAlg$-algebra $(Y,\nu)$ is a Boolean algebra homomorphism\footnote{Notice the opposite direction of the arrow. We implicitly work with an opposite category here in order to keep certain functors covariant.} $\pi:Y\to X$ such that $\mu_*\pi(E)=\nu(E)$ for all $E\in Y$, where we denote by $\mu_*\pi$ the pullback probability measure on $Y$ defined by $\mu_*\pi(E)\coloneqq \mu(\pi(E))$. The $\SigmaAlg$-algebra $X$ of a  $\ProbAlg$-object automatically upgrades to a complete Boolean algebra (that is, a Boolean algebra in which arbitrary joins and meets exists) and the underlying Boolean algebra homomorphism of a $\ProbAlg$-morphism automatically upgrades to a complete Boolean algebra homomorphism (that is, one that preserves arbitrary joins and meets), see Remark \ref{rem-ccc}. Let $\Aut(X,\mu)$ denote the automorphism group of a $\ProbAlg$-algebra $(X,\mu)$ in the category $\ProbAlg$. 
    \item[(iv)] A \emph{$\OpProbAlgG$-system} is a tuple $(X,\mu,T)$ where $(X,\mu)$ is a $\ProbAlg$-algebra and $T:\Gamma \to \Aut(X,\mu)$ is a group homomorphism $\gamma \mapsto T_{\gamma}$. A \emph{$\OpProbAlgG$-morphism} from a $\OpProbAlgG$-system $(X,\mu,T)$ to another $\OpProbAlgG$-system $(Y,\nu,S)$ is a $\OpProbAlg$-morphism $\pi:(X,\mu)\to (Y,\nu)$ such that $\pi\circ T_{\gamma}=S_{\gamma}\circ \pi$ for all $\gamma\in\Gamma$. We call $\OpProbAlgG$  the category of \emph{probability algebra $\Gamma$-dynamical systems}.  
We denote a $\OpProbAlgG$-system by $\mathcal{X}=(X,\mu,T)$.  
If $\pi:\mathcal{X}\to \mathcal{Y}$ is a $\OpProbAlgG$-morphism, we call $\mathcal{X}$ a \emph{$\OpProbAlgG$-extension} of $\mathcal{Y}$ and $\mathcal{Y}$ a \emph{$\OpProbAlgG$-factor} of $\mathcal{X}$. 
We sometimes also refer to the $\OpProbAlgG$-morphism $\pi$ itself as a $\OpProbAlgG$-extension or a $\OpProbAlgG$-factor. 
\end{itemize}
 An extensive discussion of the previously introduced categories can be found in \cite[\S 6]{jt-foundational}. 
\end{definition}

\begin{remark}\label{rem-ccc}
It is well known that the $\SigmaAlg$-algebra $X$ of a $\ProbAlg$-algebra  $(X,\mu)$ is automatically a complete Boolean algebra. 
We provide the short proof for the convenience of the reader. 
Let $(E_i)_{i\in I}$ be an arbitrary family in $X$. 
Well-order $I$ by $<$ and let $F_i=E_i\wedge\overline{\bigvee_{j<i} E_j}$ for each $i\in I$. 
By construction, $(F_i)_{i\in I}$ is a family of pairwise disjoint elements of $X$. 
Since $\mu$ is a finite measure, only countably many of the $F_i$ are nonzero (this property is also called the \emph{countable chain condition}). 
Thus $\bigvee_{i\in I} E_i=\bigvee_{i\in I, F_i> 0} F_i$, and by $\sigma$-completeness of $X$, $\bigvee_{i\in I, F_i\neq 0} F_i\in X$. 
Moreover if $\pi:(Y,\nu)\to (X,\mu)$ is a $\ProbAlg$-morphism (the underlying $\SigmaAlg$-morphism of which does not necessarily preserve countable joins and meets by definition), then $\pi$ automatically upgrades  to a complete Boolean homomorphism due to the automatic injectivity of $\pi$ and the countable chain condition, cf. \cite[Remark 1.2]{jt20}.  
\end{remark}

Next we introduce some basic tools to study the structure of $\OpProbAlgG$-systems. 
We obtain these tools by relating the category $\OpProbAlgG$ to some adjacent auxiliary dynamical categories of algebras and spaces as illustrated in Figure \ref{fig:canon}. The symbol $\op$ on a category indicates the use of opposite category, while $\Gamma^\op$ for a group $\Gamma$ means the use the opposite group. 

  \begin{figure}
    \centering
    \begin{tikzcd}
 (\vonNeumannG)^\op \arrow[r,tail, two heads, "\mathtt{Inc}"] \arrow[d, tail, "\Idem", two heads, shift left=0.75ex]
& 
(\CStarAlgUnitTrace)^\op \arrow[d,tail,two heads,"\Riesz",shift left=.75ex]  \\
 \OpProbAlgG  \arrow[r,"\Stone"', tail, two heads] \arrow[u,"\Linfty",tail, two heads, shift left=0.75ex] & \CHProbG \arrow[u,"\CFunc", two heads,  tail, shift left=.75ex] 
\end{tikzcd}
    \caption{The adjacent auxiliary dynamical categories. Tailed arrows are faithful functors, while arrows with two heads are full functors. The diagram commutes up to natural isomorphisms.}
    \label{fig:canon}
\end{figure}

\begin{definition}[Adjacent dynamical categories]
First we introduce the remaining categories in Figure \ref{fig:canon} without a dynamical structure. 
Then we apply a universal procedure to turn these categories into corresponding dynamical categories.  
\begin{itemize}
    \item[(i)] A $\CHProb$-space is a tuple $(X,\Baire(X),\mu)$ where $X$ is a compact Hausdorff space, $\Baire(X)$ is the Baire $\sigma$-algebra of $X$ (that is, the smallest $\sigma$-algebra on $X$ rendering continuous functions measurable), and $\mu$ is a Baire-Radon probability measure on the measurable space $(X,\Baire(X))$. 
    A $\CHProb$-morphism $\pi:(X,\Baire(X),\mu)\to (Y,\Baire(Y),\nu)$ between compact Hausdorff probability spaces is a continuous function $\pi:X\to Y$ such that $\pi_\ast\mu=\nu$, where $\pi_\ast\mu$ denotes pushforward measure (a continuous function between compact Hausdorff spaces is always Baire-measurable since its  pullback preservers compact $G_\delta$ sets).    
    \item[(ii)] A \emph{$\CStarAlgTrace$-algebra} $(\mathcal{A},\tau)$ is a unital commutative $C^*$-algebra equipped with a state $\tau:\mathcal{A}\to \C$, that is a bounded linear functional which is non-negative (it maps non-negative elements to non-negative reals) and is of operator norm $1$.   
   A \emph{$\CStarAlgTrace$-morphism} $\Phi:(\mathcal{A},\tau_\mathcal{A})\to (\mathcal{B},\tau_{\mathcal{B}})$ between $\CStarAlgTrace$-algebras is a unital $*$-homomorphism $\Phi:\mathcal{A}\to \mathcal{B}$ such that $\tau_\mathcal{B}\circ \Phi=\tau_\mathcal{A}$. 
   \item[(iii)] A $\vonNeumann$-algebra $(\mathcal{A},\tau_\mathcal{A})$ is a commutative von Neumann algebra $\mathcal{A}$ equipped with a faithful trace $\tau_\mathcal{A}$, that is to say a $*$-linear functional $\tau_\mathcal{A}: \mathcal{A}\to \C$ with $\tau_\mathcal{A}(1) = 1$, and $\tau_\mathcal{A}(aa^*)\geq 0$
for any $a \in\mathcal{A}$,  with equality if and only if $a = 0$. A $\vonNeumann$-morphism $\Phi:(\mathcal{A},\tau_\mathcal{A})\to (\mathcal{B},\tau_\mathcal{B})$ between $\vonNeumann$-algebras is a von Neumann algebra
homomorphism $\Phi:\mathcal{A}\to \mathcal{B}$ such that $\tau_\mathcal{A}=\tau_\mathcal{B}\circ \Phi$. 
\end{itemize}
Let $\Cat$ be one of the categories $\CHProb,\CStarAlgTrace$, or $\vonNeumann$. 
For a $\Cat$-object $X$, let $\Aut_\Cat(X)$ be the automorphism group of $X$ in $\Cat$. 
We define a $\Cat_\Gamma$-object (resp. a $\Cat_{\Gamma^\op}$-object) to be a tuple $(X,T)$, where $X$ is a $\Cat$-object and $T:\Gamma\to \Aut_\Cat(X)$ is a homomorphism $\gamma\mapsto T^\gamma$ of groups (resp. $T:\Gamma^\op\to \Aut_\Cat(X)$ is a homomorphism $\gamma^\op\mapsto T^{\gamma^\op}$ of groups). A $\Cat_\Gamma$-morphism $\pi:(X,T)\to (Y,S)$ (resp. $\Cat_{\Gamma^\op}$-morphism $\pi:(Y,S)\to (X,T)$) between $\Cat_\Gamma$-objects (resp. $\Cat_{\Gamma^\op}$-objects) is a $\Cat$-morphism $\pi:X\to Y$ (resp. a $\Cat$-morphism $\pi:Y\to X$) obeying the intertwining property $\pi\circ T^\gamma=S^\gamma\circ \pi$ for all $\gamma\in\Gamma$ (resp. $\pi\circ S^{\gamma^\op}=T^{\gamma^\op}\circ \pi$ for all $\gamma^\op\in\Gamma^\op$).  
\end{definition}

\subsection{The functors}

The functors in Figure \ref{fig:canon}, in particular $\Linfty$, $\CFunc$, $\Idem$, $\Riesz$, and $\Stone$, are the means by which  we represent the objects and morphisms in the dynamical category $\OpProbAlgG$ by objects and morphisms in the adjacent dynamical categories $\CHProbG,\vonNeumannG,\CStarAlgUnitTrace$ in order to use the tools available in the latter categories to study structural properties of systems in the former, while the canonical model functor $\Stone$ is our main player. We start by describing the two Gelfand-type dualities in Figure \ref{fig:duality} which yield the dualities of the dynamical categories in Figure \ref{fig:canon} from which we then derive the canonical model functor $\Stone$ in the same figure. 

  \begin{figure}
    \centering
    \begin{tikzcd}
 (\vonNeumann)^\op \arrow[d,  tail, "\Idem", two heads, shift left=0.75ex]
& (\CStarAlgTrace)^\op \arrow[d,tail,two heads,"\Riesz",shift left=.75ex]  \\
 \OpProbAlg   \arrow[u,"\Linfty",tail, two heads, shift left=0.75ex] & \CHProb \arrow[u,"\CFunc", two heads, tail, shift left=.75ex]
\end{tikzcd}
  \caption{The Gelfand-type dualities between probability algebras and tracial commutative von Neumann algebra and between compact Hausdorff probability spaces and tracial commutative unital $C^*$-algebras.}
    \label{fig:duality}
\end{figure}
Gelfand duality (cf. Figure \ref{fig:gelfand}) establishes a well known equivalence between the category $\CH$ of compact Hausdorff spaces and the category $\CStarAlg$ of unital commutative $C^*$-algebras. The functor $\Spec$ associates to each unital commutative $C^*$-algebra $\mathcal{A}$ its spectrum $\Spec(\mathcal{A})$ (the set of $\CStarAlg$-morphisms from $\mathcal{A}$ to $\C$) which is a compact Hausdorff space by the Banach-Alaoglu theorem and to each unital $*$-homomorphism $\Phi:\mathcal{A}\to \mathcal{B}$ the continuous function $\Spec(\Phi):\Spec(\mathcal{B})\to \Spec(\mathcal{A})$, $\Spec(\Phi)(f):=f\circ \Phi$. 
The functor $\CFunc$ associates with each compact Hausdorff space $X$ the unital commutative $C^*$-algebra $C(X)$ of continuous functions $f:X\to \C$ and with each continuous function $\pi:X\to Y$ between $\CH$-spaces the Koopmann operator $C(\pi):C(Y)\to C(X)$, $C(\pi)(f):=f\circ \pi$. See \cite[\S 2]{jt-foundational} for a more detailed exposition.  

The Gelfand duality in Figure \ref{fig:gelfand} extends to a "Riesz duality" between $\CHProb$ and $\CStarAlgTrace$ as follows. 
A $\CHProb$-space $(X,\Baire(X),\mu)$ is mapped via the $\CFunc$-functor to the $\CStarAlgTrace$-algebra $(C(X),\tau_\mu)$ where the state $\tau_\mu:C(X)\to \C$ is defined by integration $\tau_\mu(f):=\int_X fd\mu$. If $\pi:(X,\Baire(X),\mu)\to (Y,\Baire(Y),\nu)$ is a $\CHProb$-morphism, then the Koopman operator $C(\pi):C(Y)\to C(X)$ satisfies $\tau_\nu=\tau_\mu\circ C(\pi)$.
Conversely, using Gelfand duality and the Riesz representation theorem we can associate to each $\CStarAlgTrace$-algebra $(\mathcal{A},\tau_\mathcal{A})$ a unique $\CHProb$-space $\Riesz(\mathcal{A},\tau_\mathcal{A})\coloneqq(\Spec(\mathcal{A}),\Baire(\Spec(\mathcal{A})),\mu_{\tau_\mathcal{A}})$. Uniqueness in the Riesz representation theorem also yields that if $\Phi:(\mathcal{A},\tau_\mathcal{A})\to (\mathcal{B},\tau_\mathcal{B})$ is a $\CStarAlgTrace$-morphism, then the continuous function $\Spec(\Phi):\Spec(\mathcal{B})\to \Spec(\mathcal{A})$ pushes forward $\nu_{\tau_\mathcal{B}}$ to $\mu_{\tau_\mathcal{A}}$. 
This "Riesz duality" between $\CHProb$ and $\CStarAlgTrace$ can be promoted in the obvious way to a duality between the corresponding dynamical categories $\CHProbG$ and $\CStarAlgUnitTrace$. This establishes the duality on the right-hand side of Figure \ref{fig:duality}. See \cite[\S 5]{jt-foundational} for a more detailed exposition.

  \begin{figure}
    \centering
    \begin{tikzcd}
(\CStarAlg)^\op \arrow[d,tail,two heads,"\Spec",shift left=.75ex]  \\
\CH \arrow[u,"\CFunc", two heads, tail, shift left=.75ex]
\end{tikzcd}
  \caption{Gelfand duality between the category of compact Hausdorff spaces and commutative unital $C^*$-algebras.}
    \label{fig:gelfand}
\end{figure}

As for the duality on the left-hand side of Figure \ref{fig:duality}, let $(X,\mu)$ be a $\OpProbAlg$-space. 
We define $\Linfty(X)$ to be the set of $\AbsMes$-morphisms $f:X\to \Borel(\C)$, where $\Borel(\C)$ denotes the Borel $\sigma$-algebra of the complex numbers $\C$, which are bounded in the sense that there exists a real number $M\geq 0$ such that  $f( \{ z \in \C: |z| \leq M \} ) = 1$, with $\|f\|_{\Linfty(X)}$ defined to equal the infimum of all such $M$. We can equip $\Linfty(X)$ with the structure of a commutative $*$-algebra by lifting all $*$-algebra operations from $\C$ to the set of all $\AbsMes$-morphisms $f:X\to \Borel(\C)$. Every element $E$ of $X$ generates an idempotent element $1_E$ of $\Linfty(X)$. The set of finite linear combinations of idempotents form a dense subspace of $\Linfty(X)$.  One can then define a trace $\tau$ on this algebra by defining
$\tau ( \sum_{n=1}^N c_n 1_{E_n} ) \coloneqq \sum_{n=1}^N c_n \mu(E_n)$
for any finite sequence of complex numbers $c_n$ and $E_n \in X$, and then extending by density.  
Thus $\Linfty(X)$ can be viewed as an element of $\vonNeumann$.  If $\pi \colon X \to Y$ is a $\OpProbAlg$-morphism, one can define the $\vonNeumann$-morphism $\Linfty(\pi) \colon \Linfty(Y) \to \Linfty(X)$ by the Koopman operator
$\Linfty(\pi)(f) \coloneqq f \circ \pi$.  See \cite[\S 6, 7]{jt-foundational} for details of these constructions. 

Conversely, suppose that $({\mathcal A},\tau_{\mathcal A})$ is a $\vonNeumann$-algebra.  We can form the collection ${\mathcal P}_{\mathcal A}$ of real projections in ${\mathcal A}$.  As is well-known, these projections have the structure of a complete Boolean algebra.  The trace $\tau_{\mathcal A}$ then becomes a countably additive probability measure on ${\mathcal P}_{\mathcal A}$, and we write $\Idem({\mathcal A},\tau_{\mathcal A})$ for the probability algebra of $( {\mathcal P}_{\mathcal A}, \tau_{\mathcal A})$.
If $\Phi \colon ({\mathcal A},\tau_{\mathcal A}) \to ({\mathcal B},\tau_{\mathcal B})$ is a $\vonNeumann$-morphism, we observe that the associated von Neumann algebra homomorphism $\Phi \colon {\mathcal A} \to {\mathcal B}$ maps projections in ${\mathcal P}_{\mathcal A}$ to projections in ${\mathcal P}_{\mathcal B}$, in a manner that preserves the trace as well as being a $\SigmaAlg$-morphism.  We then define $\Idem(\Phi) \colon \Idem({\mathcal B},\tau_{\mathcal B}) \to \Idem({\mathcal A},\tau_{\mathcal A})$ to be the $\OpProbAlg$-morphism associated with this $\SigmaAlg$-morphism.  
See \cite[\S 7]{jt-foundational} for a proof that $\Idem$ and $\Linfty$ establish a duality of categories between $\vonNeumann$ and $\OpProbAlg$.  This duality then extends naturally to the dynamical categories $\vonNeumannG$ and $\OpProbAlgG$.  

Every von Neumann algebra is also a unital $C^*$-algebra, and a faithful trace on a commutative von Neumann algebra becomes a state on the corresponding $C^*$-algebra. From this it is easy to see that there is a forgetful inclusion functor $\mathtt{Inc}$ from $\vonNeumannG$ to $\CStarAlgUnitTrace$.

Finally, we define the \emph{canonical model functor} by
\[
\Stone:=\Riesz\circ \mathtt{Inc}\circ \Linfty.  
\]

\begin{remark}\label{rem-stone}
There is a more direct way to define the canonical model functor using the Stone and Loomis-Sikorski representation theorems where the underlying compact Hausdorff space is the Stonean space associated to a complete Boolean algebra by Stone's representation theorem, e.g., see \cite[\S 9]{jt-foundational}. 
\end{remark}

\begin{remark}\label{rem-conc}
The canonical model has been implicitly introduced in the literature at several occasions, e.g., see \cite{fremlinvol3,segal,DNP,ellis,EFHN,kakutani-l,dieudonne-counter,halmos-dieudonne} where such models are also referred to as ``Stone'' or ``Kakutani models''. 

For a countable group $\Gamma$, let us denote by $\mathbf{PrbAlg}^\sigma_\Gamma$ the subcategory of $\OpProbAlgG$ whose systems consist of separable probability algebras.  
In \cite[\S 2]{glasner2015ergodic}, a ''Cantor model'' is associated to each $\mathbf{PrbAlg}^\sigma_\Gamma$-system which satisfies some functoriality properties, (such Cantor models are formed on closed subspaces of the classical Cantor set). These Cantor models are not canonical, but amenable to classical disintegration of measures. Furstenberg \cite[\S 5-6]{furstenberg2014recurrence} works with ``separable systems'' which are concrete probability spaces on which a countable group acts measure-preservingly (modulo null sets) such that the associated $\OpProbAlgG$-systems (with the help of the $\mathtt{AlgAbs}$-functor which is described below) are in the subcategory $\mathbf{PrbAlg}^\sigma_\Gamma$. 
\end{remark}

The categories $\OpProbAlgG$ and $\CHProbG$ are not dual to each other (similarly, as the categories $\vonNeumannG$ and $\CStarAlgUnitTrace$ are not so). However we have a functor $\mathtt{AlgAbs}$ (cf. Figure \ref{fig:algabs}) that associates with every $\CHProbG$-system a $\OpProbAlgG$-system such that $\mathtt{AlgAbs}\circ \Stone$ is naturally isomorphic to the identity functor on $\OpProbAlgG$. 

  \begin{figure}
    \centering
    \begin{tikzcd}
 \OpProbAlgG   & \CHProbG \arrow[l,two heads, "\mathtt{AlgAbs}"'] 
\end{tikzcd}
    \caption{The $\mathtt{AlgAbs}$ functor first abstracts away from the set structure of a $\CHProbG$-system and then  deletes the null ideal of this abstract system to obtain a $\OpProbAlgG$-system.}
    \label{fig:algabs}
\end{figure}

Next we define the functor $\mathtt{AlgAbs}$. 
Let $(X,\Baire(X),\mu)$ be a $\CHProb$-space.
Denote by $\mathcal{N}_\mu$ the $\mu$-null ideal of $\Baire(X)$, that is the collection of $E\in\Baire(X)$ such that $\mu(E)=0$.  
We can form the quotient Boolean $\sigma$-algebra $X_\mu=\Baire(X)/\mathcal{N}_\mu$ by identifying $E,F\in\Baire(X)$ if $E\Delta F\in \mathcal{N}_\mu$. 
Let $\pi_X:\Baire(X)\to X_\mu$ be the canonical $\SigmaAlg$-epimorphism. 
We define a probability measure $\hat\mu:X_\mu\to [0,1]$ by $\hat\mu(\pi(E))\coloneqq\mu(E)$. 
We obtain a $\ProbAlg$-algebra $(X_\mu, \hat\mu)$. 
If $f:(X,\Baire(X),\mu)\to (Y,\Baire(Y),\nu)$ is a $\CHProb$-morphism, then 
the pullback map $f^*:\Baire(Y)\to \Baire(X)$ defined by $E\mapsto f^*(E)\coloneqq f^{-1}(E)$ is a $\SigmaAlg$-morphism.
We define the $\OpProbAlg$-morphism $\mathtt{AlgAbs}(f): (X_\mu, \hat\mu)\to (Y_\nu, \hat\nu)$ by $\pi_Y(E)\mapsto \mathtt{AlgAbs}(f)(\pi_Y(E))\coloneqq \pi_X(f^*(E))$.  
The functor $\mathtt{AlgAbs}$ promotes to a functor from $\CHProbG$ to $\OpProbAlgG$.  
The functor $\mathtt{AlgAbs}$ is not injective on objects since a $\CHProb$-space and its measure-theoretic completion are mapped to the same $\OpProbAlg$-space. 
Moreover,  $\Stone(\mathtt{AlgAbs}(X,\Baire(X),\mu))$ is typically much larger than $(X,\Baire(X),\mu)$.

\subsection{Abstract Lebesgue spaces}

The following notations will be used throughout the remainder of this paper. 

Let $\mathcal{X}=(X,\mu,T)$ be a $\OpProbAlgG$-system. 
We denote the canonical model $\Stone(\mathcal{X})$ by $\tilde{\mathcal{X}}$, its underlying Stonean space by $\tilde{X}$ (cf. Remark \ref{rem-stone}), its Baire-Radon measure  by $\tilde{\mu}$, and its $\Gamma$-action by $\tilde{T}$, so that $$\Stone(\mathcal{X})=\tilde{\mathcal{X}}=(\tilde{X},\Baire(\tilde{X}), \tilde{\mu},\tilde{T}).$$  
If $\pi:\mathcal{X}\to \mathcal{Y}$ is a $\OpProbAlgG$-factor, then we denote its canonical representation $\Stone(\pi)$ by $\tilde{\pi}$. 

We can use the canonical model to define integration and Lebesgue spaces for probability algebras\footnote{In \cite[\S 363-366]{fremlinvol3}, a direct construction for integration and Lebesgue spaces for measure algebras is provided which is equivalent to our construction via canonical models.}. 
For any $0\leq p\leq \infty$, we define the complex vector space $$L^p(X)=L^p(X,\mu)\coloneqq L^p(\tilde{X},\Baire(\tilde{X}), \tilde{\mu}).$$ 
For any $f\in L^1(X)$, we define $$\int_X f d\mu\coloneqq \int_{\tilde{X}} f d\tilde{\mu}.$$ 
Let $\pi:\mathcal{X}\to \mathcal{Y}$ be a $\OpProbAlgG$-factor with canonical representation $\tilde{\pi}$. 
Then $\tilde{\pi}$ gives rise to a Koopman operator $\tilde{\pi}^*:L^p(Y)\to L^p(X)$ defined by pullback as $$\tilde{\pi}^*f\coloneqq f\circ \tilde{\pi}.$$ 
In particular, we can use the Koopman operator to identify $L^p(Y)$ with a (closed) subspace of $L^p(X)$.  
Henceforth, we will make use of these identifications without further notice.

\subsection{Canonical disintegration and relatively independent products}\label{sec-canonical} 

A main advantage of the canonical model is that it yields a canonical disintegration of measures from which in turn several key basic measure-theoretic concepts can be defined. 
Canonical disintegration of measures relies on an important property which we coined the \emph{strong Lusin property} in \cite{jt-foundational} (a property well known in the literature, though under different guise). It implies that we have a $\CStarAlgUnit$-isomorphism 
\begin{equation}\label{eq-lusin}
L^\infty(X)\equiv C(\tilde{X})
\end{equation}
for any $\ProbAlg$-algebra $(X,\mu)$ with Stone space $\tilde{X}$.  
The strong Lusin property implies the following disintegration result. 
\begin{theorem}[Canonical disintegration]\label{thm-disintegration} \cite[\S 8]{jt-foundational}
Let $\mathcal{X}=(X,\mu,T)$ and $\mathcal{Y}=(Y,\nu,S)$ be $\OpProbAlgG$-systems and $\pi \colon \mathcal{X}\to \mathcal{Y}$ be a $\OpProbAlgG$-factor.  
Then there is a unique Radon probability measure $\mu_y$ on $\tilde{X}$ for each $y \in \tilde{Y}$ which depends continuously on $y$ in the vague topology in the sense that $y \mapsto \int_{\tilde{X}} f\ d\mu_y$ is continuous for every $f\in C(\tilde{X})$, and such that
\begin{equation}\label{disint-form}
\int_{\tilde X} f \tilde{\pi}^*g\ d\tilde\mu = \int_{\tilde{Y}} \left(\int_{\tilde{X}} f\ d\mu_y\right) g\ d\tilde{\nu}\end{equation}
for all $f \in C(\tilde{X})$, $g \in C(\tilde{Y})$.  
Furthermore, for each $y \in \tilde{Y}$, $\mu_y$ is supported on the compact set $\tilde{\pi}^{-1}(\{y\})$, in the sense that $\mu_y(E)=0$ whenever $E$ is a measurable set disjoint from $\tilde{\pi}^{-1}(\{y\})$ (this conclusion does \emph{not} require the fibers $\tilde{\pi}^{-1}(\{y\})$ to be measurable). 
Moreover, we have 
\begin{equation}\label{eq-intertwining}
\mu_{\tilde{S}^\gamma(y)}=\tilde{T}^\gamma_*\mu_y
\end{equation}
for all $y\in \tilde{Y}$ and $\gamma\in\Gamma$. 
\end{theorem}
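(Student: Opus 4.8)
The plan is to exploit the strong Lusin identification \eqref{eq-lusin}, which lets us treat $L^\infty(X)=C(\tilde X)$ and $L^\infty(Y)=C(\tilde Y)$ as genuine algebras of continuous functions on the Stonean spaces, together with the classical Riesz representation theorem. First I would use the Koopman embedding $\tilde\pi^\ast\colon C(\tilde Y)\hookrightarrow C(\tilde X)$ and the tracial functional $f\mapsto \int_{\tilde X} f\,d\tilde\mu$ to build, for each fixed $y\in\tilde Y$, a positive linear functional $\Lambda_y$ on $C(\tilde X)$. The natural way to do this is to observe that $C(\tilde X)$ is a module over $C(\tilde Y)$ via $\tilde\pi^\ast$, and to construct a conditional expectation $\E(\cdot\mid Y)\colon C(\tilde X)\to C(\tilde Y)$ (equivalently, the $L^\infty$-version of the orthogonal projection $L^2(X)\to L^2(Y)$, which restricts to a positive unital $C(\tilde Y)$-linear map on the dense $\ast$-subalgebra of $L^\infty$-functions and extends by the strong Lusin property to all of $C(\tilde X)$). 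Evaluating $\E(f\mid Y)$ at the point $y\in\tilde Y$ gives $\Lambda_y(f)\coloneqq \E(f\mid Y)(y)$, a positive, unital, bounded linear functional on $C(\tilde X)$; by Riesz representation there is a unique Radon probability measure $\mu_y$ on $\tilde X$ with $\int_{\tilde X} f\,d\mu_y=\Lambda_y(f)$.

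The continuity of $y\mapsto\int_{\tilde X}f\,d\mu_y$ in the vague topology is then immediate, since by construction this map equals $\E(f\mid Y)\in C(\tilde Y)$, which is continuous. The disintegration identity \eqref{disint-form} follows by unwinding the defining property of conditional expectation: for $f\in C(\tilde X)$ and $g\in C(\tilde Y)$,
\[
\int_{\tilde X} f\,\tilde\pi^\ast g\;d\tilde\mu=\int_{\tilde Y}\E(f\mid Y)\,g\;d\tilde\nu=\int_{\tilde Y}\Bigl(\int_{\tilde X} f\,d\mu_y\Bigr) g\;d\tilde\nu,
\]
using that $\tilde\mu$ pushes forward to $\tilde\nu$ under $\tilde\pi$ and the module property of $\E(\cdot\mid Y)$. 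Uniqueness of the family $(\mu_y)_{y\in\tilde Y}$ follows from uniqueness in Riesz representation once one knows that a vaguely continuous family is determined by \eqref{disint-form}: if $(\mu'_y)$ is another such family, then for every $f$ the continuous functions $y\mapsto\int f\,d\mu_y$ and $y\mapsto\int f\,d\mu'_y$ have the same integral against every $g\in C(\tilde Y)$, hence agree $\tilde\nu$-a.e., hence everywhere by continuity and the fact that $\tilde\nu$ has full support on the Stonean space $\tilde Y$.

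For the support statement, I would argue that if $E$ is a measurable subset of $\tilde X$ disjoint from the fiber $\tilde\pi^{-1}(\{y\})$, then one can find (using that $\tilde X,\tilde Y$ are compact Hausdorff and $\tilde\pi$ continuous, so $\tilde\pi^{-1}(\{y\})$ is compact, and using normality of $\tilde X$ together with Baire regularity of Radon measures) a continuous function $0\le f\le 1$ on $\tilde X$ that is $1$ on a neighborhood of $\tilde\pi^{-1}(\{y\})$, vanishes on a set containing $E$ up to $\mu_y$-null error, and whose conditional expectation $\E(1-f\mid Y)$ vanishes at $y$; chasing $\mu_y(E)\le\int(1-f)\,d\mu_y=\E(1-f\mid Y)(y)=0$ gives the claim. (A cleaner route: show directly that for every $g\in C(\tilde Y)$ with $g(y)=0$ one has $\tilde\pi^\ast g=0$ in $C(\tilde X)$ "near $y$", and approximate the indicator of a neighborhood of the fiber; the Stonean/extremally disconnected structure makes the relevant clopen approximations available.) Finally, the intertwining relation \eqref{eq-intertwining} follows because $\tilde T^\gamma$ intertwines $\tilde\pi$ with $\tilde S^\gamma$ and preserves $\tilde\mu$, so the family $y\mapsto \tilde T^\gamma_\ast\mu_{(\tilde S^\gamma)^{-1}(y)}$ also satisfies \eqref{disint-form}; by the uniqueness just established it coincides with $y\mapsto\mu_y$, which is \eqref{eq-intertwining} after reindexing.

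I expect the main obstacle to be the support statement, precisely because the fibers $\tilde\pi^{-1}(\{y\})$ need not be Baire-measurable: one cannot simply say "$\mu_y$ is the restriction of something to the fiber", and one must instead phrase support purely in terms of measurable sets disjoint from the fiber and exploit the compactness of the fiber plus the outer regularity of Radon measures by continuous functions. Everything else is a fairly mechanical consequence of the strong Lusin property \eqref{eq-lusin}, the construction of conditional expectation as the $L^\infty$-shadow of the $L^2$-projection, and Riesz representation.
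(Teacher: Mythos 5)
Your construction --- realize the conditional expectation as a positive, unital, $C(\tilde Y)$-linear map $C(\tilde X)\to C(\tilde Y)$ via the strong Lusin property \eqref{eq-lusin}, apply the Riesz representation theorem to the functional $f\mapsto \E(f|Y)(y)$ for each $y\in\tilde Y$, and then derive vague continuity, the identity \eqref{disint-form}, uniqueness (using that $\tilde\nu$ has full support on the Stonean space $\tilde Y$, since every nonempty clopen set corresponds to a nonzero element of the probability algebra), and \eqref{eq-intertwining} from that --- is correct and is essentially the argument of the cited reference \cite[\S 8]{jt-foundational}; the present paper only quotes the result. The one sub-claim your support argument leaves implicit is that $\E(h|Y)(y)=0$ for continuous $h\ge 0$ vanishing on $\tilde\pi^{-1}(\{y\})$: this follows because $\tilde\pi$ is a closed map between compact Hausdorff spaces, so the open set $\{h<\eps\}\supset\tilde\pi^{-1}(\{y\})$ contains $\tilde\pi^{-1}(V)$ for some clopen neighborhood $V$ of $y$, whence $\E(h|Y)\le \eps+\|h\|_{L^\infty}1_{V^c}$ by the module property and evaluation at $y$ gives the claim; combining this with inner regularity of the Radon measure $\mu_y$ (to replace the merely measurable set $E$ by a compact $K\subset E$ before invoking Urysohn) completes your sketch of the support statement.
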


We can apply the canonical disintegration to construct relatively independent products in the dynamical category $\OpProbAlgG$ (cf. \cite[\S 2]{ellis}). 

\begin{theorem}[Relative products in $\OpProbAlgG$]\label{rel-prod}  \cite[\S 8]{jt-foundational} Let $\mathcal{X}_1=(X_1,\mu_1,T_1)$, $\mathcal{X}_2=(X_2,\mu_2,T_2)$ and $\mathcal{Y}=(Y,\nu,S)$ be $\OpProbAlgG$-systems. 
Suppose that one has $\OpProbAlgG$-factors $\pi_1 \colon \mathcal{X}_1 \to \mathcal{Y}$, $\pi_2 \colon \mathcal{X}_2 \to \mathcal{Y}$.  Then there exists a $\OpProbAlgG$-commutative diagram
\begin{center}
\begin{tikzcd}
    & \mathcal{X}_1 \times_\mathcal{Y} \mathcal{X}_2 \arrow[dl, "\Pi_1"'] \arrow[dr,"\Pi_2"] \\ \mathcal{X}_1 \arrow[dr,"\pi_1"'] && \mathcal{X}_2 \arrow[dl,"\pi_2"] \\
    & \mathcal{Y}
\end{tikzcd}
\end{center}
for some $\OpProbAlgG$-system $\mathcal{X}_1 \times_\mathcal{Y} \mathcal{X}_2=(X_1\times_Y X_2,\mu\times_Y\mu, T\times T)$ and $\OpProbAlgG$-morphisms $\Pi_1 \colon \mathcal{X}_1 \times_\mathcal{Y} \mathcal{X}_2 \to \mathcal{X}_1$, $\Pi_2 \colon \mathcal{X}_1 \times_\mathcal{Y} \mathcal{X}_2 \to \mathcal{X}_2$ such that one has
\begin{equation}\label{f1f2}
 \int_{X_1 \times_Y X_2} f_1 f_2 d\mu\times_Y\mu= \int_Y \E(f_1|Y) \E(f_2|Y) d\nu 
\end{equation}
for all $f_1 \in \Linfty(X_1)$, $f_2 \in \Linfty(X_2)$, where we embed $\Linfty(Y)$ into $\Linfty(X_1), \Linfty(X_2)$, and embed these algebras in turn into $\Linfty(X_1 \times_Y X_2)$. Furthermore, the $\SigmaAlg$-algebra of $X_1\times_Y X_2$ is generated by the $\SigmaAlg$-algebras $X_1$, $X_2$ (where we identify the latter with $\sigma$-complete subalgebras of the former in the obvious way).
\end{theorem}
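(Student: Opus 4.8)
The plan is to construct the relative product directly on the canonical models using the canonical disintegration (Theorem \ref{thm-disintegration}), and then pull the resulting object back to $\OpProbAlgG$ via the $\mathtt{AlgAbs}$ functor. First I would pass to the canonical models $\tilde{\mathcal{X}}_1 = (\tilde X_1, \Baire(\tilde X_1), \tilde\mu_1, \tilde T_1)$, $\tilde{\mathcal{X}}_2$, and $\tilde{\mathcal{Y}}$, together with the canonical representations $\tilde\pi_1, \tilde\pi_2$ of the given factor maps. On the compact Hausdorff space $\tilde X_1 \times \tilde X_2$ I would define a Baire--Radon probability measure $\theta$ by prescribing, for $f_1 \in C(\tilde X_1)$ and $f_2 \in C(\tilde X_2)$,
\begin{equation*}
\int_{\tilde X_1 \times \tilde X_2} f_1 \otimes f_2 \, d\theta \coloneqq \int_{\tilde Y} \left( \int_{\tilde X_1} f_1 \, d\mu_{1,y} \right)\left( \int_{\tilde X_2} f_2 \, d\mu_{2,y} \right) d\tilde\nu(y),
\end{equation*}
where $\mu_{1,y}, \mu_{2,y}$ are the canonical disintegrations of $\tilde\mu_1, \tilde\mu_2$ over $\tilde{\mathcal{Y}}$. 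The continuity of $y \mapsto \int f_i \, d\mu_{i,y}$ from Theorem \ref{thm-disintegration} makes the integrand continuous, so this is a well-defined positive linear functional on the dense subalgebra of $C(\tilde X_1 \times \tilde X_2)$ spanned by elementary tensors (dense by Stone--Weierstrass); it extends to a state, hence by the Riesz representation theorem to a Baire--Radon probability measure $\theta$. This defines a $\CHProb$-space $(\tilde X_1 \times \tilde X_2, \Baire(\tilde X_1 \times \tilde X_2), \theta)$.

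Next I would install the dynamics: the product homeomorphism $\tilde T_1^\gamma \times \tilde T_2^\gamma$ preserves $\theta$ because the intertwining identity \eqref{eq-intertwining}, i.e. $\mu_{i, \tilde S^\gamma(y)} = (\tilde T_i^\gamma)_* \mu_{i,y}$, lets one change variables $y \mapsto \tilde S^\gamma(y)$ in the defining integral and use $\tilde S^\gamma$-invariance of $\tilde\nu$. The coordinate projections $\tilde X_1 \times \tilde X_2 \to \tilde X_i$ are continuous, equivariant, and push $\theta$ forward to $\tilde\mu_i$ (set $f_{3-i} = 1$ in the defining formula and use that $\int_{\tilde X_{3-i}} 1 \, d\mu_{3-i,y} = 1$ together with \eqref{disint-form}), and they agree after composing down to $\tilde Y$; hence they are $\CHProbG$-morphisms forming the required commuting square at the topological level. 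Now apply $\mathtt{AlgAbs}$: set $\mathcal{X}_1 \times_\mathcal{Y} \mathcal{X}_2 \coloneqq \mathtt{AlgAbs}(\tilde X_1 \times \tilde X_2, \Baire(\tilde X_1\times\tilde X_2), \theta, \tilde T_1 \times \tilde T_2)$, with $\Pi_1, \Pi_2$ the images of the coordinate projections. Functoriality of $\mathtt{AlgAbs}$ turns the topological commuting square into a $\OpProbAlgG$-commuting square, and since $\mathtt{AlgAbs} \circ \Stone \cong \mathrm{id}$, the maps $\Pi_i$ really do land over $\mathcal{X}_i$ and restrict the $\Gamma$-action correctly.

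It remains to verify formula \eqref{f1f2} and the generation statement. For \eqref{f1f2}: both sides are bounded and continuous (in $L^2$) in each argument separately, and the set of $f_i$ for which it holds is closed and contains $C(\tilde X_i) \cong \Linfty(X_i)$ (by the strong Lusin identification \eqref{eq-lusin}), where it is literally the defining formula for $\theta$ once one recognizes $\int_{\tilde X_i} f_i \, d\mu_{i,y}$ as a version of $\E(f_i \mid Y)$ evaluated at $y$ — this last identification follows by testing against $g \in C(\tilde Y)$ and comparing \eqref{disint-form} with the defining property of conditional expectation. A density/monotone-class argument over $\Linfty$ then closes it. The generation statement follows because the Baire $\sigma$-algebra $\Baire(\tilde X_1 \times \tilde X_2)$ is generated by the coordinate preimages of $\Baire(\tilde X_i)$ (continuous functions on a product of compact Hausdorff spaces are uniformly approximable by sums of tensors, so the product Baire $\sigma$-algebra is the tensor product $\sigma$-algebra), and this passes through the quotient by the null ideal; here one uses the countable chain condition (Remark \ref{rem-ccc}) to guarantee that the $\sigma$-subalgebra generated by the two images is automatically complete. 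I expect the main obstacle to be precisely the bridge between the disintegration integrals $\int_{\tilde X_i} f_i \, d\mu_{i,y}$ and the conditional expectation operators $\E(\cdot \mid Y)$ appearing in \eqref{f1f2}: one must check carefully that the canonical disintegration computes conditional expectations (as elements of $\Linfty(Y) \cong C(\tilde Y)$, hence genuinely, not merely $\tilde\nu$-a.e.), and that the pointfree embeddings $\Linfty(Y) \hookrightarrow \Linfty(X_i) \hookrightarrow \Linfty(X_1 \times_\mathcal{Y} \mathcal{X}_2)$ used in the statement are compatible with the Koopman operators of $\Pi_1, \Pi_2$ on the canonical-model side.
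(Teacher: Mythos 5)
The paper does not prove this theorem itself but quotes it from \cite[\S 8]{jt-foundational}; your construction --- defining a Baire--Radon measure $\theta$ on $\tilde X_1\times\tilde X_2$ by integrating the product of the canonical disintegrations against $\tilde\nu$ and then applying $\mathtt{AlgAbs}$ --- is exactly the construction of that reference, and it is essentially correct. One point to tighten: the two compositions $\tilde\pi_1\circ p_1$ and $\tilde\pi_2\circ p_2$ of the coordinate projections $p_i$ down to $\tilde Y$ are \emph{not} equal as continuous maps on the full topological product, so the square commutes only after applying $\mathtt{AlgAbs}$ (equivalently, $\theta$-almost everywhere); this is precisely where you must invoke the assertion in Theorem \ref{thm-disintegration} that $\mu_{i,y}$ is supported on the fiber $\tilde\pi_i^{-1}(\{y\})$, e.g.\ by verifying $\int \left|g\circ\tilde\pi_1\circ p_1-g\circ\tilde\pi_2\circ p_2\right|^2\,d\theta=0$ for all $g\in C(\tilde Y)$. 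Similarly, well-definedness and positivity of your functional on the span of elementary tensors are cleanest if you observe that there it coincides with $F\mapsto\int_{\tilde Y}\left(\int F\,d(\mu_{1,y}\times\mu_{2,y})\right)d\tilde\nu(y)$, which is manifestly positive, unital, and of norm one, hence extends by Stone--Weierstrass density to a state on $C(\tilde X_1\times\tilde X_2)$.
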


\subsection{The invariant factor functor and the Alaoglu-Birkhoff ergodic theorem}\label{sec-ergodic-thm}

Since we assume no structure on $\Gamma$ other than it being a group, the only ergodic theorem available to us is the Alaoglu-Birkhoff ergodic theorem. 
We first introduce the functor $\Inv$ from $\OpProbAlgG$ into itself:

\begin{definition}[Invariant factor functor]\label{inv-def}  
\text{}
\begin{itemize}
    \item[(i)]  If $\mathcal{X} = (X,\mu,T)$ is a $\OpProbAlgG$-system, we define $$\Inv(\mathcal{X})=(\Inv(X),\mu_{\Inv(X)},T_{\Inv(X)})$$ to be the $\OpProbAlgG$-system with $\AbsMes$-space 
    $$\Inv(X)\coloneqq \{ E \in X: T^\gamma(E) = E \text{ for all } \gamma \in \Gamma \},$$
    measure
    $$ \mu_{\Inv(X)}(E) := \mu(E)$$
    for all $E \in X$, and action defined by
    $$ T_{\Inv(X)}^\gamma(E) \coloneqq T^\gamma(E)$$
    for all $E \in \Inv(X)$ and $\gamma \in \Gamma$.  
    \item[(ii)]  If $f \colon \mathcal{X} \to \mathcal{Y}$ is a $\OpProbAlgG$-factor, we define $\Inv(f) \colon \Inv(\mathcal{X}) \to \Inv(\mathcal{Y})$ by 
    $$\Inv(f)(E) := f(E)$$
    whenever $E \in \Inv(Y)$.
    \item[(iii)] A $\OpProbAlgG$-system $\mathcal{X}$ is said to be \emph{ergodic} if $\Inv(X)$ is the trivial algebra $\{0,1\}$. 
\end{itemize}
\end{definition}

There is a natural epimorphism from the identity functor on $\OpProbAlgG$ to $\Inv$ that gives a $\OpProbAlgG$-factor $\pi \colon \mathcal{X} \to \Inv(\mathcal{X})$ defined by inclusion.  Using this factor, one can view $L^2(\Inv(X))$ as a subspace of $L^2(X)$.  Each shift $T^\gamma \colon X \to X$ induces a unitary Koopman operator $(T^\gamma)^* \colon L^2(X) \to L^2(X)$. For any $f\in L^0(X)$, we denote by $\orb(f)$ the orbit $\{(T^\gamma)^* f \colon \gamma \in \Gamma \}$ of $f$. 

\begin{theorem}[Alaoglu-Birkhoff abstract ergodic theorem]\label{ab-thm} \cite{ab40abstract} Let $\mathcal{X} = (X,\mu, T)$ be a $\OpProbAlgG$-system and $\E(\cdot|\Inv(X)) \colon L^2(X) \to L^2(\Inv(X))$ be the orthogonal projection.  Then for any $f \in L^2(X)$, $\E(f|\Inv(X))$ is the unique element of minimal norm in the closed convex hull of $\orb(f)$.
\end{theorem}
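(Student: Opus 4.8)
The plan is to establish the Alaoglu--Birkhoff theorem as a special case of the general mean ergodic theorem for a group of affine isometries, adapted to our pointfree setting through the canonical model. First I would fix the Hilbert space $H = L^2(X)$ with its inner product inherited from $L^2(\tilde X,\Baire(\tilde X),\tilde\mu)$, and observe that each $(T^\gamma)^*$ is a unitary operator on $H$ (measure preservation of $\tilde T^\gamma$ gives this at once from the disintegration-free fact that $\Stone$ sends automorphisms to measure-preserving homeomorphisms). Thus $\{(T^\gamma)^*:\gamma\in\Gamma\}$ is a group of linear isometries of $H$, and the closed convex hull $K$ of $\orb(f)$ is a bounded, closed, convex, $\Gamma$-invariant subset of $H$. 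Since $H$ is a Hilbert space, $K$ is weakly compact and strictly convex in the sense that it has a unique element $g$ of minimal norm; the burden is to identify $g$ with $\E(f|\Inv(X))$.

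The core of the argument is the Alaoglu--Birkhoff fixed point principle: a group of isometries of a Hilbert space acting on a bounded closed convex set has the unique minimal-norm point as a common fixed point. Concretely, I would argue that for any $\gamma$, $(T^\gamma)^* g$ also lies in $K$ (invariance of $K$) and has the same norm as $g$ (isometry), hence by uniqueness of the minimizer $(T^\gamma)^* g = g$; therefore $g$ is $\Gamma$-invariant. Next I would show $g \in L^2(\Inv(X))$, i.e. that a $\Gamma$-invariant element of $L^2(X)$ is measurable with respect to the invariant subalgebra $\Inv(X)$. This is where the pointfree formalism does real work: I would reduce to showing that the level sets (equivalently, the spectral projections) of a $\Gamma$-invariant element of $L^\infty(X)$ lie in $\Inv(X)$, using the $\CStarAlgUnit$-isomorphism $L^\infty(X)\equiv C(\tilde X)$ from \eqref{eq-lusin} together with the fact that $\tilde T^\gamma$ permutes the clopen sets of the Stonean space $\tilde X$; a standard truncation/approximation then passes from $L^\infty$ to $L^2$. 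Conversely, every element of $L^2(\Inv(X))$ is fixed by all $(T^\gamma)^*$, so $L^2(\Inv(X))$ is exactly the fixed subspace.

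It remains to see that the minimal-norm point $g$ of $K$ equals the orthogonal projection $\E(f|\Inv(X))$. Writing $f = \E(f|\Inv(X)) + h$ with $h \perp L^2(\Inv(X))$, each $(T^\gamma)^* f = \E(f|\Inv(X)) + (T^\gamma)^* h$ since the conditional expectation part is invariant, so every element of $\orb(f)$, hence every element of $K$, has the form $\E(f|\Inv(X)) + h'$ with $h' \perp L^2(\Inv(X))$ (the orthogonal complement is closed and convex, so it absorbs convex combinations and limits). By the Pythagorean identity $\|\E(f|\Inv(X)) + h'\|^2 = \|\E(f|\Inv(X))\|^2 + \|h'\|^2$, the norm on $K$ is minimized exactly when $h' = 0$; but $\E(f|\Inv(X))$ is itself a fixed point of all $(T^\gamma)^*$, so it lies in the closure of the convex hull... more carefully, one shows $\E(f|\Inv(X)) \in K$ by noting it is the weak limit of averages of $\orb(f)$, or simply that the unique minimizer $g$ must have $h' = 0$ and hence $g = \E(f|\Inv(X))$ directly. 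The main obstacle I anticipate is the second step: proving cleanly that the fixed subspace of the Koopman representation on $L^2(X)$ coincides with $L^2(\Inv(X))$ in the abstract probability-algebra setting, since one cannot argue pointwise with sets of points and must instead route everything through the Stonean canonical model and the strong Lusin property. Everything else is the classical Alaoglu--Birkhoff argument, which transfers verbatim once $H$, the isometry group, and $K$ are in place.
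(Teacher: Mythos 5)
The paper offers no proof of this statement---it is quoted as the classical Alaoglu--Birkhoff theorem with a citation to the original 1940 paper---so your argument is exactly the standard one that the citation points to, and it is correct: the level-set argument identifying the fixed subspace of the Koopman representation with $L^2(\Inv(X))$ goes through in the pointfree setting just as you describe, since invariance of $g$ under $(T^\gamma)^*$ forces its pullbacks of Borel sets to land in $\Inv(X)$. The only place worth tightening is the concluding step: since the unique minimizer $g$ is fixed by every $(T^\gamma)^*$ and hence lies in $L^2(\Inv(X))$, while also $g=\E(f|\Inv(X))+h'$ with $h'\perp L^2(\Inv(X))$, applying $\E(\cdot|\Inv(X))$ to this identity yields $g=\E(f|\Inv(X))$ immediately, so there is no need to decide whether $\E(f|\Inv(X))$ itself belongs to the closed convex hull of $\orb(f)$ (it then does, a posteriori).
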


\section{Conditional Hilbert spaces and Hilbert-Schmidt operators}\label{sec-condanal}

In this section, we develop some conditional analysis which is used in Section \ref{sec-compact} to study relatively compact extensions. 
Specifically, we introduce a conditional Hilbert space and its conditional tensor product and establish some useful properties of them. 
We do not aim to give the most general and systematic treatment of conditional Hilbert space theory, 
but present, in a self-contained manner, the minimal amount of that theory needed, directed towards its application to the study of extensions of $\OpProbAlgG$-systems. 
In several remarks, we discuss and provide references how to interpret the results in Boolean sheaf topoi, which are addressed to the reader with familiarity to topos theory and categorical logic.  
The ergodic-theoretically inclined reader, can view the theory of conditional Hilbert spaces as a natural extension of the theory of measurable Hilbert bundles (e.g., see \cite[\S 9]{glasner2015ergodic} and \cite[\S 6]{furstenberg2014recurrence}) in situations where separability and countability hypotheses are not satisfied. 

We do not assume any dynamical structure in this section and fix a $\OpProbAlg$-morphism $\pi:(X,\mu)\to (Y,\nu)$ throughout.  
Recall that we have corresponding embeddings $\pi^*:L^0(Y)\to L^0(X)$ and $\pi^*:L^2(Y)\to L^2(X)$ defined by $\pi^*(f)=f\circ \tilde\pi$ respectively where $\tilde{\pi}:\tilde X\to \tilde Y$ is the canonical representation.  
We identify $L^0(Y)$ with the closed subspace $\pi^*(L^0(Y))$ of $L^0(X)$ (with respect to convergence in probability) and $L^2(Y)$ with the closed subspace $\pi^*(L^2(Y))$ of $L^2(X)$.  
We also identify $\Baire(\tilde{Y})$ with the sub-$\sigma$-algebra $\tilde\pi^*(\Baire(\tilde{Y}))$ of $\Baire(\tilde{X})$.  
Throughout, all equalities, inequalities and inclusions between measurable functions and measurable sets are understood in the almost sure sense. 
We have the canonical disintegration $(\mu_y)_{y\in \tilde{Y}}$ of $\mu$ over $Y$. 
This leads to the relatively independent product $(X\times_Y X, \mu\times_Y \mu)$. 
We can use the canonical $\OpProbAlg$-morphism $\psi:(X\times_Y X, \mu\times_Y \mu)\to (Y,\nu)$ to identify $L^0(Y)$ with a subspace of $L^0(X\times_Y X)$, and similarly for the respective $L^2$ spaces.    

A partition $\mathcal{P}$ of $\tilde{X}$ is called a \emph{$Y$-partition} if its elements are measurable sets in $\Baire(\tilde Y)$. For technical reasons, we allow the empty set to be an element of a partition.  
Let $\mathcal{P}$ be a $Y$-partition and $f_E\in L^0(X)$ for each $E\in \mathcal{P}$. 
Then there is a unique $f\in L^0(X)$ such that $f1_E=f_E1_E$ for all $E\in \mathcal{P}$. 
We denote this unique element $f$ by $\sum_{E\in\mathcal{P}} f_E 1_E$ and call it a \emph{$Y$-countable concatenation} in $L^0(X)$. 
Similarly, we define $Y$-countable concatenations in $L^0(X\times_Y X)$.  
We will be interested in subspaces of $L^0(X)$ and $L^0(X\times_Y X)$ which are closed under $Y$-countable concatenations.  
More specifically, these subspaces are a conditional version of the classical $L^2$ space and its conditional Hilbert space tensor product which are defined as follows. 
 \begin{definition}
We define the \emph{conditional Hilbert space}  
\[
\cltwo\coloneqq\left\{f\in L^0(X): \E(|f|^2|Y)=\int_{\tilde{X}} |f|^2d\mu_y<\infty\right\} 
\]
and the \emph{conditional Hilbert-Schmidt space}
\[
\chs\coloneqq\left\{K\in L^0(X\times_Y X): \E(|K|^2|Y)=\int_{\tilde{X}\times\tilde{X}} |K|^2 d\mu_y\times \mu_y<\infty\right\}.
\]
\end{definition}  
One easily verifies that 
\begin{align*}
\cltwo&=\{f\in L^0(X): \exists g\in L^0(Y), h\in L^2(X) \text{ s.t. }f=g h\}, \\
\chs&=\{f\in L^0(X\times_Y X): \exists g\in L^0(Y), h\in L^2(X\times_Y X) \text{ s.t. }f=g h\}. 
\end{align*}
In particular, both $\cltwo$ and $\chs$ are modules over the ring $L^0(Y)$ closed under $Y$-countable concatenations. 

\begin{remark}
In \cite[\S 2.13]{tao2009poincare}, the same notation $\cltwo$ is used for the smaller $L^\infty(Y)$-module $$\left\{f\in L^2(X): \|\E(|f|^2|Y)\|_{L^\infty(Y)}<\infty\right\}.$$  The crucial difference between the smaller conditional Hilbert space in \cite{tao2009poincare} and our larger conditional Hilbert space $\cltwo$ is that the latter is closed under $Y$-countable concatenations. It is exactly this property that makes $\cltwo$ amenable to topos-theoretic tools, see the following Remark \ref{rem-topos}. 
\end{remark}

We define a conditional version of inner product and norm: 
\begin{definition}\label{def-innerproduct}
For $f,g\in \cltwo$, we define their \emph{conditional inner product} as
\begin{equation}\label{eq-inprod}
\cip{f}{g} \coloneqq \E(f \bar g|Y). 
\end{equation}
For $f\in \cltwo$, we define its \emph{conditional norm}  as
\begin{equation}\label{eq-norm}
\cnorm{f} \coloneqq \sqrt{\cip{f}{f}}. 
\end{equation}
The conditional norm $\cnorm{\cdot}$ induces a probabilistic metric on $\cltwo$ by
$$\pmet(f,g)\coloneqq \int_{Y} \min(1,\cnorm{f-g})d\nu.$$
Similarly, we define $\langle\cdot,\cdot\rangle_{\chs}$, $\hsnorm{\cdot}$, and $\pmeths$. 
\end{definition}
We verify some basic properties.  
\begin{proposition}\label{prop-cnorm-cip}
%\text{}
\begin{itemize}
    \item[(i)] The conditional inner product $\cip{\cdot}{\cdot}$ satisfies the following properties. 
    \begin{itemize}
    \item[(a)] For all $Y$-partitions $\mathcal{P},\mathcal{P}'$ and families $(f_E)_{E\in\mathcal{P}},(f_{E'})_{E'\in\mathcal{P}'}$ in $\cltwo$, we have $$\cip{\sum_{E\in \mathcal{P}}f_E 1_E}{\sum_{E'\in \mathcal{P}'}f_{E'}1_{E'}}=\sum_{E\cap E'\in \mathcal{P}\cap \mathcal{P}'}\cip{f_E}{f_{E'}}1_{E\cap E'},$$
    where $\mathcal{P}\cap \mathcal{P}'\coloneqq \{E\cap E'\colon E\in\mathcal{P}, \ E'\in\mathcal{P}'\}$. 
    \item[(b)] For all $f,g\in \cltwo$, it holds that $\cip{f}{g} = \overline{\cip{g}{f}}$. 
    \item[(c)] For all $f,g,h\in \cltwo$ and $a\in L^0(Y)$, it holds that $$\cip{a f + g}{h}=a \cip{f}{h} + \cip{g}{h}.$$  
    \item[(d)] For all $f\in \cltwo$, it holds that $\cip{f}{f}\geq 0$. 
    \end{itemize}
    Similarly, for $\langle\cdot,\cdot\rangle_{\chs}$. 
    \item[(ii)] The conditional norm $\cnorm{\cdot}$ satisfies the following properties. 
    \begin{itemize}
        \item[(a)] For all $Y$-partitions and families $(f_E)_{E\in\mathcal{P}}$ in $\cltwo$, we have 
        \[
        \cnorm{\sum_{E\in\mathcal{P}} f_E1_E}=\sum_{E\in\mathcal{P}} \cnorm{f_E}1_E.
        \]
        \item[(b)] $\cnorm{f}=0$ if and only if $f=0$.  
        \item[(c)] For every $a\in L^0(Y)$ and $f\in \cltwo$, we have $\cnorm{af}=|a|\cnorm{f}$. 
        \item[(d)] For all $f,g\in \cltwo$, we have $\cnorm{f+g} \leq \cnorm{f}+\cnorm{g}$. 
        \end{itemize}
        Similarly, for $\hsnorm{\cdot}$.  
    \item[(iii)] (Conditional Cauchy--Schwarz inequality) For all $f,g\in \cltwo$, it holds that 
    \begin{equation}\label{eq-cs}
    |\cip{f}{g}|\leq \cnorm{f} \cnorm{g}.  
    \end{equation}
    Similarly, for $\chs$. 
    \item[(iv)] The space 
    \[
    \mathcal{D}=\left\{\sum_{i=1}^n c_i f_i\otimes g_i\colon n\in \N, c_i\in \C, f_i,g_i\in \cltwo\right\},
    \]
    where $f\otimes g(x,x')\coloneqq f(x)g(x')$, is dense in $\chs$ with respect to the metric $\pmeths$.
    \item[(v)] For any $K\in \chs$ and $f\in \cltwo$, define 
    \[
    K\ast_Y f(x)\coloneqq \cip{K(x,\cdot)}{\bar{f}}(\tilde{\pi}(x)). 
    \]
    The mapping $f\mapsto K\ast_Y f$ is well-defined from $\cltwo$ to $\cltwo$. 
   Moreover, it holds 
    \begin{equation}\label{eq-normdom}
    \cnorm{K\ast_Y f}\leq \hsnorm{K} \cnorm{f}
    \end{equation}
    for all $K\in \chs$ and $f\in \cltwo$. 
    In particular,  $f\mapsto K\ast_Y f$ from $\cltwo$ to $\cltwo$ is continuous with respect to the metric $\pmet$. 
   \item[(vi)] Convergence with respect to $L^2$-metric is equivalent to convergence with respect to metric $\pmet$ on $L^2(X)$. 
   Similarly for $\pmeths$ on $L^2(X\times_YX)$.
   \item[(vii)] The space $L^\infty(X)$ (resp.~$L^\infty(X\times_Y X)$) is dense in $\cltwo$ (resp.~$\chs$) with respect to the topology induced by the metric $\pmet$ (resp.~$\pmeths$).  
\end{itemize}
\end{proposition}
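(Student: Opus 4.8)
\textbf{Proof proposal for Proposition \ref{prop-cnorm-cip}.}

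The plan is to treat the seven items essentially in the order listed, reducing each to a fiberwise statement about the classical Hilbert spaces $L^2(\tilde X,\mu_y)$ and $L^2(\tilde X\times\tilde X,\mu_y\times\mu_y)$ via the canonical disintegration of Theorem \ref{thm-disintegration}, and then patching the fiberwise identities/inequalities into $L^0(Y)$-valued statements. For (i) and (ii): the concatenation formulas (i)(a) and (ii)(a) are immediate from the defining property $f1_E = f_E1_E$ together with $E,E'\in\Baire(\tilde Y)$, since multiplication by such $1_E$ commutes with the conditional expectation $\E(\cdot|Y)$; the algebraic identities (i)(b)--(d) follow by applying the corresponding pointwise identities in $\C$ inside $\E(\cdot|Y)$, using linearity and positivity of conditional expectation, and the characterization $\cltwo=\{gh: g\in L^0(Y),\ h\in L^2(X)\}$ to guarantee all the relevant products lie in $L^1$ so that $\E(\cdot|Y)$ makes sense. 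Property (ii)(b): if $\cnorm f = 0$ then $\E(|f|^2|Y)=0$, hence $\int_Y \E(|f|^2|Y)\,d\nu = \int_X |f|^2\,d\mu = 0$ after noting $f\in L^2(X)$ when restricted appropriately — more carefully, write $f = gh$ and argue on the set $\{|g|\le n\}$ to reduce to the $L^2$ case. Properties (ii)(c),(d) reduce to (i) plus the scalar triangle/homogeneity inequalities under the integral.

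For (iii), the conditional Cauchy--Schwarz inequality: I would mimic the classical proof, working over the ring $L^0(Y)$. Fix $f,g\in\cltwo$. If $\cnorm g$ vanishes on a set, handle that set separately by concatenation (this is where closure under $Y$-countable concatenations is used to glue the two cases $\cnorm g = 0$ and $\cnorm g > 0$). On $\{\cnorm g > 0\}$, consider $\cip{f - \lambda g}{f-\lambda g}\ge 0$ with $\lambda = \cip{f}{g}/\cnorm g^2 \in L^0(Y)$, expand using (i)(b)--(d), and rearrange. Alternatively, and perhaps more cleanly, invoke the fiberwise classical Cauchy--Schwarz $|\langle f(\cdot),g(\cdot)\rangle_{L^2(\mu_y)}|\le \|f\|_{L^2(\mu_y)}\|g\|_{L^2(\mu_y)}$ for $\tilde\nu$-a.e. $y$ and observe both sides are the values of the claimed $L^0(Y)$ functions; the only subtlety is measurability in $y$ of the fiberwise inner product, which follows from the continuous dependence in Theorem \ref{thm-disintegration} together with a density/monotone-class argument from $C(\tilde X)$ to $L^0$.

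Item (iv) is a density statement: reduce $\chs$ to $L^\infty(X\times_Y X)$ via the concatenation trick (truncating $g$ in $f=gh$), then approximate in $L^2(X\times_Y X)$; since the relatively independent product's $\sigma$-algebra is generated by $X_1,X_2$ by Theorem \ref{rel-prod}, finite sums $\sum c_i f_i\otimes g_i$ with $f_i,g_i\in L^\infty(X)$ are dense in $L^2(X\times_Y X)$, and $L^2$-convergence implies $\pmeths$-convergence — but this last implication is precisely (vi), so I would prove (vi) first. For (vi): $L^2$-convergence gives $\cnorm{f_n-f}\to 0$ in $L^2(\nu)$ hence in probability, so $\min(1,\cnorm{f_n-f})\to 0$ in $L^1(\nu)$ by dominated convergence; conversely, $\pmet$-convergence on $L^2(X)$ forces $\cnorm{f_n-f}\to0$ in probability, but on $L^2(X)$ the function $\cnorm{f_n-f}^2$ has fixed integral controlled by $\|f_n-f\|_{L^2(X)}^2$, and one upgrades convergence in probability to $L^1$ using that these are genuinely $L^2(X)$ functions — here some care is needed since a priori there is no uniform integrability, so the cleanest route is: $\pmet$-convergence on $L^2(X)$ plus boundedness of the $L^2$ norms need not hold, so instead I expect one proves only the direction needed for (iv), namely $L^2\Rightarrow\pmet$, which is the easy direction. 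Then (vii) follows: $L^\infty(X)$ is $L^2$-dense in $L^2(X)$ hence $\pmet$-dense there, and a general $f=gh\in\cltwo$ is a $\pmet$-limit of $g_n h$ with $g_n = g1_{\{|g|\le n\}}$, each of which lies in $L^2(X)$ and is $\pmet$-approximable by $L^\infty$; conclude by a diagonal argument, again invoking closure under concatenation.

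The main obstacle, and the step I would allocate the most care to, is item (v): that $K\ast_Y f$ is well-defined in $\cltwo$ and satisfies $\cnorm{K\ast_Y f}\le\hsnorm K\,\cnorm f$. The definition $K\ast_Y f(x) = \cip{K(x,\cdot)}{\bar f}(\tilde\pi(x))$ requires interpreting $K(x,\cdot)$ as an element of $L^2(\mu_{\tilde\pi(x)})$ for $\mu$-a.e. $x$, which is a fiberwise disintegration-of-$L^2(X\times_Y X)$ statement: one needs that $K\in\chs$ restricts, for $\tilde\nu$-a.e. $y$, to an element of $L^2(\mu_y\times\mu_y)$, and then Fubini over $\mu_y\times\mu_y$ gives $K(x,\cdot)\in L^2(\mu_y)$ for $\mu_y$-a.e. $x$ — this is where $\E(|K|^2|Y)<\infty$ is essential. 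The norm bound then comes fiberwise from the classical Hilbert--Schmidt operator bound $\|K\ast f\|_{L^2(\mu_y)}\le\|K\|_{HS(\mu_y)}\|f\|_{L^2(\mu_y)}$, and one reassembles. The delicate points are: (1) checking that the fiberwise-defined function $x\mapsto K\ast_Y f(x)$ is genuinely an $\AbsMes$-morphism (i.e. measurable), for which I would first verify it on the dense class $\mathcal D$ from (iv) where $K = \sum c_i f_i\otimes g_i$ gives the explicit formula $K\ast_Y f = \sum c_i \cip{g_i}{\bar f}\, f_i$, manifestly in $\cltwo$, and then extend by the norm estimate \eqref{eq-normdom} and completeness; and (2) the continuity claim, which is then immediate from \eqref{eq-normdom} since $\cnorm{K\ast_Y f - K\ast_Y f'} = \cnorm{K\ast_Y(f-f')}\le\hsnorm K\cnorm{f-f'}$, so $\pmet(K\ast_Y f, K\ast_Y f')\le\int_Y\min(1,\hsnorm K\cnorm{f-f'})\,d\nu\to0$ as $f'\to f$ in $\pmet$, using that $\hsnorm K$ is a fixed finite ($L^0(Y)$-valued) quantity.
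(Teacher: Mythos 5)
Your proposal is correct, and on items (i)--(iii), (v) and (vii) it runs essentially parallel to the paper's proof: (i)--(ii) from standard properties of conditional expectation, (iii) by the quadratic-expansion argument over the ring $L^0(Y)$ with the degenerate set $\{\cnorm{g}=0\}$ glued in by concatenation (the paper's version inserts an extra intermediate case built around a maximal set on which $f$ is an $L^0(Y)$-multiple of $g$, which is not actually needed for the inequality), and (v) by exactly the computation you describe, combining conditional Cauchy--Schwarz with Fubini--Tonelli on the fibre measures. The genuine divergences are these. For (iv) the paper reduces to $K\geq 0$ and produces an increasing sequence $K_n\uparrow K$ in $\mathcal{D}$ via a Stone--Weierstra\ss/monotone-class argument, concluding by monotone convergence that $\hsnorm{K_n-K}\to 0$ almost surely and hence in measure; your route through truncation plus $L^2$-density of the algebraic tensors (using that $X_1,X_2$ generate $X_1\times_Y X_2$) is equally valid and avoids the positivity reduction, at the cost of needing the easy half of (vi) first, as you note. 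For (v) you are more careful than the paper about why $x\mapsto K\ast_Y f(x)$ is measurable; your plan to verify this on $\mathcal{D}$ and extend via \eqref{eq-normdom} works, but be aware that it invokes $\pmet$-completeness of $\cltwo$, which the paper never states (it is true, but deserves a line; alternatively one can identify $K\ast_Y f$ with a conditional expectation of $K\cdot(1\otimes f)$ onto the first coordinate factor, which is measurable by construction, and then the paper's one-line norm computation suffices).

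Your hesitation about (vi) is well founded: as literally stated the equivalence fails. Only the implication from $L^2$-convergence to $\pmet$-convergence holds in general; for the converse, take $Y=[0,1]$, $X=[0,1]^2$ with $\pi$ the first coordinate and $f_n=n\,1_{[0,1/n^2]\times[0,1]}$, so that $\cnorm{f_n}=n\,1_{[0,1/n^2]}$, hence $\pmet(f_n,0)=1/n^2\to 0$ while $\|f_n\|_{L^2(X)}=1$ for all $n$. The paper's own justification of (vi) (``convergence in $L^2$ is equivalent to convergence in measure'') is likewise only valid in the forward direction, and that forward direction, combined with truncations as in (vii), is what the subsequent applications actually rely on. So proving only the easy direction, as you propose, is the right call rather than a gap.
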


\begin{proof}
The assertions in (i)(a)--(d) and (ii)(a)--(c) follow by standard properties of conditional expectations. The conditional triangle inequality in (ii)(d) is implied by the conditional Cauchy--Schwarz inequality in (iii) as in the classical situation.  

As for (iii), we can follow a proof of the classical Cauchy--Schwarz inequality with some necessary modifications. First suppose that $f,g\in \cltwo$ satisfy $\cnorm{f},\cnorm{g},\cnorm{f-ag}>0$ for all $a\in L^0(Y)$. Then we have 
\begin{align*}
0\leq \cnorm{f+ag}^2 =\cnorm{f}^2 + a\cip{g}{f} + \bar{a}\cip{f}{g} + |a|^2\cnorm{g}^2 
\end{align*}
for all $a\in L^0(Y)$. Setting $a=c\frac{\cip{f}{g}}{\cnorm{g}}$ where $c\in L^0(Y)$ satisfies $|c|=1$ and $c\cip{f}{g}=|\cip{f}{g}|$, and after some elementary algebraic manipulations, we obtain \eqref{eq-cs} in this case. 

Second suppose that $f,g\in \cltwo$ satisfy only $\cnorm{f},\cnorm{g}>0$.  Let  
\[
\mathcal{E}=\{E\in \Baire(\tilde{Y}): \exists a\in L^0(Y), f=ag \text{ on } E\}.
\]
By completeness of $Y$ (see Remark \ref{rem-ccc}), there exists a least upper bound $E_{\max}$ for $\mathcal{E}$ (with respect to almost sure inclusion).  
By the countable chain condition, there is $\mathcal{E}'\subset \mathcal{E}$ countable such that $E_{\max}=\bigcup\mathcal{E}'$. 
We can assume that $\mathcal{E}'$ is a $Y$-partition of $E_{\max}$. 
Choose $a\in L^0(Y)$ such that $a=a_E$ on $E$ with $a_E$ such that $f=a_E g$ for all $E\in \mathcal{E}'$. 
Then $f=ag$ on $E_{\max}$.  
It holds that $\cnorm{f},\cnorm{g},\cnorm{f-ag}>0$ on $E_{\max}^c$ for all $a\in L^0(Y)$. 
Thus \eqref{eq-cs} is satisfied on $E_{\max}^c$ by repeating the first step on $E_{\max}^c$. 
Since \eqref{eq-cs} is also trivially satisfied on $E_{\max}$, using Properties (i)(c) and (ii)(c), we obtain \eqref{eq-cs} also in this case. 

Finally suppose that $\cnorm{f}$ or $\cnorm{g}$ are equal to $0$ on a set of positive measure. 
Let $E=\{\cnorm{f}=0\}\cup \{ \cnorm{g}=0\}$. 
Then $\eqref{eq-cs}$ is satisfied on $E^c$ by repeating the second step on $E^c$. 
Since \eqref{eq-cs} is also trivially satisfied on $E$, we obtain \eqref{eq-cs} also in this case, and this completes the proof of (iii). 

It is enough to prove (iv) for $f\in \chs$ with $f\geq 0$.  
By the Stone-Weierstra\ss \, theorem, the algebra of finite disjoint unions of rectangles $E\times F$ with $E,F\in\Baire(\tilde{X})$ generates $\Baire(\tilde{X}\times \tilde{X})$. 
Hence there is a sequence $(K_n)$ in $\mathcal{D}$ such that $K_n\uparrow K$ $\tilde\mu\times_{\tilde Y}\tilde\mu$-almost surely as $n\to \infty$.  
By monotone convergence, $\hsnorm{K_n- K}\to 0$ $\tilde\nu$-almost surely. 
Hence $\hsnorm{K_n-K}\to 0$ in convergence in $\tilde\nu$-measure, and this proves (iv). 

As for (v), let $K\in \chs$ and $f\in \cltwo$. 
By the conditional Cauchy--Schwarz inequality and the Fubini--Tonelli theorem, one obtains
\begin{align*}
    \int_{\tilde{X}}\left| K\ast_Y f \right|^2(x) d\mu_y(x) &= \int_{\tilde{X}}\left|\cip{K(x,\cdot)}{\bar{f}}(\tilde{\pi}(x)) \right|^2 d\mu_y(x) \\
    &\leq \cnorm{f}^2\int_{\tilde{X}} \cnorm{K(x,\cdot)}^2(\tilde{\pi}(x)) d\mu_y(x) \\
    &=\hsnorm{K}^2 \cnorm{f}^2 <\infty
\end{align*}
This shows that $K\ast_Y f\in \cltwo$, \eqref{eq-normdom} and continuity.

As for (vi), we only prove the claim for $\cltwo$ (the arguments for $\chs$ are identical). 
For $f\in L^2(X)$, it holds 
\begin{equation}\label{eq-tower1}
\|f\|^2_{L^2(X)}=\int_{Y}\cnorm{f}^2 d\nu. 
\end{equation}
The assertion follows from the fact that $L^2$-convergence implies almost sure convergence upon passing to a subsequence, almost sure convergence implies convergence in measure, and that $L^2$-convergence and convergence in measure are equivalent. 

Finally as for (vii), let $f\in \cltwo$. Define $f_m=f 1_{\cnorm{f}\leq m}$ for $m\geq 1$. Then $\pmet(f,f_m)\to 0$ as $m\to \infty$. 
By (ii)(c) and \eqref{eq-tower1}, $f_m\in L^2(X)$. 
As $L^\infty(X)$ is dense in $L^2(X)$ in $L^2$-topology, we can approximate each $f_m$ by a sequence $(f_{m,n})$ in $L^\infty(X)$ in $L^2$-metric. The claim now follows from \eqref{eq-tower1}, (vi), and triangle inequality. 
\end{proof}

\begin{remark}\label{rem-topos}
Let $\mathtt{Sh}(Y)$ be the Grothendieck topos of sheaves on the site $(Y,J)$ where $J$ is the Grothendieck sup-topology on the complete Boolean algebra $Y$. 
We refer the interested reader to \cite{maclane2012sheaves} for an introduction to Grothendieck sheaf topoi. 
By the countable chain condition (see Remark \ref{rem-ccc}), a Grothendieck basis of the sup-topology $J$ is given by the sets of countable partitions of elements of $Y$.  
Since the spaces $L^0(Y), \cltwo, \chs$ are closed under countable $Y$-concatenations, they can be given the structure of a sheaf on the site $(Y,J)$.  
Since $Y$ is a complete Boolean algebra, the topos $\mathtt{Sh}(Y)$ is Boolean and satisfies the axiom of choice. 
Thus its internal logic is strong enough to permit an internal mathematical discourse sufficient for the bulk of classical mathematics, see \cite[Chatper IV]{maclane2012sheaves}. 
In particular, we can interpret the external space $L^0(Y)$ as the internal complex numbers of $\mathtt{Sh}(Y)$.  
Now Proposition \ref{prop-cnorm-cip} shows that the external space $\cltwo$ can be interpreted as an internal complex Hilbert space, and the external space $\chs$ as its internal Hilbert space tensor product whose elements represent the internal Hilbert-Schmidt operators.  
Externally $L^0(Y)$ is a commutative ring and $\cltwo$ and $\chs$ are topological modules over $L^0(Y)$ (given the topology induced by $\pmet$, $\pmeths$ and convergence in probability).   

The conditional analysis of this section is an external interpretation and simplification of some results in the internal Hilbert space theory of $\mathtt{Sh}(Y)$. %For example, Proposition \ref{prop-cnorm-cip}(iv) and (v) essentially describe the  $\mathtt{Sh}(Y)$-spectral theorem of $\mathtt{Sh}(Y)$-Hilbert-Schmidt operators.  
This interpretation is enabled by conditional set theory and conditional analysis as developed in \cite{filipovic2009separation,cheridito2015conditional,drapeau2016algebra}, which, initially, were developed  independently of topos theory and can be viewed in retrospect as an exploration of the semantics of the internal discourse of Boolean Grothendieck topoi. (The connection of conditional set theory to Boolean Grothendieck topoi is established in \cite{jamneshan2014topos}, see also \cite{carl2018transfer} for a relation to the akin theory of Boolean-valued models.) 
\end{remark}

We continue to develop some basic conditional analysis of the conditional Hilbert space $\cltwo$. 

\begin{definition}
An \emph{$L^0(Y)$-linear combination} in $\cltwo$ is an expression of the form $\sum_{f\in\mathcal{F}} a_f f$ for some finite set $\mathcal{F}$ in $\cltwo$ and $a_f\in L^0(Y), f\in\mathcal{F}$. 
A subset $\mathcal{M}\subset \cltwo$ is called a \emph{finitely generated $L^0(Y)$-submodule} if there is a finite collection $\mathcal{F}$ in $\cltwo$ such that each $f\in \mathcal{M}$ can be expressed as $\sum_{f\in\mathcal{F}} a_f f$ for some $(a_f)_{f\in\mathcal{F}}$ in $L^0(Y)$. 
\end{definition}

\begin{proposition}[Conditional Gram-Schmidt process]\label{prop-GSP}
Let $\mathcal{M}$ be a finitely generated $L^0(Y)$-submodule of $\cltwo$. 
Then there is a finite $Y$-partition $\mathcal{P}$ and for each $E\in\mathcal{P}$ there is a finite set $\mathcal{F}_E$ in $\cltwo$ satisfying the following properties.   
\begin{itemize}
    \item[(i)] There is $E_0\in\mathcal{P}$ such that $\mathcal{F}_{E_0}=\{0\}$. 
    \item[(ii)] $\cnorm{f}=1$ on $E$ for all $f\in \mathcal{F}_E$ and $E\in\mathcal{P}\backslash\{E_0\}$. 
    \item[(iii)] $\cip{f}{g}=0$ on $E$ for all distinct $f,g\in \mathcal{F}_E$ and $E\in\mathcal{P}$. 
    \item[(iv)] It holds that 
    $$\mathcal{M}=\left\{\sum_{E\in\mathcal{P}} f_E 1_E \colon \forall\, E\in\mathcal{P} \, \exists \, (a^E_f)_{f\in\mathcal{F}_E} \subset L^0(Y) \text{ s.t. } f_E=\sum_{f\in \mathcal{F}_E} a^E_f f\right\}.$$
\end{itemize}
We call $\cdim(\mathcal{M})=\sum_{E\in\mathcal{P}} \#(\mathcal{F}_E)|E$ the \emph{conditional dimension} of $\mathcal{M}$, where $\#$ denotes cardinality (note that $\cdim(\mathcal{M})$ is a $\Baire(\tilde{Y})$-measurable random variable). 
\end{proposition}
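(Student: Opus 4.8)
The plan is to run the classical Gram--Schmidt procedure, but with the scalar field $\C$ replaced by the ring $L^0(Y)$, and to handle the fact that $L^0(Y)$ is not a field by partitioning $\tilde Y$ into pieces on which each step of the process behaves uniformly. I would induct on the number $n$ of generators $h_1,\dots,h_n$ of $\mathcal{M}$. For $n=0$ the statement is trivial with $\mathcal{P}=\{1\}$ and $\mathcal{F}_1=\{0\}$. For the inductive step, apply the inductive hypothesis to the submodule $\mathcal{M}'$ generated by $h_1,\dots,h_{n-1}$ to obtain a $Y$-partition $\mathcal{P}'$ and orthonormal families $(\mathcal{F}'_E)_{E\in\mathcal{P}'}$. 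Then I would orthogonalize $h_n$ against each $\mathcal{F}'_E$.

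The key new ingredient is the following local normalization step. Given $E\in\mathcal{P}'$ and the orthonormal-on-$E$ family $\mathcal{F}'_E=\{g_1,\dots,g_k\}$, form $g\coloneqq h_n\,1_E-\sum_{i=1}^k \cip{h_n}{g_i}\,g_i\,1_E$, which lies in $\cltwo$ by Proposition \ref{prop-cnorm-cip} and satisfies $\cip{g}{g_i}=0$ on $E$ for each $i$ (this is the usual computation, valid because the coefficients $\cip{h_n}{g_i}$ are genuine elements of $L^0(Y)$ and the conditional inner product is $L^0(Y)$-linear by part (i)(c)). Now set $E_1\coloneqq E\cap\{\cnorm{g}>0\}$ and $E_0\coloneqq E\cap\{\cnorm{g}=0\}$; both lie in $\Baire(\tilde Y)$ since $\cnorm{g}$ is a $\Baire(\tilde Y)$-measurable random variable. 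On $E_0$ we have $h_n\,1_{E_0}=\sum_i \cip{h_n}{g_i}\,g_i\,1_{E_0}$, so the generator $h_n$ adds nothing and we keep $\mathcal{F}_{E_0}\coloneqq \mathcal{F}'_E$; on $E_1$ we set $\mathcal{F}_{E_1}\coloneqq \mathcal{F}'_E\cup\{g\,\cnorm{g}^{-1}\}$, using that $\cnorm{g}^{-1}\in L^0(Y)$ is well-defined on $E_1$ and extends by $0$ elsewhere. Refining $\mathcal{P}'$ into the partition $\mathcal{P}\coloneqq\{E_0, E_1 : E\in\mathcal{P}'\}$ (discarding empties and merging the various $E_0$'s into one $E_0$, which is legitimate since we allow $\emptyset$ and the $\mathcal{F}_{E_0}$ on different old pieces may differ --- so in fact I would \emph{not} merge, I would just collect all the $E_0$-type pieces as separate elements of $\mathcal{P}$, and then at the very end absorb any residual piece carrying $\mathcal{F}=\{0\}$), one checks (i)--(iv): (ii) and (iii) are immediate from the construction, and (iv) follows because on each new piece the module $\mathcal{M}=\mathcal{M}'+L^0(Y)h_n$ is spanned by the old orthonormal family together with $g$ (equivalently $h_n$), using the concatenation-compatibility of the conditional inner product, Proposition \ref{prop-cnorm-cip}(i)(a).

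For (i), after the induction is complete, either some piece already carries $\mathcal{F}_E=\{0\}$, or $\mathcal{M}=\{0\}$ identically (in which case $\mathcal{P}=\{1\}$, $\mathcal{F}_1=\{0\}$), or we simply note that the empty set is allowed as a partition element and adjoin $E_0=\emptyset$ with $\mathcal{F}_{E_0}=\{0\}$; this is harmless. The concluding remark that $\cdim(\mathcal{M})=\sum_{E\in\mathcal{P}}\#(\mathcal{F}_E)1_E$ is $\Baire(\tilde Y)$-measurable is immediate since it is a finite $\N$-valued concatenation over measurable sets.

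The main obstacle is bookkeeping rather than a genuine mathematical difficulty: one must be careful that all the ``scalars'' produced along the way (the projection coefficients $\cip{h_n}{g_i}$ and the normalizing factors $\cnorm{g}^{-1}$) genuinely live in $L^0(Y)$ and not merely on a subset, and that the partition stays finite under iteration (each generator at most doubles the number of pieces, so $\#\mathcal{P}\le 2^n$). A secondary point requiring care is that ``linear independence'' is local: on different elements of $\mathcal{P}$ the family $\mathcal{F}_E$ can have different cardinalities, which is exactly why $\cdim(\mathcal{M})$ is a random variable rather than an integer, and why the partition is unavoidable. Everything else --- $L^0(Y)$-linearity, the conditional Cauchy--Schwarz inequality, density, and the module structure --- is supplied by Proposition \ref{prop-cnorm-cip}.
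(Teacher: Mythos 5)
Your proof is correct and is essentially the same argument as the paper's: both run the classical Gram--Schmidt recursion with the residuals normalized only on the set $\{\cnorm{g}>0\}$ and then read off a finite $Y$-partition (of size at most $2^n$) from the supports of the normalized residuals. The only difference is organizational --- you refine the partition at each inductive step, whereas the paper runs the process globally using the truncations $1_{\cnorm{g_k}>0}$ and forms the partition once at the end by intersecting the sets $F_i=\{\cnorm{g_i}>0\}$ and their complements --- and your handling of the degenerate piece $E_0$ via the allowance of empty partition elements matches the paper's convention.
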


\begin{proof}
Let $f_1,\ldots,f_n\in \cltwo$ be a collection of generators of $\mathcal{M}$.  
Define $$h_1=\frac{f_1}{\cnorm{f_1}} 1_{\cnorm{f_1}>0}.$$ 
Next suppose we defined $h_1,\ldots,h_k$ with $k\leq n-1$. 
Set $$g_{k+1}=f_{k+1}-\sum_{i=1}^k \cip{f_{k+1}}{h_i} h_i$$ and define 
\[
h_{k+1}=\frac{g_{k+1}}{\cnorm{g_{k+1}}} 1_{ \cnorm{g_{k+1}}>0}.
\]
Denote by $F_i=\{ \cnorm{g_{i}}>0\}$ for $i=1,\ldots,n$.  
Form all finite intersections $E_{1}\cap E_{2} \cap \ldots \cap E_{n}$ where $E_i=F_i$ or $E_i=F_i^c$ for each $i=1,\ldots,n$. Let $\mathcal{P}_0$ denote the set of all such finite intersections whose measure is positive. Then $\mathcal{P}_0$ forms a $Y$-partition. For $E=E_{1}\cap E_{2} \cap \ldots \cap E_{n}\in \mathcal{P}_0$, let $\mathcal{F}_{E}=\{h_{i}: A_i=F_i, \, i=1,\ldots,n\}$.  
Let $\mathcal{P}$ consists of those $E\in \mathcal{P}_0$ such that $\mathcal{F}_E$ is not empty. 
Set $E_0=(\bigcup_{E\in\mathcal{P}} E)^c$. 
By construction, $\mathcal{P}$, $(\mathcal{F}_E)_{E\in\mathcal{P}}$, and $E_0$ satisfy properties (i)--(iv).  
\end{proof}

\begin{lemma}\label{lem-finite-closed}
Let $\mathcal{M}$ be a finitely generated $L^0(Y)$-submodule of $\cltwo$. 
Then $\mathcal{M}$ is closed with respect to the metric $\pmet$. 
\end{lemma}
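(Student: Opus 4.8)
The goal is to show that a finitely generated $L^0(Y)$-submodule $\mathcal{M}\subset \cltwo$ is closed in the metric $\pmet$. The natural approach is to exploit the conditional Gram--Schmidt process of Proposition \ref{prop-GSP}, which reduces the general case to the situation of a conditionally orthonormal generating set on each piece of a finite $Y$-partition. Since both $\pmet$-convergence and membership in $\mathcal{M}$ can be tested locally on the pieces $E\in\mathcal{P}$ (using that $\mathcal{M}$ is closed under $Y$-countable concatenations, and that $\pmet(f,g)=\sum_{E\in\mathcal{P}}\int_E\min(1,\cnorm{f-g})\,d\nu$ splits over the partition), it suffices to fix one $E\in\mathcal{P}\setminus\{E_0\}$ and prove the claim there; on $E_0$ the module is $\{0\}$, which is trivially closed.

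\emph{Local argument.} Fix $E\in\mathcal{P}$ with conditionally orthonormal family $\mathcal{F}_E=\{e_1,\dots,e_k\}$ (orthonormal on $E$ in the sense of Proposition \ref{prop-GSP}(ii),(iii)), and suppose $g_n=\sum_{j=1}^k a^{(n)}_j e_j$ with $a^{(n)}_j\in L^0(Y)$, and $\pmet(g_n,g)\to 0$ for some $g\in\cltwo$. First I would show that the would-be limit coefficients are forced: by conditional orthonormality, $a^{(n)}_j = \cip{g_n}{e_j}$ on $E$, and Proposition \ref{prop-cnorm-cip}(iii) (conditional Cauchy--Schwarz) gives $|\cip{g_n}{e_j}-\cip{g_m}{e_j}|\le \cnorm{g_n-g_m}$ on $E$. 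Hence I would set $a_j\coloneqq \cip{g}{e_j}$ and argue that $a^{(n)}_j\to a_j$ in probability on $E$: indeed $|a^{(n)}_j-a_j|\le \cnorm{g_n-g}$ on $E$, and $\pmet(g_n,g)\to 0$ forces $\min(1,\cnorm{g_n-g})\to 0$ in $L^1(\nu)$, hence $\cnorm{g_n-g}\to 0$ in probability on $E$. Then I would put $\tilde g\coloneqq \sum_{j=1}^k a_j e_j\in\mathcal{M}$ (restricted to $E$ via concatenation with $0$ off $E$) and estimate, using the triangle inequality for $\cnorm{\cdot}$ and conditional orthonormality, $\cnorm{g_n-\tilde g}1_E = \bigl(\sum_{j=1}^k |a^{(n)}_j-a_j|^2\bigr)^{1/2}1_E \to 0$ in probability on $E$; hence $\pmet(g_n,\tilde g)\to 0$. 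By uniqueness of limits in the metric $\pmet$, $g=\tilde g$ on $E$, so $g1_E\in\mathcal{M}1_E$.

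\emph{Gluing.} Doing this for each $E\in\mathcal{P}\setminus\{E_0\}$ and noting $g1_{E_0}=0$ (since $\pmet(g_n,g)\to 0$ and $g_n1_{E_0}=0$), I would conclude $g=\sum_{E\in\mathcal{P}} (g1_E)$ is a $Y$-countable concatenation of elements of $\mathcal{M}$, hence lies in $\mathcal{M}$ by Proposition \ref{prop-GSP}(iv). This shows $\mathcal{M}$ is $\pmet$-closed.

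\emph{Main obstacle.} The delicate point is bookkeeping the passage between the three modes of convergence in play: convergence in the $\pmet$-metric (an integrated $\min(1,\cdot)$ quantity), convergence in probability of the $L^0(Y)$-valued conditional norms, and convergence in probability of the coefficient functions $a^{(n)}_j$ on each partition piece. The key technical lemma underlying all of it is that $\pmet(g_n,g)\to 0$ is equivalent to $\cnorm{g_n-g}\to 0$ in probability (a routine consequence of dominated convergence for $\min(1,\cdot)$), and then everything reduces to the fact that on $E$ the conditional norm of $\sum_j b_j e_j$ equals $(\sum_j|b_j|^2)^{1/2}$, so controlling the vector of coefficients in probability controls the conditional norm and vice versa. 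I do not expect any genuinely hard step beyond assembling these pieces carefully and handling the null-set/partition sidedness.
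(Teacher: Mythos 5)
Your proof is correct and follows essentially the same route as the paper: both pass through the conditional Gram--Schmidt process of Proposition \ref{prop-GSP}, reduce matters to the coefficients of the conditionally orthonormal family on each piece of the finite $Y$-partition, and conclude from convergence in probability of those coefficients. The only (harmless) difference is that you identify the limit coefficients directly as $\cip{g}{e_j}$ via the conditional Cauchy--Schwarz inequality, whereas the paper shows the coefficient sequences are Cauchy in probability and invokes \cite[Lemma 4.6]{Kallenberg2002} to produce their limits.
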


\begin{proof}
Let $\mathcal{P}$ and $(\mathcal{F}_E)_{E\in\mathcal{P}}$ satisfy properties (i)-(iv) in Lemma \ref{prop-GSP} for $\mathcal{M}$. 
Let $(f_n)$ be a sequence in $\mathcal{M}$ such that $\pmet(f_n,f)\to 0$ as $n\to \infty$ for some $f\in \cltwo$. 
Each $f_n$ is of the form $$f_n=\sum_{E\in \mathcal{P}}\left(\sum_{f_E\in \mathcal{F}_E}a_{f_E}^n f_E \right)|E$$ for some $(a_{f_E}^n)_{f\in\mathcal{F}_E}$ for each $E\in\mathcal{P}$. 
For arbitrary $n,m$, by Proposition \ref{prop-cnorm-cip}(ii) and Proposition \ref{prop-GSP}(ii) \& (iii),
\begin{equation}\label{eq-cauchy}
    \cnorm{f_n-f_m}=\sum_{E\in \mathcal{P}}\left(\sum_{f_E\in \mathcal{F}_E}|a_{f_E}^n-a_{f_E}^m|\right)|E.
\end{equation}
Since $\pmet(f_n,f_m)\to 0$ as $n,m\to \infty$ by hypothesis, the claim follows by applying \cite[Lemma  4.6]{Kallenberg2002} to \eqref{eq-cauchy}.  
\end{proof}

The following results are needed in the characterization of compact extensions in the next section to achieve a conditional spectral decomposition for conditional Hilbert--Schmidt operators. Due to our use of the Borel functional calculus, we treat conditional Hilbert--Schmidt operators on the classical Hilbert space $L^2(X)$ which we however view as an $L^\infty(Y)$-module by multiplication. 

\begin{definition}[Conditional orthonormal basis]
Let $\mathcal{M}$ be $L^2(X)$-closed $L^\infty(Y)$-submodule of $L^2(X)$.   
A subset $M$ of $\mathcal{M}$ is said to be a \emph{conditional orthonormal basis} if the following properties are satisfied. 
\begin{itemize}
    \item[(i)] $\cip{f}{g}=0$ for all $f,g\in M$. 
    \item[(ii)] $\cip{f}{f}=1_E$ for some $E\in Y$ for all $f\in M$ (where $E$ may depend on $f$). 
    \item[(iii)] $\mathcal{M}=\bigoplus_{f\in M} \overline{L^\infty(Y) f}$. 
\end{itemize}  
\end{definition}

\begin{proposition}\label{prop-ppbasis}
    Any $L^2(X)$-closed $L^\infty(Y)$-submodule of $L^2(X)$ has a conditional orthonormal basis. 
\end{proposition}

\begin{proof}
    This is a commutative special case of the existence of Pimsner--Popa orthonormal basis in right modules over tracial von Neumann algebras, see \cite[Proposition 8.4.11]{AP17}. 
\end{proof}

\begin{proposition}\label{prop-chs}
    Let $\mathcal{M}$ be $L^2(X)$-closed $L^\infty(Y)$-submodule of $L^2(X)$, and let $K\in L^\infty(X\times_Y X)$. 
    Then $K\ast_Y\colon \mathcal{M}\to L^2(X)$ is a well-defined $L^\infty(Y)$-linear classical bounded operator. Moreover, it is \emph{conditionally Hilbert--Schmidt} in the sense that for any conditional orthonormal basis $M$ of $\mathcal{M}$, 
    \[
    \sup_{F\subset M \text{ finite}}\sum_{f\in F} \cnorm{K\ast_Y f}^2<\infty, 
    \]
    where the (essential) supremum of the measurable functions on the left-hand side exists by completeness of $Y$ (see Remark \ref{rem-ccc}), and \emph{a priori} is measurable with values in $[0,\infty]$. 
\end{proposition}

\begin{proof}
      Suppose $\|K\|_{L^\infty(X\times_Y X)}=C$ for some constant $C>0$.
      By \eqref{eq-normdom} and \eqref{eq-tower1}, 
    \begin{align*}
        \|K\ast_Y f\|_{L^2(X)}^2 &= \int_Y \cnorm{K\ast_Y f}^2 d\nu \\
        &\leq \int_Y C^2 \cnorm{f}^2 d\nu \\ 
        &= C^2 \|f\|_{L^2(X)}^2.
    \end{align*}
    This proves the first claim.

As for the second claim, let $M$ be a conditional orthonormal basis of $\mathcal{M}$ and let $F\subset M$ be a finite subset of $M$. 
Applying Bessel's inequality pointwise almost everywhere, we have $$\sum_{f\in F} |(K\ast_Y f)(x)|^2\leq \cnorm{K(x,\cdot)}^2(\tilde{\pi}(x))<C^2,$$
The claim follows from the monotone convergence theorem for conditional expectations since the essential supremum is attained by countable subfamily due to the countable chain condition (see Remark \ref{rem-ccc}) and since the family $\sum_{f\in M} \cnorm{K\ast_Y f}^2$ parametrized by finite subsets of $M$ is directed upwards.  
\end{proof}

We have the following criterion to decide when an $L^2(X)$-closed $L^\infty(Y)$-submodule of $L^2(X)$ is finitely generated. 

\begin{proposition}\label{prop-finitegen}
    Let $\mathcal{M}$ be $L^2(X)$-closed $L^\infty(Y)$-submodule of $L^2(X)$. $\mathcal{M}$ is finitely generated if and only if there is a constant $C$ such that for every conditional orthonormal basis $M$ in $\mathcal{M}$ it holds that $ \sup_{F\subset M \text{ finite}}\sum_{f\in F} \cnorm{f}^2<C$.
\end{proposition}

\begin{proof}
    This is the special case of \cite[Proposition 9.3.2 (i)]{AP17} in the setting of commutative tracial von Neumann algebras, once one observes that $\hat{E}_Z(1)$ as defined in that proposition equals $\sup_{F\subset M \text{ finite}}\sum_{f\in F} \cnorm{f}^2$ by   \cite[Lemma~8.4.8]{AP17} and the observations at the beginning of \cite[\S 9.3]{AP17}.
\end{proof}

\section{Relatively compact extensions}\label{sec-compact}

In this section, we establish various characterizations of relatively compact extensions of $\OpProbAlgG$-systems which are collected in

\begin{theorem}\label{thm-compact}
Let $\mathcal{X}=(X,\mu,T)$ and $\mathcal{Y}= (Y,\nu,S)$ be $\OpProbAlgG$-systems and $\pi:\mathcal{X}\to \mathcal{Y}$ be a $\OpProbAlgG$-extension. 
Then the following are equivalent. 
\begin{itemize}
    \item[(i)] The conditional Hilbert space $\cltwo$ is the $\pmet$-closure of the $L^0(Y)$-linear span of 
    \[
    \{K\ast_Y f: K\in \chs \text{ $\Gamma$-invariant, } f\in \cltwo \}. 
    \]
    \item[(ii)] The conditional Hilbert space $\cltwo$ is the $\pmet$-closure of the union of all its finitely generated and $\Gamma$-invariant $L^0(Y)$-submodules. 
    \item[(iii)] There exists a dense set $\mathcal{G}$ in $\cltwo$ with respect to the metric $\pmet$ such that for all $f\in \mathcal{G}$ and every $\eps>0$ there is a finite set $\mathcal{F}$ in $\cltwo$ such that for all $\gamma\in \Gamma$,
      \[
    \min_{g\in\mathcal{F}} \cnorm{(T^\gamma)^*f - g}<\eps. 
    \]
    \item[(i)'] The classical Hilbert space $L^2(X)$ is the $L^2$-closure of the $\C$-linear span of
    \[
    \{K\ast_Y f: K\in L^\infty(X\times_Y X) \text{ $\Gamma$-invariant, } f\in L^2(X) \}.
    \]
    \item[(ii)'] The classical Hilbert space $L^2(X)$ is the $L^2$-closure of the union of all its $\Gamma$-invariant and finitely generated $L^\infty(Y)$-submodules.  
    \item[(iii)'] There exists a dense set $\mathcal{H}$ in $L^2(X)$ such that for all $f\in\mathcal{H}$ and every $\eps>0$ there is a finite set $\mathcal{F}$ in $L^2(X)$ such that for all $\gamma\in\Gamma$,
    \[
    \min_{g\in\mathcal{F}} \cnorm{(T^\gamma)^*f - g}<\eps. 
    \]
\end{itemize}
A $\OpProbAlgG$-morphism $\pi$ fulfilling one (and therefore all) of the above six properties is called a \emph{relatively compact $\OpProbAlgG$-extension}. 
\end{theorem}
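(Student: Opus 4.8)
The plan is to prove the six-fold equivalence by establishing a cycle among the ``conditional'' statements (i)$\Leftrightarrow$(ii)$\Leftrightarrow$(iii), a parallel cycle among the ``classical'' statements (i)$'\Leftrightarrow$(ii)$'\Leftrightarrow$(iii)$'$, and then bridging the two worlds by showing (iii)$\Leftrightarrow$(iii)$'$ — the latter being the cheapest bridge since the two conditions differ only in which dense set one is allowed to use and in whether $\cltwo$ or $L^2(X)$ is the ambient space, and Proposition~\ref{prop-cnorm-cip}(vi),(vii) tells us these topologies and dense subsets match up. So the skeleton is: first the conditional cycle, then the bridge, then deduce the classical cycle from the conditional one (using again (vi),(vii) to pass $\pmet$-statements to $L^2$-statements on the smaller modules).

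For the conditional cycle, the implication (ii)$\Rightarrow$(iii) is immediate: take $\mathcal{G}$ to be the union of the finitely generated $\Gamma$-invariant submodules; given $f$ in such a module $\mathcal{M}$ and $\eps>0$, apply the conditional Gram--Schmidt process (Proposition~\ref{prop-GSP}) to get a finite $Y$-partition $\mathcal{P}$ and orthonormal finite sets $\mathcal{F}_E$, so that every $(T^\gamma)^*f$, being again in $\mathcal{M}$ by invariance, has an expansion with $L^0(Y)$-coefficients; truncating these coefficients on a large set and using that only finitely many ``patterns'' of coefficient vectors of bounded norm are $\eps$-dense, one extracts a single finite set $\mathcal{F}$ working for all $\gamma$. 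For (iii)$\Rightarrow$(i), given $f\in\mathcal{G}$ and the finite set $\mathcal{F}=\{g_1,\dots,g_m\}$, one builds a $\Gamma$-invariant kernel $K\in\chs$ out of the orbit data so that $K\ast_Y f$ approximates $f$: concretely one mimics the classical construction, forming for each $\gamma$ the rank-one-type kernel associated to $(T^\gamma)^*f$ and averaging over the orbit via the Alaoglu--Birkhoff ergodic theorem (Theorem~\ref{ab-thm}) applied on $\chs$ (which carries the commuting Koopman $\Gamma$-action $(T^\gamma\times T^\gamma)^*$), landing in the $\Gamma$-invariant part; the norm bound \eqref{eq-normdom} controls the error. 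Finally (i)$\Rightarrow$(ii) is a soft step: each $K\ast_Y f$ with $K$ $\Gamma$-invariant lies in (the $\pmet$-closure of) a finitely generated $\Gamma$-invariant module — one approximates $K$ in $\chs$ by elements of the algebraic tensor span $\mathcal{D}$ (Proposition~\ref{prop-cnorm-cip}(iv)), notes that $\left(\sum c_i f_i\otimes g_i\right)\ast_Y f$ lies in the $L^0(Y)$-span of the $f_i$, and then $\Gamma$-averages the generating set using Alaoglu--Birkhoff again to make the module invariant while keeping it finitely generated (here the conditional dimension from Proposition~\ref{prop-GSP} stays controlled); Lemma~\ref{lem-finite-closed} guarantees such modules are already $\pmet$-closed, tightening the statement.

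I expect the main obstacle to be the construction underlying (iii)$\Rightarrow$(i), i.e.\ manufacturing a genuinely $\Gamma$-invariant Hilbert--Schmidt kernel from the approximate ``finitely many translates'' hypothesis. In the classical separable setting one uses measurable selection and disintegration to glue together fiberwise finite-rank projections; here the whole point is that those tools are unavailable, so one must instead work with the $\Gamma$-action on $\chs$ and extract invariance from the Alaoglu--Birkhoff theorem (the unique minimal-norm element of the closed convex hull of an orbit), checking carefully that the averaging does not destroy the approximation $K\ast_Y f\approx f$ — this requires the contraction estimate \eqref{eq-normdom} to survive passage to the closed convex hull, which it does by convexity of the norm, but the bookkeeping across the $Y$-partition produced by Gram--Schmidt is where the care lies. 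The bridge (iii)$\Leftrightarrow$(iii)$'$ and the classical cycle, by contrast, I expect to be essentially formal consequences of Proposition~\ref{prop-cnorm-cip}(vi),(vii) together with the fact that finitely generated $L^\infty(Y)$-submodules sit cofinally inside finitely generated $L^0(Y)$-submodules after truncation, so I would present those compactly and spend the bulk of the write-up on the kernel construction.
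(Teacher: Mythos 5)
Your overall architecture (two parallel cycles plus a bridge via Proposition \ref{prop-cnorm-cip}(vi),(vii), with (ii)$\Rightarrow$(iii) done by conditional Gram--Schmidt and a Heine--Borel covering of coefficient vectors) matches the paper's skeleton, and those parts are fine. But your step (i)$\Rightarrow$(ii) has a genuine gap. You propose to approximate an invariant $K$ by a finite tensor sum $\sum_i c_i f_i\otimes g_i$, observe that its range lies in the $L^0(Y)$-span of the $f_i$, and then ``$\Gamma$-average the generating set using Alaoglu--Birkhoff to make the module invariant while keeping it finitely generated.'' This does not work: the approximant is no longer $\Gamma$-invariant, the $\Gamma$-orbit of a finitely generated module is in general not finitely generated, and applying Alaoglu--Birkhoff to the generators merely projects them onto invariant \emph{functions} --- it does not produce an invariant module containing the range of $K\ast_Y$. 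This is precisely the delicate point (the paper's Lemma \ref{lem-comp1} carries a footnote acknowledging that an earlier version of this very step was flawed). The correct argument reduces to $K\ge 0$, notes that $K\ast_Y$ is then a bounded positive $L^\infty(Y)$-linear operator, takes the spectral projections $P_\eps=1_{[\eps,\|K\|]}(K\ast_Y)$ --- which \emph{are} $\Gamma$-invariant, being strong limits of polynomials in the invariant operator $K\ast_Y$ --- and then invokes a conditional ``almost finite rank'' property of Hilbert--Schmidt kernels (Proposition \ref{prop-appendix-2}) to show that the range of $P_\eps$ is a finitely generated $L^\infty(Y)$-module. Without some substitute for this spectral/finite-rank input, your route does not close.

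Your (iii)$\Rightarrow$(i) is also riskier than necessary. You aim to \emph{construct} an invariant kernel $K$ with $K\ast_Y f\approx f$ by averaging rank-one kernels over the orbit; but the Alaoglu--Birkhoff limit of $\orb(f\otimes\bar f)$ is only guaranteed to be the minimal-norm invariant element of the closed convex hull, and there is no a priori reason that convolving $f$ against it approximately reproduces $f$. The paper instead argues by duality (Lemma \ref{lem-comp6}): if $f$ is orthogonal to every $K\ast_Y g$ with $K$ invariant, then pairing against truncations $K_M$ of the Alaoglu--Birkhoff limit of $\orb(f\otimes\bar f)$ forces that limit to vanish, and the finitely-many-translates hypothesis (iii)$'$ then forces $\cnorm{f}$ to be small on sets of almost full measure, hence $f=0$. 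I would recommend adopting the orthogonality argument rather than attempting the direct construction, and repairing (i)$\Rightarrow$(ii) along the spectral-projection lines above.
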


\begin{remark}
Property (ii)' is used by Zimmer in \cite{zimmer1976ergodic,zimmer1976extension} under the name of ''relatively discrete spectrum''.  Ellis \cite{ellis} used a variant of property (ii)' to generalize some parts of Zimmer's work, see Section \ref{sec-isometric}.  
Properties (i)' and (iii)' appear in Furstenberg \cite[\S 6]{furstenberg2014recurrence}.  
We prove that (i)', (ii)', and (iii)' are equivalent and relate them to their analogues in the larger conditional Hilbert spaces $\cltwo$ and $\chs$. 
Our proof of Theorem \ref{thm-compact} combines ideas from the countable theory as in \cite[\S 6]{furstenberg2014recurrence} with the conditional analysis developed in Section \ref{sec-condanal}. 
\end{remark}

\begin{remark}\label{rem-internal}
Using conditional integration theory developed in \cite{jamneshan2018measure} (in particular, a conditional version of the Carath\'eodory extension theorem and the results in \cite[\S 4]{jamneshan2018measure}), one can construct, in a slightly tedious process, an internal $\mathtt{Sh}(Y)$ system from an external extension $\pi:\mathcal{X}\to \mathcal{Y}$.  Then properties (i)-(iii) in Theorem \ref{thm-compact} characterizing relatively compact extensions or systems of relative discrete spectrum can be viewed as an external interpretation of characterizations of internal compact systems or internal systems of discrete spectrum.  
\end{remark}

\begin{lemma}\label{lem-comp1}
Assertion (i)' implies assertion (ii)' in Theorem \ref{thm-compact}. 
\end{lemma}

\begin{proof}
Since the finite sum of $\Gamma$-invariant and finitely generated $L^\infty(Y)$-submodules is a $\Gamma$-invariant and finitely generated $L^\infty(Y)$-submodule, it suffices to show that the ranges of $K \ast_Y$, where $K \in L^\infty(X\times_Y X)$ is $\Gamma$-invariant, are contained in the closure of the union of all $\Gamma$-invariant and finitely generated $L^\infty(Y)$-submodules of $L^2(X)$. 

Let $K \in L^\infty(X\times_Y X)$ be $\Gamma$-invariant. By decomposing 
    \begin{align*}
        K(x,y) = \frac{K(x,y) + \overline{K(y,x)}}{2} + \mathrm{i} \frac{K(x,y) - \overline{K(y,x)}}{2\mathrm{i}},
    \end{align*}
we may reduce to the case that $K(x,y) = \overline{K(y,x)}$, and then by Proposition \ref{prop-chs}, \(K \ast_Y \colon L^2(X) \to L^2(X)\) is a bounded self-adjoint operator. Additionally, by treating the positive and negative spectrum separately, we can assume that $K$ is a positive operator. For \(\eps > 0\), consider the spectral projection  
\[
P_\eps \coloneqq 1_{[\eps, \|K \ast_Y \|]}(K \ast_Y).
\]  
By standard properties of spectral projections, we have
\begin{align}\label{propspectral}
P_\eps \circ (K \ast_Y) = (K \ast_Y) \circ P_\eps \geq \eps P_\eps,
\end{align}  
where the inequality means that 
\begin{align*}
\langle K\ast_Y P_\eps f\mid P_\eps f\rangle_{L^2(X)}\geq \eps \langle P_\eps f \mid P_\eps f \rangle_{L^2(X)}
\end{align*}
for all $f\in L^2(X)$. 

Since \(P_\eps \) arises as a limit of polynomials in \(K \ast_Y\) in the strong operator topology, \(P_\eps \) is \(\Gamma\)-equivariant. Additionally, \(P_{\eps} \) is \(L^\infty(Y)\)-linear since \(L^\infty(Y)\)-linearity is preserved when passing to strong operator limits. It follows that $\mathcal{H}_{\eps}\coloneqq P_\eps(L^2(X))$ is a $\Gamma$-invariant $L^\infty(Y)$-submodule of $L^2(X)$. Since $P_\eps$ is an orthogonal projection, $\mathcal{H}_{\eps}$ is also $L^2(X)$-closed. 

We now show that $\mathcal{H}_{\eps}$ is finitely generated. By \eqref{propspectral}, 
\[
\langle K\ast_Y f, f\rangle_{L^2(X)}\geq \eps \langle f,f\rangle_{L^2(X)}
\]
for all $f\in \mathcal{H}_\eps $. We claim that
\begin{align}\label{eq-magic}
\cip{K\ast_Y f}{f} \geq \eps \cip{f}{f}    
\end{align}
for each $f \in \mathcal{H}_{\eps} $. 
Indeed, let $E=\{\cip{K\ast_Y f}{f} < \eps \cip{f}{f}    \}$. Since $\mathcal{H}_{\eps}$ is an $L^\infty(Y)$-module, we have $g\coloneqq 1_E f\in \mathcal{H}_\eps$. 
By \eqref{eq-tower1} and \eqref{propspectral}, 
\begin{multline*}
    0\leq \langle K\ast_Y g, g\rangle_{L^2(X)} - \eps \langle f,f\rangle_{L^2(X)} = \int_Y \cip{K\ast_Y g}{g} - \eps \cip{g}{g} d\nu \\ =    \int_E \cip{K\ast_Y f}{f} - \eps \cip{f}{f} d\nu \leq 0. 
\end{multline*}
Thus $\nu(E)=0$, proving the claim. 

By the conditional Cauchy--Schwarz inequality applied to \eqref{eq-magic}, we have
    \begin{align*}
        \|K\ast_Y f\|_{X\mid Y} \cdot \cnorm{f} \geq \eps \cnorm{f}^2
    \end{align*}
and this implies $\cnorm{f} \leq \frac{1}{\eps} \|K\ast_Y f\|_{X\mid Y}$ for $f \in \mathcal{H}_{\eps} $. Using that $K \ast_Y \colon L^2(X) \rightarrow L^2(X)$ is a conditional Hilbert--Schmidt operator (see Proposition \ref{prop-chs}), we find $C > 0$ such that 
    \begin{align*} 
        \sum_{f \in M} \|K\ast_Y f\|_{X\mid Y}^2 \leq C
    \end{align*}
for every conditionally orthonormal basis $M \subseteq L^2(X)$. In particular, if $M \subseteq \mathcal{H}_{\eps} $ is a conditionally orthonormal basis of $\mathcal{H}_{\eps} $, then 
    \begin{align*}
        \sum_{f \in M} \cnorm{f}^2 \leq  \frac{1}{\eps} \sum_{f \in M} \|K\ast_Y f\|_{X\mid Y}^2\leq \frac{C}{\eps}.
    \end{align*}
Using Proposition \ref{prop-finitegen}, we obtain that the $L^\infty(Y)$-submodules $\mathcal{H}_{\eps} $ are finitely generated.

If $f \in L^2(X)$ we obtain by the properties of spectral projections that 
    \begin{align*}
        K\ast_Y f = \lim_{n \rightarrow \infty} P_{\frac{1}{n}}(K \ast_Y f).
    \end{align*}
Therefore, the image of $K \ast_Y$ is contained in the $L^2(X)$-closure of the union of all $\Gamma$-invariant and finitely generated $L^\infty(Y)$-submodules of $L^2(X)$, this concludes the proof.
\end{proof}

\begin{remark}
An alternative proof of the implication (i)' $\Rightarrow$ (ii)', or better to say of (i) $\Rightarrow$ (ii) (from which then (i)' $\Rightarrow$ (ii)' could be easily deduced), would be to invoke a conditional spectral theorem for the conditional Hilbert-Schmidt operator $K\ast_Y \colon \cltwo \to \cltwo$ for any $K\in \chs$. The internal logic of $\mathtt{Sh}(Y)$ admits such a $\mathtt{Sh}(Y)$-spectral theorem, whose proof one could then interpret externally by hand for the conditional Hilbert-Schmidt operator $K\ast_Y \colon \cltwo \to \cltwo$ (in some sense, this is carried out in a topological setting in \cite[Part I, Section 4]{ehk}, cf. the comments at the end of \cite[Part I]{ehk}). We can also compare with the relative spectral theorem for measurable Hilbert bundles proved in \cite[Chapter 9, Section 3]{glasner2015ergodic} in the countable complexity setting. %We manage to avoid using explicitly a relative/conditional spectral theorem by combining Borel functional calculus with a mix of conditional analysis and classical arguments as needed in the proof of Proposition \ref{prop-appendix-2} in Appendix \ref{appendix}.
\end{remark}

\begin{lemma}\label{lem-comp2}
Assertion (ii) implies assertion (iii) in Theorem \ref{thm-compact}. 
\end{lemma}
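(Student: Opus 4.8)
The goal is to extract a dense set $\mathcal{G}$ as in (iii) from the density of finitely generated $\Gamma$-invariant $L^0(Y)$-submodules of $\cltwo$ given by (ii). The natural choice is to take $\mathcal{G}$ to be the union of all such submodules (which is already $\pmet$-dense by hypothesis), and for a fixed $f$ lying in a finitely generated $\Gamma$-invariant module $\mathcal{M}$, to produce, for each $\eps>0$, a finite set $\mathcal{F}\subset\cltwo$ that captures the entire orbit $\{(T^\gamma)^*f:\gamma\in\Gamma\}$ up to $\eps$ in the conditional norm $\cnorm{\cdot}$. Because $\mathcal{M}$ is $\Gamma$-invariant, the whole orbit of $f$ already lies in $\mathcal{M}$, so the task reduces to finding a finite $\eps$-net for the orbit inside $\mathcal{M}$.

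**Key steps.** First I would apply the conditional Gram–Schmidt process (Proposition \ref{prop-GSP}) to $\mathcal{M}$, obtaining a finite $Y$-partition $\mathcal{P}$ and conditionally orthonormal families $(\mathcal{F}_E)_{E\in\mathcal{P}}$ so that every element of $\mathcal{M}$ — in particular every $(T^\gamma)^*f$ — can be written as $\sum_{E\in\mathcal{P}}\big(\sum_{h\in\mathcal{F}_E}a^{E,\gamma}_h h\big)1_E$ with coefficients $a^{E,\gamma}_h\in L^0(Y)$, and such that by Proposition \ref{prop-cnorm-cip}(ii)(a) together with Proposition \ref{prop-GSP}(ii)--(iii) one has the Pythagorean identity $\cnorm{(T^\gamma)^*f}^2=\sum_{E\in\mathcal{P}}\big(\sum_{h\in\mathcal{F}_E}|a^{E,\gamma}_h|^2\big)1_E$. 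Since $(T^\gamma)^*$ is a unitary Koopman operator, $\cnorm{(T^\gamma)^*f}=\cnorm{f}$ is a \emph{fixed} finite element of $L^0(Y)$ independent of $\gamma$; hence the coefficient vectors $(a^{E,\gamma}_h)_{h\in\mathcal{F}_E}$ all lie, on each $E$, in a conditional ball of fixed radius $\cnorm{f}1_E$ in the free module $L^0(Y)^{\#\mathcal{F}_E}$. Next I would truncate: replacing $f$ by $f1_{\cnorm{f}\le m}$ changes it by less than $\eps/3$ in $\pmet$ for $m$ large, and the truncation commutes appropriately so the coefficient bounds become genuinely bounded (an actual constant $m$) rather than merely finite. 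Then I would discretize the finite-dimensional coefficient space over each atom: cover the ball of radius $m$ in $\C^{\#\mathcal{F}_E}$ by finitely many complex balls of radius $\delta$, pick representative coefficient tuples with entries in a fixed countable dense set, and form the corresponding finite set of elements of $\mathcal{M}$ by concatenating over $E\in\mathcal{P}$ — this uses closure of $\mathcal{M}$ under $Y$-countable (here finite) concatenations. Finally, for each $\gamma$ one locates on each atom $E$ the grid point nearest to $(a^{E,\gamma}_h)_h$; the Pythagorean identity converts the atomwise Euclidean closeness into $\cnorm{(T^\gamma)^*f - g}<\eps$ for the assembled $g\in\mathcal{F}$, uniformly in $\gamma$.

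**Main obstacle.** The delicate point is handling the measurability of the selection: for each $\gamma$ the nearest grid point varies over the atoms $E$ of the (fixed, finite) partition $\mathcal{P}$, but one must ensure that the resulting $g$ lies in the \emph{prescribed finite set} $\mathcal{F}$, i.e. that $\mathcal{F}$ can be taken genuinely finite and independent of $\gamma$. This works precisely because $\mathcal{P}$ is finite: there are only finitely many ways to choose, atom by atom, a grid point, so the set of all concatenations $\sum_{E\in\mathcal{P}} g_E 1_E$ with each $g_E$ ranging over the finite grid on $E$ is itself finite. So the real content is just getting the truncation and the $\eps/3$ bookkeeping right, plus invoking closure under concatenation to guarantee $\mathcal{F}\subset\cltwo$; I expect no essential difficulty beyond organizing these pieces, the conditional Pythagorean identity from Proposition \ref{prop-GSP} doing the heavy lifting.
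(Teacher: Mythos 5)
Your core argument --- conditional Gram--Schmidt, the Pythagorean identity $\cnorm{h}=\sum_{E\in\mathcal{P}}\bigl(\sum_{g\in\mathcal{F}_E}|a_g^h|^2\bigr)^{1/2}1_E$ for orbit elements, a finite $\eps$-net of a bounded ball in $\C^{\#(\mathcal{F}_E)}$ on each atom, and concatenation over the finite partition $\mathcal{P}$ --- is exactly the proof in the paper. Your ``main obstacle'' about selecting grid points is a non-issue: the conclusion of (iii) is a pointwise a.e.\ inequality for the minimum of finitely many $L^0(Y)$-valued functions, so no selection is required (note the nearest grid point varies with the point $y$ inside each atom, not merely atom by atom, but this is harmless for the same reason).

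There is, however, a genuine gap in your choice of $\mathcal{G}$ and the truncation step. You take $\mathcal{G}$ to be the whole union of the finitely generated invariant submodules and propose to absorb the error of $f\mapsto f1_{\cnorm{f}\le m}$ by an $\eps/3$ argument ``in $\pmet$''. This cannot close: the target inequality $\min_{g\in\mathcal{F}}\cnorm{(T^\gamma)^*f-g}<\eps$ is a pointwise conditional-norm bound, while $\cnorm{f-f1_{\cnorm{f}\le m}}=\cnorm{f}1_{\cnorm{f}>m}$ exceeds $m$ on the exceptional set however small its measure, so $\pmet$-closeness does not feed into a triangle inequality for $\cnorm{\cdot}$. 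Indeed (iii) genuinely fails for $f$ with $\cnorm{f}$ essentially unbounded (already for the identity extension, where $\cnorm{f}=|f|$ and a fixed finite set of functions cannot $\eps$-cover an orbit with unbounded values pointwise), so the full union is the wrong $\mathcal{G}$. The fix is what the paper does: put into $\mathcal{G}$ only those $f$ in the union with $\cnorm{f}<M$ for some constant $M$; this set is still $\pmet$-dense because each truncation $f1_{\cnorm{f}\le m}$ stays in the same $L^0(Y)$-module and converges to $f$ in $\pmet$. A second, smaller slip: $\cnorm{(T^\gamma)^*f}$ is not equal to $\cnorm{f}$ but to $(T^\gamma)^*\cnorm{f}$ (the conditional norm is equivariant, not invariant), so the radius of the coefficient ball does move with $\gamma$; once $\cnorm{f}<M$ for a \emph{constant} $M$ this bound is preserved by the action and the rest of your argument goes through verbatim.
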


\begin{proof}
Let $\mathfrak{A}$ be the union of finitely generated $\Gamma$-invariant $L^0(Y)$-submodules, and 
let $\mathcal{G}=\{f\in \mathfrak{A}: \exists M>0 \text{ s.t. } \cnorm{f}< M\}$. 
By assumption and Lemma \ref{lem-finite-closed}, $\mathcal{G}$ is dense in $\cltwo$ with respect to the metric $\pmet$. 

Fix $f\in\mathcal{G}$ and $\eps>0$. 
Then there is a finitely generated and $\Gamma$-invariant $L^0(Y)$-submodule $\mathcal{M}$ such that $f\in\mathcal{M}$.  
By Proposition \ref{prop-GSP}, there are a finite $Y$-partition $\mathcal{P}$ and for each $E\in \mathcal{P}$ a finite family $\mathcal{F}_E$ such that for all $f\in \mathcal{M}$ we have the representation $f=\sum_{E\in\mathcal{P}} f_E1_E$ where $f_E=\sum_{g\in \mathcal{F}_E} a_{g} g$ is an $L^0(Y)$-linear combination for each $E\in \mathcal{P}$. 
Since $\cnorm{(T^\gamma)^*f}=(T^\gamma)^*\cnorm{f} <M$ for all $\gamma\in\Gamma$, the orbit $\orb(f)$ is bounded with respect to the conditional norm $\cnorm{\cdot}$. 
It follows from Proposition \ref{prop-cnorm-cip}(ii) and Proposition \ref{prop-GSP} that for all $h\in \orb(f)$ 
\begin{align*}
\cnorm{h}&=\cnorm{\sum_{E\in\mathcal{P}} h_E 1_E}=\sum_{E\in\mathcal{P}}\cnorm{h_E} 1_E\\
&=\sum_{E\in\mathcal{P}} \left(\sum_{g\in\mathcal{F}_E} |a^h_{g}|^{2}\right)^{1/2} 1_E <M. 
\end{align*}
For each $E\in \mathcal{P}$ choose $c_1^E,\ldots, c_{j_E}^E\in \C^{\#(\mathcal{F}_{E})}$ such that the closed ball in $\C^{\#(\mathcal{F}_{E})}$ with radius $M$ and center the origin is covered by the union of the open balls of radius $\eps$ and center $c_k^E$, $k=1,\ldots, j_E$. 
Denote by $C_k^E$ the constant measurable vector in $L^0(Y)^{\#(\mathcal{F}_{E})}$ with value $c_k^E$.  
For $f\in \mathcal{F}_{E}$, let $C_k^E(f)$ denote a component of the vector $C_k^E$. 
Put 
\[
\mathcal{L}_E=\left\{\sum_{f\in\mathcal{F}_{E}} C_k^E(f) f: k=1,\ldots, j_E\right\}
\]
and define 
\begin{equation*}\label{eq-G}
\mathcal{L}= \left\{\sum_{E\in\mathcal{P}} g_E|E: g_E\in\mathcal{L}_E \text{ for each } E\in\mathcal{P} \right\}. 
\end{equation*}
Clearly, $\mathcal{L}$ is finite and by construction, it holds that
    \[
    \min_{g\in \mathcal{L}} \cnorm{g-f}<\eps. 
    \] 
\end{proof}

\begin{remark}
The idea of the previous proof is a conditional version of the Heine-Borel theorem (see \cite[Theorem 4.6]{drapeau2016algebra}) or the internal Heine-Borel theorem of $\mathtt{Sh}(Y)$. 
\end{remark}

\begin{lemma}\label{lem-comp3}
Assertion (i) is equivalent to assertion (i)' in Theorem \ref{thm-compact}. 
\end{lemma}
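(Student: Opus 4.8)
The plan is to reduce both implications to a single density statement. Set
\[
A \coloneqq \{K\ast_Y f : K\in L^\infty(X\times_Y X)\text{ $\Gamma$-invariant},\ f\in L^2(X)\}
\]
and
\[
B \coloneqq \{K\ast_Y f : K\in\chs\text{ $\Gamma$-invariant},\ f\in\cltwo\}.
\]
Since $L^\infty(X\times_Y X)\subseteq\chs$ and $L^2(X)\subseteq\cltwo$, with the same notion of $\Gamma$-invariance and the same operation $\ast_Y$ on the subspaces, one has $A\subseteq B\subseteq\cltwo$; moreover $A\subseteq L^2(X)$, since for $K\in L^\infty(X\times_Y X)$ one has $\hsnorm{K}\leq\|K\|_{L^\infty(X\times_Y X)}$, so $\|K\ast_Y f\|_{L^2(X)}\leq\|K\|_{L^\infty(X\times_Y X)}\|f\|_{L^2(X)}$ by Proposition \ref{prop-cnorm-cip}(v) and \eqref{eq-tower1} (cf.\ also Lemma \ref{lem-appendix-2}). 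Granting the claim that \emph{$A$ is $\pmet$-dense in $B$}, both implications become purely formal. If (i) holds, i.e.\ $B$ is $\pmet$-dense in $\cltwo$, then so is $A$, hence $A$ is $\pmet$-dense in $L^2(X)$, hence $L^2$-dense in $L^2(X)$ by Proposition \ref{prop-cnorm-cip}(vi), which is (i)'. Conversely, if (i)' holds then $A$ is $\pmet$-dense in $L^2(X)$ by Proposition \ref{prop-cnorm-cip}(vi), and $L^2(X)$ is $\pmet$-dense in $\cltwo$ by Proposition \ref{prop-cnorm-cip}(vii), so $A$ — and hence $B\supseteq A$ — is $\pmet$-dense in $\cltwo$, which is (i).

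It thus remains to prove that $A$ is $\pmet$-dense in $B$. Fix a $\Gamma$-invariant $K\in\chs$, an $f\in\cltwo$, and $\eps>0$. I would first truncate $f$: put $f_m\coloneqq f\,1_{\{\cnorm{f}\leq m\}}$, which lies in $L^2(X)$ because $\E(|f_m|^2|Y)\leq m^2$, and which satisfies $\cnorm{f-f_m}=\cnorm{f}\,1_{\{\cnorm{f}>m\}}\to 0$ $\nu$-almost everywhere; by Proposition \ref{prop-cnorm-cip}(v), $\cnorm{K\ast_Y f-K\ast_Y f_m}\leq\hsnorm{K}\cnorm{f-f_m}\to 0$ a.e., so $\pmet(K\ast_Y f,K\ast_Y f_m)\to 0$ by bounded convergence; fix $m$ making this $<\eps/2$. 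Next I would truncate $K$: put $K_n\coloneqq K\,1_{\{|K|\leq n\}}\in L^\infty(X\times_Y X)$. The key point is that $K_n$ is still $\Gamma$-invariant, because the Koopman operators of the $\Gamma$-action on $L^0(X\times_Y X)$ are $\ast$-homomorphisms and therefore commute with the Borel functional calculus applied to $|K|$; hence $K_n\ast_Y f_m\in A$. Since $\hsnorm{K-K_n}^2=\E\bigl(|K|^2 1_{\{|K|>n\}}\,\big|\,Y\bigr)\to 0$ a.e.\ by conditional dominated convergence (using $\E(|K|^2|Y)<\infty$ a.e.), Proposition \ref{prop-cnorm-cip}(v) again gives $\cnorm{K\ast_Y f_m-K_n\ast_Y f_m}\leq\hsnorm{K-K_n}\cnorm{f_m}\to 0$ a.e., so $\pmet(K\ast_Y f_m,K_n\ast_Y f_m)\to 0$; fix $n$ making this $<\eps/2$. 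Then $\pmet(K\ast_Y f,K_n\ast_Y f_m)<\eps$, which proves the density claim.

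The only non-routine step is verifying that the truncation $K\mapsto K\,1_{\{|K|\leq n\}}$, needed to move $K$ from $\chs$ into $L^\infty(X\times_Y X)$, preserves $\Gamma$-invariance; this is exactly the functional-calculus observation above. Everything else is bookkeeping with the probabilistic metric $\pmet$, the norm domination of Proposition \ref{prop-cnorm-cip}(v), and the two density facts Proposition \ref{prop-cnorm-cip}(vi)–(vii); in particular neither the $Y$-concatenation structure nor any conditional spectral theory is required for this equivalence.
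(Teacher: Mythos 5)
Your proof is correct and follows essentially the same route as the paper, whose entire proof is the one-line remark that the equivalence follows from Proposition \ref{prop-cnorm-cip}(v)--(vii); you have simply made explicit the two truncation steps (on $f$ via $1_{\{\cnorm{f}\le m\}}$ and on $K$ via $1_{\{|K|\le n\}}$, the latter preserving $\Gamma$-invariance exactly as the paper uses implicitly in Lemma \ref{lem-comp6}) together with the $\pmet$-versus-$L^2$ comparisons from (vi)--(vii).
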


\begin{proof}
The equivalence is a consequence of Proposition \ref{prop-cnorm-cip} (v)-(vii).  
\end{proof}

\begin{lemma}\label{lem-comp4}
Assertion (ii) is equivalent to assertion (ii)' in Theorem \ref{thm-compact}. 
\end{lemma}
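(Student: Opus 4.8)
The claim is that assertion (ii) and assertion (ii)$'$ in Theorem \ref{thm-compact} are equivalent, i.e.\ that $\cltwo$ is the $\pmet$-closure of the union of its finitely generated $\Gamma$-invariant $L^0(Y)$-submodules if and only if $L^2(X)$ is the $L^2$-closure of the union of its closed, $\Gamma$-invariant, finitely generated $L^\infty(Y)$-submodules. The plan is to pass back and forth between the two module structures using the density results from Proposition \ref{prop-cnorm-cip} — in particular parts (vi) (equivalence of $L^2$- and $\pmet$-convergence on $L^2(X)$) and (vii) ($L^\infty(X)$ is $\pmet$-dense in $\cltwo$) — together with the conditional Gram--Schmidt process (Proposition \ref{prop-GSP}) and Lemma \ref{lem-finite-closed} (finitely generated $L^0(Y)$-submodules are $\pmet$-closed).

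\emph{Direction (ii)$'$ $\Rightarrow$ (ii).} Given a closed, $\Gamma$-invariant, finitely generated $L^\infty(Y)$-submodule $\mathcal{N}\subset L^2(X)$ with generators $h_1,\dots,h_n\in L^2(X)\subset\cltwo$, I would let $\mathcal{M}$ be the $L^0(Y)$-submodule of $\cltwo$ generated by the same $h_i$. It is finitely generated and $\Gamma$-invariant (since $\Gamma$-invariance of the generating set is all that is used, and $(T^\gamma)^*$ commutes with multiplication by $L^0(Y)$-scalars — note $(T^\gamma)^*$ restricted to $L^0(Y)$ is the identity because the action is over $Y$). Clearly $\mathcal{N}\subset\mathcal{M}$, so the union over all such $\mathcal{N}$ is contained in the union over all finitely generated $\Gamma$-invariant $L^0(Y)$-submodules of $\cltwo$; taking $\pmet$-closures and invoking Proposition \ref{prop-cnorm-cip}(vi)--(vii) to see that the $L^2$-closure of $\bigcup\mathcal{N}$ being all of $L^2(X)$ forces the $\pmet$-closure of $\bigcup\mathcal{M}$ to be $\pmet$-dense in $L^2(X)$, hence (again by (vii)) in all of $\cltwo$, gives (ii).

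\emph{Direction (ii) $\Rightarrow$ (ii)$'$.} This is the more delicate direction and I expect it to be the main obstacle. Given a finitely generated $\Gamma$-invariant $L^0(Y)$-submodule $\mathcal{M}\subset\cltwo$, one cannot directly intersect with $L^2(X)$ and expect an $L^\infty(Y)$-module. Instead I would apply Proposition \ref{prop-GSP} to obtain a finite $Y$-partition $\mathcal{P}$ and conditional orthonormal families $\mathcal{F}_E$ describing $\mathcal{M}$; then truncate: for each generator $f$ and each $m\ge 1$ set $f^{(m)}=f\,1_{\cnorm{f}\le m}$, which lies in $L^2(X)$ by Proposition \ref{prop-cnorm-cip}(ii) and \eqref{eq-tower}. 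Replacing the coefficients by $L^\infty(Y)$-coefficients supported on sets where $\cdim$ and the relevant norms are bounded, one manufactures, for each $m$, a closed finitely generated $L^\infty(Y)$-submodule $\mathcal{N}_m\subset L^2(X)$ — closedness coming from a bounded-coefficient version of the argument in Lemma \ref{lem-finite-closed} (the estimate \eqref{eq-cauchy} again, now with $L^\infty(Y)$-coefficients and $L^2$-convergence, so that \cite[Lemma 4.6]{Kallenberg2002} applies). One must check $\Gamma$-invariance of $\mathcal{N}_m$: here I would use that $\orb(f)\subset\mathcal{M}$ and that $(T^\gamma)^*\cnorm{f}=\cnorm{(T^\gamma)^*f}$, so the truncation level is $\Gamma$-equivariant; a mild issue is that the truncation set $\{\cnorm{f}\le m\}$ need not be $\Gamma$-invariant, so I would instead truncate using a $\Gamma$-invariant dominating function or pass to $\{\E(\,\cdot\,|\Inv)\le m\}$-type sets, or simply enlarge $\mathcal{N}_m$ to the $L^\infty(Y)$-module generated by $(T^\gamma)^* f^{(m)}$ for all $\gamma$ — but that is infinitely generated, so the cleaner route is to truncate the coefficients rather than the vectors, exploiting the Gram--Schmidt representation: $f=\sum_{E}\sum_{g\in\mathcal{F}_E}a^E_g g$ with $(T^\gamma)^*f=\sum_E\sum_g ((T^\gamma)^*a^E_g)\,(T^\gamma)^* g$, and cut the scalars $a^E_g$ at level $m$. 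Finally, since every $f\in\mathcal{M}$ is the $\pmet$-limit of its truncations $f^{(m)}\in\mathcal{N}_m\subset L^2(X)$, and since $\mathcal{M}$ ranges over a family whose union is $\pmet$-dense in $\cltwo$, Proposition \ref{prop-cnorm-cip}(vi)--(vii) shows $\bigcup_m\bigcup\mathcal{N}_m$ is $L^2$-dense in $L^2(X)$, which is (ii)$'$.

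The main obstacle is thus the bookkeeping in (ii) $\Rightarrow$ (ii)$'$: producing, from an $L^0(Y)$-module, genuinely $L^\infty(Y)$-linear, finitely generated, \emph{closed}, \emph{$\Gamma$-invariant} submodules of $L^2(X)$, reconciling the fact that truncation sets are not $\Gamma$-invariant with the need for $\Gamma$-invariance — handled by truncating coefficients in the conditional-orthonormal basis rather than truncating vectors, which keeps the module finitely generated and lets the $\Gamma$-action act diagonally on the finitely many coefficient functions.
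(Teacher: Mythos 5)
Your direction (ii)$'$ $\Rightarrow$ (ii) is essentially the paper's argument: take the $L^0(Y)$-submodule of $\cltwo$ generated by a finite generating set of $\mathcal{M}'$ and conclude with Proposition \ref{prop-cnorm-cip}(vii). One aside is wrong, though harmless: $(T^\gamma)^*$ restricted to $L^0(Y)$ is not the identity but the Koopman operator $(S^\gamma)^*$ of the base system; what actually saves the argument is only that $(T^\gamma)^*$ maps $L^0(Y)$ into $L^0(Y)$ and maps each generator into $\mathcal{M}'\subset\mathcal{M}$ (the generating set itself need not be $\Gamma$-invariant, only the module).

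The direction (ii) $\Rightarrow$ (ii)$'$ is where there is a genuine gap. After correctly diagnosing that truncating the vectors by $1_{\cnorm{f}\le m}$ destroys $\Gamma$-invariance, you land on ``cut the scalars $a^E_g$ at level $m$''; but the set of combinations with coefficients bounded by $m$ is not an $L^\infty(Y)$-submodule (it is not closed under addition), and you never actually exhibit a closed, finitely generated, $\Gamma$-invariant $L^\infty(Y)$-submodule of $L^2(X)$. The missing observation, which is how the paper closes this direction, is that no truncation is needed at all: the conditional Gram--Schmidt process (Proposition \ref{prop-GSP}) produces generators $g$ with $\cnorm{g}\le 1$, hence $\|g\|_{L^2(X)}\le 1$ by \eqref{eq-tower1}, so these generators already lie in $L^2(X)$, and one simply takes $\mathcal{M}'$ to be the $L^\infty(Y)$-module they generate. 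Its $\Gamma$-invariance is then automatic rather than an obstacle: since $\mathcal{M}$ is $\Gamma$-invariant as an $L^0(Y)$-module, $(T^\gamma)^*g$ expands in the conditional orthonormal system with $L^0(Y)$-coefficients, and because $(T^\gamma)^*$ preserves conditional norms and the system is conditionally orthonormal, those coefficients have modulus at most $1$, i.e.\ they lie in $L^\infty(Y)$. Thus a single finitely generated $L^\infty(Y)$-module per $\mathcal{M}$ suffices (no $m$-indexed family), and density of the union of all such $\mathcal{M}'$ in $L^2(X)$ follows from Proposition \ref{prop-cnorm-cip}(vi)--(vii) as you indicate.
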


\begin{proof}
We first show that (ii) implies (ii)'. 
Let $\mathcal{M}$ be a finitely generated $\Gamma$-invariant $L^0(Y)$-submodule of $\cltwo$. 
For $\mathcal{M}$ choose a finite $Y$-partition $\mathcal{P}$ and for each $E\in\mathcal{P}$ a finite set $\mathcal{F}_E$ satisfying the properties in Proposition \ref{prop-GSP}. 
Let 
\[
\mathcal{L}=\left\{\sum_{E\in\mathcal{P}} g_E|E: g_E\in \mathcal{F}_E \text{ each } E\in\mathcal{P}\right\}.
\]
By property (ii) in Proposition \ref{prop-GSP},  $\cnorm{g}\leq 1$ for each $g\in\mathcal{L}$. 
Thus by \eqref{eq-tower1}, $\|g\|_{L^2(X)}\leq 1$ for all $g\in\mathcal{L}$. 
Let $\mathcal{M}'$ be the $L^\infty(Y)$-submodule of $L^2(X)$ generated by $\mathcal{L}$. 
    Then $\mathcal{M}'$ is $\Gamma$-invariant since $\mathcal{M}$ is $\Gamma$-invariant, and by Proposition \ref{prop-cnorm-cip}(vi) and Lemma \ref{lem-finite-closed}, $\mathcal{M}'$ is also $L^2$-closed. 
It remains to show that the union of all $\mathcal{M}'$, where $\mathcal{M}$ is a finitely generated and $\Gamma$-invariant $L^0(Y)$-submodule of $\cltwo$, is dense in $L^2(X)$. 
But this follows from Proposition \ref{prop-cnorm-cip}(vii). 

Let us show that (ii)' implies (ii). 
Let $\mathcal{M}'$ be a finitely generated $\Gamma$-invariant $L^\infty(Y)$-submodule of $L^2(X)$. Let $\mathcal{L}$ be a finite set of generators of $\mathcal{M}'$.  
Let $\mathcal{M}=\{\sum_{g\in\mathcal{L}} a_g g: a_g\in L^0(Y) \forall g \in\mathcal{L}\}$. 
By construction, $\mathcal{M}$ is a finitely generated $\Gamma$-invariant  $L^0(Y)$-submodule of $\cltwo$. 
By Proposition \ref{prop-cnorm-cip}(vii), the union of all these $\mathcal{M}$ is dense in $\cltwo$ with respect to $\pmet$. 
\end{proof}

\begin{lemma}\label{lem-comp5}
Assertion (iii) is equivalent to assertion (iii)' in Theorem \ref{thm-compact}. 
\end{lemma}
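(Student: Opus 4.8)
The plan is to show each implication by exploiting that the only difference between (iii) and (iii)' is the topology of the ambient space on which the dense set lives: in (iii) the dense set $\mathcal{G}$ is dense in the large space $\cltwo$ with respect to $\pmet$, while in (iii)' the dense set $\mathcal{H}$ is dense in the small space $L^2(X)$ with respect to the $L^2$-metric; the $\eps$-almost-periodicity condition is literally the same in both statements (the conditional norm $\cnorm{\cdot}$ and the finite set $\mathcal{F}$ appearing in both). The key bridging facts are Proposition~\ref{prop-cnorm-cip}(vi), which says $L^2$-convergence and $\pmet$-convergence agree on $L^2(X)$, and Proposition~\ref{prop-cnorm-cip}(vii), which says $L^\infty(X)$ (hence $L^2(X)$, since $L^\infty(X)\subset L^2(X)$ is $\pmet$-dense in $\cltwo$ and $L^2$-dense in $L^2(X)$) is $\pmet$-dense in $\cltwo$.

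First I would prove (iii)' $\Rightarrow$ (iii). Given a dense set $\mathcal{H}\subset L^2(X)$ witnessing (iii)', I claim $\mathcal{G}\coloneqq\mathcal{H}$ (or, if one prefers, the $\pmet$-dense set $\mathcal{H}\cup\{$finite truncations$\}$) works for (iii). Indeed $\mathcal{H}$ is $L^2$-dense in $L^2(X)$, and $L^2(X)$ is $\pmet$-dense in $\cltwo$ by Proposition~\ref{prop-cnorm-cip}(vii); by Proposition~\ref{prop-cnorm-cip}(vi) the $L^2$-density of $\mathcal{H}$ in $L^2(X)$ is the same as $\pmet$-density of $\mathcal{H}$ in $L^2(X)$, so $\mathcal{H}$ is $\pmet$-dense in $\cltwo$. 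The almost-periodicity clause is verbatim the same, so (iii) holds with $\mathcal{G}=\mathcal{H}$.

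For the converse (iii) $\Rightarrow$ (iii)', the subtlety is that a function $f\in\mathcal{G}\subset\cltwo$ need not lie in $L^2(X)$, so I cannot simply take $\mathcal{H}=\mathcal{G}$. Instead, for each $f\in\mathcal{G}$ I would use the truncations $f_m\coloneqq f1_{\cnorm{f}\le m}$ as in the proof of Proposition~\ref{prop-cnorm-cip}(vii): these satisfy $f_m\in L^2(X)$, $\pmet(f,f_m)\to 0$, and crucially $\cnorm{(T^\gamma)^*f_m}=(T^\gamma)^*\cnorm{f}\cdot 1_{(T^\gamma)^*\{\cnorm{f}\le m\}}$, and moreover $(T^\gamma)^*f_m=((T^\gamma)^*f)1_{(T^\gamma)^*\{\cnorm{f}\le m\}}$. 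Using $\Gamma$-invariance of $\cnorm{\cdot}$ (it is a $\Baire(\tilde Y)$-measurable function pulled back, so $T^\gamma$-equivariant via the canonical disintegration intertwining \eqref{eq-intertwining}), the set $\{\cnorm{f}\le m\}$ is... actually it need not be $\Gamma$-invariant, so I must be slightly more careful: given the $\eps$-net $\mathcal{F}$ for $f$, I would check that $\{g1_{\{\cnorm{f}\le m\}}\cup\text{(cutoffs)}: g\in\mathcal{F}\}$, enlarged to include $0$, serves as an $O(\eps)$-net for $\{(T^\gamma)^*f_m:\gamma\in\Gamma\}$, because on the region where $(T^\gamma)^*1_{\{\cnorm f\le m\}}=1$ we have $(T^\gamma)^*f_m=(T^\gamma)^*f$ and the original net applies, while on the complement $(T^\gamma)^*f_m=0$. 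Then $\mathcal{H}\coloneqq\{f_m: f\in\mathcal{G},\ m\in\N\}\subset L^2(X)$ is $\pmet$-dense in $\cltwo$ hence (by Proposition~\ref{prop-cnorm-cip}(vi)--(vii) and a diagonal/approximation step through $L^\infty(X)$) $L^2$-dense in $L^2(X)$, and each element has a finite $\eps$-net in the required sense. The main obstacle is exactly this last bookkeeping point — verifying that truncating by the non-invariant set $\{\cnorm f\le m\}$ does not destroy the finite-net property uniformly in $\gamma$ — and it is handled by the observation that a truncated orbit element is, on each piece of the relevant $Y$-measurable partition into $\{(T^\gamma)^*1_{\{\cnorm f\le m\}}=1\}$ and its complement, either an untruncated orbit element (covered by the old net) or zero (covered by adjoining $0$ to the net), together with Proposition~\ref{prop-cnorm-cip}(ii)(a) which lets one patch these pieces back via a $Y$-countable concatenation without increasing the conditional distance.
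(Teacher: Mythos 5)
Your proposal is correct and follows essentially the same route as the paper: for (iii)\,$\Rightarrow$\,(iii)' one truncates elements of $\mathcal{G}$ by $1_{\cnorm{f}\le m}$ to land in $L^2(X)$, and for the converse one uses Proposition \ref{prop-cnorm-cip}(vi)--(vii) to upgrade $L^2$-density in $L^2(X)$ to $\pmet$-density in $\cltwo$. In fact you treat more carefully than the paper does the one genuinely nontrivial point, namely that truncating by the non-$\Gamma$-invariant set $\{\cnorm{f}\le m\}$ still leaves a finite $\eps$-net for the orbit (adjoining $0$ and cutting off the net elements, and patching along the $Y$-measurable partition), which the paper's proof passes over with a one-line assertion.
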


\begin{proof}
We show that (iii) implies (iii)'. 
By assumption, there is a dense set $\mathcal{G}$ in $\cltwo$ such that the orbit of every $g\in \mathcal{G}$ has the conditional total boundedness property in $\cltwo$. We will construct a dense set $\mathcal{H}$ in $L^2(X)$ such that the orbit of every $f\in \mathcal{H}$ has the same conditional total boundedness property but within $L^2(X)$. 

Let $\eps>0$ and let $0<\delta<1$.  
Let $f\in L^2(X)$ and let $g\in \mathcal{G}$ be such that $\pmet(f,g)<\frac{\delta\eps}{2}$. 
Then the measure of $B= \{\cnorm{f-g}<\frac{\delta}{2}\}$ is greater than $1-\frac{\eps}{2}$. 
There is a finite subset $\mathcal{F}$ of $\cltwo$ such that for all $\gamma\in \Gamma$,
    \[
    \min_{h\in\mathcal{F}} \cnorm{(T^\gamma)^*g - h}<\frac{\delta}{2}. 
    \]
By the conditional triangle inequality, for all $\gamma\in\Gamma$,
    \[
    \min_{h\in\mathcal{F}\cup\{0\}} \cnorm{(T^\gamma)^*(f1_B) - h}<\delta. 
    \] 
Suppose $\mathcal{F}$ has $n$ elements. By monotone convergence, for each $h\in\mathcal{F}$ pick a measurable subset $A_h$ of $Y$ such that $\nu(A_h)\geq 1-\frac{\eps}{2n}$ and $h1_{A_h}\in L^2(X)$. Let $A=\bigcap_{i=1}^n A_h$. 
Then $\nu(A)\geq 1- \frac{\eps}{2}$.  
By the conditional triangle inequality, 
\[
 \min_{h\in\mathcal{F}\cup\{0\}} \cnorm{(T^\gamma)^*(f1_B) - h1_{A_h}}<\delta \quad \text{on } A. 
\]
By the completeness of the probability algebra $Y$ and the countable chain condition (cf. Remark \ref{rem-ccc}), we have 
\[
A^\ast\coloneqq \bigcup_{\gamma\in\Gamma} T^\gamma(A) = \bigcup_{n\in\mathbb{N}} T^{\gamma_n}(A)
\]
for some countable set $\{\gamma_n\}\subset \Gamma$. Using a trick of Furstenberg (cf. proof of \cite[Theorem 6.13, $C_1\Rightarrow C_2$]{furstenberg2014recurrence}) by modifying the $h1_{A_h}$ (while keeping them in $L^2(X)$), we reach that 
\[
 \min_{\tilde{h}\in\tilde{\mathcal{F}}\cup\{0\}} \cnorm{(T^\gamma)^*(f1_B) - \tilde{h}}<\delta \quad \text{on } A^*,  
\]
where $\tilde{\mathcal{F}}$ collects the modified $h1_{A_h}$ for all $h\in\mathcal{F}$. 
Since $A^*$ is $\Gamma$-invariant, we obtain 
\[
 \min_{\tilde{h}\in\tilde{\mathcal{F}}\cup\{0\}} \cnorm{(T^\gamma)^*(f1_{B\cap A^*}) - \tilde{h}1_{A^*}}<\delta
\]
globally and the measure of $B\cap A^*$ is at least $1-\eps$. By Proposition \ref{prop-cnorm-cip}(vii), the collection $\mathcal{H}$ of  $f1_{B\cap A^*}$ as constructed above starting with any $f\in L^2(X)$ is dense in $L^2(X)$. This proves that (iii) implies (iii)'. 

The opposite direction is an immediate consequence of Proposition \ref{prop-cnorm-cip}(vi) and (vii). 
\end{proof}

We conclude the proof of Theorem \ref{thm-compact} by establishing:
\begin{lemma}\label{lem-comp6}
Assertion (iii)' implies assertion (i)' in Theorem \ref{thm-compact}. 
\end{lemma}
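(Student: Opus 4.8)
The plan is to run the classical Furstenberg argument (see \cite[\S 6]{furstenberg2014recurrence}) showing that the almost-periodicity condition (iii)' produces a rich supply of invariant Hilbert--Schmidt kernels whose ranges exhaust $L^2(X)$. Fix $f\in\mathcal{H}$ and $\eps>0$, and let $\mathcal{F}=\{g_1,\dots,g_N\}\subset L^2(X)$ be the finite set guaranteed by (iii)', so that $\min_j\cnorm{(T^\gamma)^*f-g_j}<\eps$ for every $\gamma\in\Gamma$. The first step is to associate to the orbit $\orb(f)$ an invariant kernel: for $h\in L^2(X)$ one considers the rank-one conditional Hilbert--Schmidt operator with kernel $h\otimes\bar h$, and then averages $(T^\gamma)^*h\otimes\overline{(T^\gamma)^*h} = (T^\gamma\times T^\gamma)^*(h\otimes\bar h)$ over $\gamma\in\Gamma$. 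Since $\Gamma$ is a bare group with no amenability available, the averaging must be performed not by a Følner average but via the Alaoglu--Birkhoff ergodic theorem (Theorem \ref{ab-thm}) applied to the Koopman action of $\Gamma$ on $L^2(X\times_Y X)$: the projection $\E(\cdot\,|\,\mathrm{Inv}(X\times_Y X))$ of $f\otimes\bar f$ onto the invariant vectors is the unique element of minimal norm in the closed convex hull of $\orb(f\otimes\bar f)$, hence is a $\Gamma$-invariant element $K\in L^2(X\times_Y X)$, and in fact lies in $\chs$ because each orbit element does and the conditional norm is controlled.

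The second step is to show that $K\ast_Y f$ is close to $f$ in $L^2$ and thereby conclude. The key computation is that $\cip{K\ast_Y f}{f}=\E(K\ast_Y f\cdot\bar f\,|\,Y)$ can be expanded, using the definition of $K$ as a convex-hull limit of orbit averages $\sum_i c_i\,(T^{\gamma_i})^*f\otimes\overline{(T^{\gamma_i})^*f}$ and Proposition \ref{prop-cnorm-cip}(v), into $\sum_i c_i\,\bigl|\cip{(T^{\gamma_i})^*f}{f}\bigr|^2$-type terms (after passing to the limit), which one bounds below by a positive quantity because the individual inner products $\cip{(T^{\gamma_i})^*f}{f}$ cannot all be small: by almost periodicity and the pigeonhole principle across the finite set $\mathcal{F}$, a positive-measure set of $Y$ must have $\cnorm{(T^\gamma)^*f}$ not much larger than one of the $\cnorm{g_j}$, forcing $\cip{(T^\gamma)^*f}{f}$ to stay bounded away from $0$ on a controlled portion of $Y$ for all $\gamma$. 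Taking $\eps$ small, this shows that $\E(K\ast_Y f\,\bar f\,|\,Y)$ is bounded below on most of $Y$, so that $f$ is within a small $L^2$-distance of the range of the invariant kernel $K$; more precisely one obtains, after a truncation argument, that the orthogonal projection of $f$ onto the $L^2$-closure of $\bigcup\{K\ast_Y(L^2(X)):K\in L^\infty(X\times_Y X)\ \Gamma\text{-invariant}\}$ is at distance $O(\eps)$ from $f$. One also replaces the $L^2$-kernel $K$ by a bounded one by truncating $K$ pointwise and using that $K\ast_Y$ depends continuously on $K$ in $\hsnorm{\cdot}$ (Proposition \ref{prop-cnorm-cip}(v), \eqref{eq-normdom}); this keeps us inside $L^\infty(X\times_Y X)$ as required in (i)'. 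Finally, since $\mathcal{H}$ is dense in $L^2(X)$ and $\eps>0$ was arbitrary, the invariant-kernel range is dense, which is precisely (i)'.

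The step I expect to be the main obstacle is making the lower bound on $\cip{(T^\gamma)^*f}{f}$ uniform and conditional: classically, over an ergodic base one argues with scalars, but here the bound must hold on the probability algebra $Y$ fiberwise, and the pigeonhole over the finite set $\mathcal{F}$ has to be executed with $L^0(Y)$-valued quantities (partitioning $Y$ according to which $g_j$ realizes the near-minimum, using the countable chain condition to reduce to a countable $Y$-partition as in the proof of Proposition \ref{prop-cnorm-cip}(iii)). One must then verify that the Alaoglu--Birkhoff average $K$ is genuinely in $\chs$ and not merely in $L^2(X\times_Y X)$ — this uses that $\hsnorm{(T^\gamma)^*f\otimes\overline{(T^\gamma)^*f}}=\cnorm{(T^\gamma)^*f}^2=(T^\gamma)^*\cnorm{f}^2$ is an $L^0(Y)$-bound stable under the convex averaging and under $\pmeths$-limits — and that the truncation to $L^\infty$ does not destroy the lower bound. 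Once these conditional bookkeeping points are handled, the rest is the classical estimate, and the density conclusion follows immediately.
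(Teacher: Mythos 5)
Your overall architecture (Alaoglu--Birkhoff average $K$ of $f\otimes\bar f$, truncation to land in $L^\infty(X\times_Y X)$, exploitation of the finite set $\mathcal{F}$ from (iii)') matches the paper's ingredients, but the central step of your direct approach does not hold up. You claim that $\cip{(T^\gamma)^*f}{f}$ stays bounded away from $0$ on a controlled portion of $Y$ because $\cnorm{(T^\gamma)^*f}$ is comparable to some $\cnorm{g_j}$. This is a non sequitur: $\cnorm{(T^\gamma)^*f}=(T^\gamma)^*\cnorm{f}$ automatically, so the norm comparison carries no information, and two functions of comparable conditional norm can be conditionally orthogonal. Indeed the autocorrelation $\cip{(T^\gamma)^*f}{f}$ can vanish identically for particular $\gamma$ even in a compact extension (e.g.\ $f=\chi_1+\chi_2$ a sum of two characters on a finite group rotation, with $\gamma$ chosen so that $\chi_1(\gamma)=-\chi_2(\gamma)$). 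Consequently the convex combinations $\sum_i c_i\,\|\cip{(T^{\gamma_i})^*f}{f}\|^2$ are not bounded below, and your lower bound on $\langle K\ast_Y\bar f,f\rangle$ is unproven. A second gap: even granting a correlation lower bound, "$f$ correlates with $K\ast_Y\bar f$" does not yield "$f$ is within $O(\eps)$ of the closed span of the invariant-kernel ranges"; the component of $f$ orthogonal to that span is invisible to a single correlation, so you would have to rerun the whole argument on the residual --- which is exactly the duality formulation.

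The paper avoids both problems by arguing in contrapositive form. Assume $f\in\mathcal{H}$ is orthogonal to every $K'\ast_Y g$ with $K'$ invariant and bounded; applying this to the truncations $K_M=K1_{|K|\leq M}$ of the Alaoglu--Birkhoff average $K$ of $f\otimes\bar f$ and unfolding via the disintegration, one finds $f\otimes\bar f\perp K_M$, hence (by invariance of $K_M$) $K\perp K_M$ for all $M$, hence $K=0$. Then, since $0$ lies in the closed convex hull of $\orb(f\otimes\bar f)$, there are convex combinations $K_n$ with $\|K_n\|_{L^2(X\times_Y X)}\to 0$; pairing against $g\otimes\bar g$ for the finitely many $g\in\mathcal{F}$ produces a single $\gamma$ for which $\cip{(T^\gamma)^*f}{g}$ is small for \emph{every} $g\in\mathcal{F}$ simultaneously (off a set of small measure). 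This is the correct use of the net: since $(T^\gamma)^*f$ is $\eps$-close to \emph{some} $g\in\mathcal{F}$ by (iii)', smallness of all these cross-correlations forces $\cnorm{(T^\gamma)^*f}$, and hence $\cnorm{f}$, to be small, so $f=0$. I recommend you recast your argument in this orthogonal-complement form; as written, the two inferences flagged above are genuine gaps.
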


\begin{proof}
We show that if $f\in \mathcal{H}$ is orthogonal to all functions of the form $K\ast_Y g$ where $K$ ranges over $\Gamma$-invariant functions in $L^\infty(X\times_Y X)$ and $g$ ranges over $L^2(X)$, then $f=0$. 
Let $K$ be the unique element of minimal norm in the closed convex hull of $\orb(f\otimes \bar f)$ in  $L^2(X\times_Y X)$ which is $\Gamma$-invariant by Theorem \ref{ab-thm}. 
Set  $K_M=K1_{|K|\leq M}$ for $M\geq 0$. 
By hypothesis, $f$ is orthogonal to $K_M\ast_Y \bar f$. 
We can rewrite this, using the disintegration $(\mu_y)_{y\in\tilde{Y}}$ of $\tilde{\mu}$, as
\begin{align*}
0&=\int_{\tilde{X}}  f(x) \left(\int_{\tilde{X}} K_M(x,x') \bar f(x') d\mu_{\tilde{\pi}(x)}(x')\right) d\tilde{\mu}(x) \\ 
&= \int_{\tilde{Y}} \int_{\tilde{X}} f(x) \left(\int_{\tilde{X}} K_M(x,x')  \bar f(x') d\mu_y(x')\right) d\mu_y(x) d\tilde{\nu}(y) \\
&= \int_{\tilde{X}\times\tilde{X}} f\otimes  \bar f K_M d\tilde\mu\times_{\tilde{Y}} \tilde\mu. 
\end{align*}
Thus, $ f\otimes \bar f$ is orthogonal to $K_M$ in $L^2(X\times_Y X)$. 
As $K_M$ is $\Gamma$-invariant, $(T^\gamma\times T^\gamma)^* f\otimes \bar f$ is orthogonal to $K_M$ for all $\gamma\in \Gamma$ as well. 
Thus $K$ must be orthogonal to all $K_M$, which implies that $K_M=0$ for all $M\geq 0$, and therefore also $K=0$.  

Now choose a finite set $\mathcal{F}$ such that the property in (iii)' is satisfied for $f$ and an arbitrary $\eps>0$.  
Let $(K_n)$ be a sequence in the convex hull of $\orb(f\otimes \bar f)$ such that $\lim \|K_n\|_{L^2(X\times_Y X)}\to 0$.  
By Cauchy--Schwarz, we also have $\langle K_n, g\otimes \bar g\rangle_{L^2(X\times_Y X)}\to 0$ for all $g\in\mathcal{F}$.  
Expanding the inner product and choosing $n$ large enough, we can always obtain a $\gamma\in\Gamma$ such that 
\[
\sum_{g\in\mathcal{F}} \|\cip{(T^{\gamma})^*f}{g}\|_{L^2(Y)}
\]
is arbitrarily small. Thus for any $\delta>0$ we can find a set $E\in\Baire(\tilde{Y})$ with $\tilde\nu(E^c)<\delta$ such that 
\[
|\cip{(T^{\gamma})^*f}{g}| <\eps \quad \text{ on } E
\]
for all $g\in\mathcal{F}$ for some $\gamma$. 
By (iii)', we also have for all $\gamma\in\Gamma$
\[
\min_{g\in\mathcal{F}} \cnorm{(T^{\gamma})^*f -g} <\eps. 
\] 
By the triangle inequality, 
\[
\cnorm{f} 1_{T^{-\gamma}(E)}=\cnorm{(T^\gamma)^*f} 1_{E}< 3 \eps
\]
for some $\gamma$. 
Since $\eps,\delta$ were chosen arbitrarily it follows that $f=0$, and this finishes the proof of Theorem \ref{thm-compact}. 
\end{proof}

\section{The uncountable Mackey-Zimmer theorem and isometric extensions}\label{sec-isometric}

Under the hypothesis of ergodicity, relatively compact extensions are isomorphic to homogeneous skew-product extensions.   
This characterization is a relative version of the Halmos-von Neumann theorem (proved for $\Z$-actions by Halmos and von Neumann \cite{halmosvonneumann}, then generalized by Mackey \cite{mackey-hvn} to second-countable locally compact group actions, see \cite[Theorem 17.5]{EFHN} for an uncountable Halmos-von Neumann theorem for ergodic Markov semi-group actions). 
The relative version of the Halmos-von Neumann theorem was obtained by Zimmer \cite{zimmer1976ergodic} for actions of second-countable locally compact groups on standard Borel spaces. In this section, we extend Zimmer's theorem to systems of uncountable complexity. %Our proof is an adaptation of a proof in \cite[\S 9.2]{glasner2015ergodic}.  
The key ingredient in the proof is the Mackey-Zimmer theorem classifying ergodic homogeneous extensions. We use the uncountable Mackey-Zimmer theorem \cite{jt20} established by Tao and the author. We point out that similar results to the ones in this section were obtained by Ellis \cite[\S 5]{ellis}, though with different methods. In short, the methods of \cite{ellis} rely on topological dynamics and the theory of Banach bundles of spaces of continuous functions which is akin to the methods used in \cite{ehk}, whereas our methods are ergodic-theoretic and based off measure theory and topos theory. 

We start by recalling the uncountable Mackey-Zimmer theorem \cite{jt20}. 
Let $K$ be a compact Hausdorff group and $(Y,\nu)$ a $\OpProbAlg$-space. 
We denote by $\Cond_Y(K)$ the set of all $\AbsMes$-morphisms from $Y$ to $\Baire(K)$. 
%We regard the elements of $\Cond_Y(K)$ as conditional elements of $K$ over the base $Y$ as $\Cond_Y(K)$ can be shown to be a conditional set  \cite{drapeau2016algebra}.  
Let $\CondTilde_Y(K)$ denote the set of continuous functions from $\tilde{Y}$ to $K$. 
It is a remarkable property of the canonical model that we have the isomorphism \begin{equation}\label{eq-canrep}\Cond_Y(K)\equiv \CondTilde_Y(K),\end{equation} 
see \cite[Proposition 7.9]{jt-foundational} for a proof (here isomorphism is understood in the sense of sets, that is, there is a bijection between the sets $\Cond_Y(K)$ and $\CondTilde_Y(K)$). 
The set $\CondTilde_T(K)$ has the structure of a group defining the group law in a pointwise way. 
Therefore, also $\Cond_Y(K)$ has a group structure. 
We introduce $\OpProbAlgG$-cocycles, homogeneous skew-products and extensions. 

\begin{definition}\label{def-skewprod}
Let $\mathcal{Y}=(Y,\nu,S)$ be a $\OpProbAlgG$-system, $K$ a compact Hausdorff group, and $L\leq K$ a closed subgroup. 
\begin{itemize}
    \item[(i)] A $K$-valued \emph{$\OpProbAlgG$-cocycle} is a family $\rho=(\rho_\gamma)_{\gamma\in\Gamma}$ of elements $\rho_\gamma\in \Cond_Y(K)$ satisfying the $\OpProbAlgG$-cocycle property 
    $$\rho_{\gamma\gamma'}=(\rho_\gamma\circ S^{\gamma'})\rho_{\gamma'}$$
    for all $\gamma,\gamma'\in\Gamma$. 
    Any $K$-valued $\OpProbAlgG$-cocycle $\rho=(\rho_\gamma)_{\gamma\in\Gamma}$ has a canonical representation $\tilde{\rho}=(\tilde{\rho}_\gamma)_{\gamma\in\Gamma}$ where $\tilde{\rho}_\gamma\in \CondTilde_Y(K)$ is defined by the isomorphism \eqref{eq-canrep} such that  $\tilde{\rho}=(\tilde{\rho}_\gamma)_{\gamma\in\Gamma}$ is a $\CHProbG$-cocycle on $\tilde{Y}$. 
    \item[(ii)] Let $\rho=(\rho_\gamma)_{\gamma\in\Gamma}$ be a $\OpProbAlgG$-cocycle with canonical representation $\tilde{\rho}=(\tilde{\rho}_\gamma)_{\gamma\in\Gamma}$. We denote by $\tilde{Y}\rtimes_{\tilde{\rho}} K/L$ the \emph{$\CHProbG$-homogeneous skew-product} defined by the following data: 
    \begin{itemize}
        \item The homogeneous space $K/L$ is equipped with Baire $\sigma$-algebra and Haar measure. 
        \item We equip $\tilde{Y}\times K/L$ with the product Baire probability measure.  
        \item The $\CHProbG$-action $T:\Gamma\to \Aut(\tilde{Y}\times K/L)$ is defined by $$T^\gamma(y,kL)=(S^\gamma y,\tilde{\rho}_\gamma(y) kL)$$
        for all $(y,kL)\in \tilde{Y}\times K/L$ and $\gamma\in\Gamma$. 
    \end{itemize}
    We define the \emph{$\OpProbAlgG$-homogeneous skew-product} $\mathcal{Y}\rtimes_\rho K/L$ to be the $\OpProbAlgG$-system $$\mathtt{AlgAbs}(\tilde{Y}\rtimes_{\tilde{\rho}} K/L).$$  
    \item[(iii)]    A \emph{$\OpProbAlgG$-homogeneous extension} of $\mathcal{Y}$ by $K/L$ is a tuple $(\mathcal{X},\pi,\theta,\rho)$ where $\mathcal{X}=(X,\mu,T)$ is a $\OpProbAlgG$-system, $\pi:(X,\mu,T)\to (Y,\nu,S)$ is a $\OpProbAlgG$-extension, $\theta\in \Cond_X(K/L)$ is a vertical coordinate such that $\pi,\theta$ jointly generate the $\SigmaAlg$-algebra $X$, and $\rho=(\rho_\gamma)_{\gamma\in\Gamma}$ is a $\OpProbAlgG$-cocycle such that $$\theta\circ T^\gamma = (\rho_\gamma\circ \pi)\theta$$
    for all $\gamma\in \Gamma$ using the natural action of $\Cond_X(K)$ on $\Cond_X(K/L)$. 
\end{itemize}
\end{definition}

\begin{theorem}[Uncountable Mackey-Zimmer]\label{thm-mz} \cite{jt20} 
Let $\mathcal{Y}=(Y,\nu,S)$ be an ergodic $\OpProbAlgG$-system and $K$ be a compact Haudorff group.
 Every ergodic $\OpProbAlgG$-homogeneous extension $\mathcal{X}$ of $\mathcal{Y}$ by $K/L$ for some compact subgroup $L$ of $K$ is isomorphic in $\OpProbAlgG$ to a $\OpProbAlgG$-homogeneous skew-product $\mathcal{Y} \rtimes_\rho H/M$ for some compact subgroup $H$ of $K$, some compact subgroup $M$ of $H$, and some $H$-valued $\OpProbAlgG$-cocycle $\rho$.  
\end{theorem}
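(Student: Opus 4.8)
The plan is to follow the classical strategy of Zimmer's proof of the relative Halmos--von Neumann theorem, transported to the pointfree uncountable setting by means of the canonical model and conditional analysis; this program is carried out in full in \cite{jt20}, and I sketch the main steps here.

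First I would pass to the cocycle picture. By hypothesis $\mathcal{X}$ carries a vertical coordinate $\theta\in\Cond_X(K/L)$ generating $X$ over $Y$, together with a $\OpProbAlgG$-cocycle $\rho$ satisfying $\theta\circ T^\gamma=(\rho_\gamma\circ\pi)\theta$. It is convenient to lift the problem to the \emph{group extension} $\mathcal{Y}\rtimes_\rho K$, in which $\Gamma$ acts on $Y\times K$ by $T^\gamma(y,k)=(S^\gamma y,\rho_\gamma(y)k)$; the original extension $\mathcal{X}$ is then, up to isomorphism, the factor of $\mathcal{Y}\rtimes_\rho K$ obtained by quotienting out the residual right $L$-action, which commutes with $\Gamma$. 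Thus it suffices to analyse the $\Gamma$-invariant structure of $\mathcal{Y}\rtimes_\rho K$ and to locate $\mathcal{X}$ inside it.

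Second I would construct the Mackey range of $\rho$ and reduce. The residual right $K$-action on $\mathcal{Y}\rtimes_\rho K$ commutes with $\Gamma$ and therefore descends to the invariant factor $\Inv(\mathcal{Y}\rtimes_\rho K)$; since $\mathcal{Y}$ is ergodic, a conditional-analytic computation of relative invariants — using the Alaoglu--Birkhoff theorem (Theorem \ref{ab-thm}) in place of a pointwise ergodic theorem, and the canonical disintegration (Theorem \ref{thm-disintegration}) to evaluate the relevant expectations — identifies $\Inv(\mathcal{Y}\rtimes_\rho K)$, equipped with its residual $K$-action, with a homogeneous $K$-space $K/H$, the closed subgroup $H\leq K$ being canonical up to conjugacy; this is the Mackey range of $\rho$. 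One then invokes a cocycle reduction lemma: $\rho$ is $\OpProbAlgG$-cohomologous to a cocycle $\rho'$ with $\rho'_\gamma\in\Cond_Y(H)$ for all $\gamma$, say $\rho'_\gamma=(b\circ S^\gamma)^{-1}\rho_\gamma b$ for some $b\in\Cond_Y(K)$, and $\mathcal{Y}\rtimes_{\rho'}H$ is ergodic. Conjugating $\theta$ by $b$ then exhibits $\mathcal{X}$ as the factor of the ergodic extension $\mathcal{Y}\rtimes_{\rho'}H$ corresponding to a homogeneous space $H/M$, where $M\leq H$ is the closed stabiliser produced by the reduction; because $\theta$ generates $X$ over $Y$, this factor map is an isomorphism, giving $\mathcal{X}\cong\mathcal{Y}\rtimes_{\rho'}H/M$ as claimed.

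The main obstacle is the cocycle reduction lemma. In the countable separable setting it rests on measurable selection theorems and on second countability of $K$ together with countability of $\Gamma$, none of which is available here. The resolution, as in \cite{jt20}, is to work on the canonical Stonean model, where by \eqref{eq-canrep} the external cocycle $\rho$ is represented by genuinely continuous $K$-valued functions on $\tilde{Y}$; the relevant cohomological equations can then be solved using Baire-category arguments and the structure theory of cocycles into compact Hausdorff groups developed there, which is closely related to the machinery behind the uncountable Moore--Schmidt theorem \cite{jt19}. The ergodicity of the reduced extension and the canonicity up to conjugacy of the Mackey range likewise follow from the disintegration and conditional-analytic tools recalled in Section \ref{sec-prelim}.
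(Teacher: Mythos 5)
The paper does not actually prove Theorem \ref{thm-mz}: it is imported verbatim from \cite{jt20}, so there is no in-paper argument to compare your sketch against. Your write-up likewise defers every substantive step (the identification of the Mackey range, the cocycle reduction without measurable selection, the role of the canonical model) to \cite{jt20}, so at the level of this paper your treatment coincides with the author's, namely a citation. As a standalone proof, however, it is a strategy outline rather than an argument, and one of its opening moves would not survive being fleshed out.

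Concretely, you assert that $\mathcal{X}$ is, up to isomorphism, the factor of the group skew-product $\mathcal{Y}\rtimes_\rho K$ (carrying the product of $\nu$ with Haar measure) obtained by quotienting out the residual right $L$-action. By Definition \ref{def-skewprod}(iii), the data $(\pi,\theta)$ only identify $X$ with the $\sigma$-algebra generated by $Y$ and the pullbacks of $\Baire(K/L)$ under $\theta$, equipped with \emph{some} $\Gamma$-invariant measure projecting to $\nu$; there is no reason for this measure to be the product of $\nu$ with Haar measure on $K/L$, and in general it is not (e.g.\ when $\theta$ essentially takes values in a single orbit of a proper closed subgroup, the fibre measures are singular with respect to Haar measure on $K/L$). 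Classifying this invariant measure --- showing that after a cocycle reduction it becomes $\nu$ times the Haar measure of an orbit $H/M$ --- is precisely the content of the Mackey--Zimmer theorem, so starting from the product-measure skew product $\mathcal{Y}\rtimes_\rho K$ assumes what is to be proved. The remainder of your outline (the Mackey range as the invariant factor of the group extension with its residual $K$-action, reduction of $\rho$ to an $H$-valued cocycle via the canonical model in place of measurable selection) has the right shape and matches the strategy of \cite{jt20}, but all of the genuinely uncountable difficulties live in that reference and are not addressed here.
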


We apply the uncountable Mackey-Zimmer theorem to establish a geometric characterization of relatively compact extensions: 

\begin{theorem}\label{thm-isometric}
If $\pi:(X,\mu,T)\to (Y,\nu,S)$ is an ergodic relatively compact $\OpProbAlgG$-extension, then $(X,\mu,T)$ is $\OpProbAlgG$-isomorphic to a homogeneous skew-product extension $\mathcal{Y}\rtimes_\rho H/M$ for a compact Hausdorff group $H$, a closed subgroup $M$ of $H$, and a $H$-valued $\OpProbAlgG$-cocycle $\rho$.  
\end{theorem}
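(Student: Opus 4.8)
The plan is to build the compact Hausdorff group $H$ out of the relatively compact structure and then invoke the uncountable Mackey--Zimmer theorem (Theorem \ref{thm-mz}). First I would use Theorem \ref{thm-compact}(ii) to write $L^2(X)$ as the $L^2$-closure of the union of its closed, $\Gamma$-invariant, finitely generated $L^\infty(Y)$-submodules $\mathcal{M}_i$, and (by ergodicity of $\mathcal{Y}$, hence rigidity of $\Baire(\tilde Y)$ up to null sets) arrange that each $\mathcal{M}_i$ has constant conditional dimension via Proposition \ref{prop-GSP}. On each $\mathcal{M}_i$ the Koopman operators $(T^\gamma)^*$ act by $L^\infty(Y)$-linear, conditional-norm-preserving maps, so relative to the conditional orthonormal basis supplied by Proposition \ref{prop-GSP} they are implemented by a $\Cond_Y(U(d_i))$-valued $\OpProbAlgG$-cocycle, where $d_i=\cdim(\mathcal{M}_i)$; passing to canonical models and using the isomorphism \eqref{eq-canrep} realizes these as genuine unitary-matrix-valued cocycles on $\tilde Y$. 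Taking the closure of the group generated by the (essential) ranges of all these cocycles inside the product of the $U(d_i)$, one obtains a compact Hausdorff group together with a cocycle $\rho$ into it, and $X$ is generated over $Y$ by the coordinates of the $\mathcal{M}_i$, hence is a $\OpProbAlgG$-homogeneous extension of $\mathcal{Y}$ in the sense of Definition \ref{def-skewprod}(iii) (with $K$ that compact group and $L$ trivial, or a suitable closed subgroup if one prefers to cut down to the orbit of a base point).

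Once $\mathcal{X}$ is exhibited as an ergodic $\OpProbAlgG$-homogeneous extension of the ergodic system $\mathcal{Y}$ by $K/L$, Theorem \ref{thm-mz} applies verbatim and yields the $\OpProbAlgG$-isomorphism $\mathcal{X}\cong \mathcal{Y}\rtimes_\rho H/M$ for compact Hausdorff $H$, closed $M\le H$, and an $H$-valued $\OpProbAlgG$-cocycle $\rho$, which is exactly the conclusion. The routine parts are the verification of the cocycle identity (it follows from $(T^{\gamma\gamma'})^*=(T^{\gamma'})^*(T^\gamma)^*$ together with $L^\infty(Y)$-linearity and the intertwining property $(T^\gamma)^*\circ\pi^*=\pi^*\circ(S^\gamma)^*$), the fact that the assembled coordinates generate $X$ as a $\SigmaAlg$-algebra (which follows from density of $\bigcup_i\mathcal{M}_i$ in $L^2(X)$ and Proposition \ref{prop-cnorm-cip}(vii)), and the compatibility with canonical models via \eqref{eq-canrep} and the strong Lusin property \eqref{eq-lusin}.

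The main obstacle I anticipate is the organization of the ``assembly'' step: each finitely generated invariant module $\mathcal{M}_i$ only gives a cocycle into a group $U(d_i)$ that depends on $i$, and a priori the conditional dimensions are merely $\Baire(\tilde Y)$-measurable; one must exploit ergodicity of $\mathcal{Y}$ to make them constant, choose a coherent system of conditional orthonormal bases (using the conditional Gram--Schmidt process of Proposition \ref{prop-GSP} and closedness from Lemma \ref{lem-finite-closed}), and then take an inverse/projective limit over the directed family of such modules so that the resulting structure group $K$ is a single compact Hausdorff group rather than an increasing union. The technical care needed to make this limit well defined --- compatibly with the cocycles and with the $\sigma$-algebra generated --- and to check that the limiting extension is still a \emph{homogeneous} extension in the precise sense of Definition \ref{def-skewprod}(iii) (so that Theorem \ref{thm-mz} is applicable) is where the real work lies; everything after that is a direct citation of the uncountable Mackey--Zimmer theorem.
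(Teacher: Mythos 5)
Your overall strategy coincides with the paper's: extract unitary-matrix-valued cocycles from the finitely generated $\Gamma$-invariant $L^\infty(Y)$-submodules via the conditional Gram--Schmidt process, use ergodicity to make the conditional dimensions constant, package everything into a single compact Hausdorff group so that $\mathcal{X}$ becomes an ergodic $\OpProbAlgG$-homogeneous extension of $\mathcal{Y}$, and quote Theorem \ref{thm-mz}. The gap sits exactly at the step you yourself flag as ``where the real work lies'', and the two resolutions you sketch for it would not go through as stated. Taking $L$ trivial cannot work: a module $\mathcal{M}_\alpha$ of conditional dimension $d_\alpha$ does not furnish any natural $\mathbb{U}(d_\alpha)$-valued vertical coordinate on $X$; what it furnishes is the $d_\alpha$-tuple $F^\alpha$ of conditional orthonormal generators, which after normalization is a coordinate with values in the unit sphere $\mathbb{S}^{2d_\alpha-1}=\mathbb{U}(d_\alpha)/\mathbb{U}(d_\alpha-1)$, i.e.\ in a genuinely nontrivial homogeneous space. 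The paper accordingly takes $K=\prod_\alpha\mathbb{U}(d_\alpha)$ and $L=\prod_\alpha\mathbb{U}(d_\alpha-1)$, with vertical coordinate $\theta=(\theta_\alpha)_\alpha$ where $\theta_\alpha=((\tfrac{1}{\sqrt{d_\alpha}}F^\alpha)^*)^\op$, and the equivariance $\theta\circ T^\gamma=(\rho_\gamma\circ\pi)\theta$ required by Definition \ref{def-skewprod}(iii) is precisely the basis-change identity $F^\alpha=\Lambda^\alpha_\gamma F^\alpha_\gamma$.

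Your other two worries dissolve rather than needing to be overcome. No inverse or projective limit over a directed family, and no coherence between the conditional bases of different modules, is required: one takes the product over \emph{all} the modules simultaneously, and Lemma \ref{lem-prodquot} identifies $\Cond_Y$ of a product of groups (and of a product of homogeneous spaces) with the corresponding product of $\Cond_Y$'s, so $\theta\in\Cond_X(K/L)$ and $\rho=(\rho^\alpha_\gamma)_\alpha$ is a $K$-valued cocycle with no further argument. Likewise, cutting $K$ down to the closed subgroup generated by the essential ranges of the cocycles is superfluous: that reduction is exactly what Theorem \ref{thm-mz} performs, producing the compact subgroups $H\leq K$ and $M\leq H$ of the conclusion. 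Once $(\theta,\rho)$ is in place and one checks that $\pi,\theta$ jointly generate $X$ (which, as you say, follows from the density of $\bigcup_\alpha\mathcal{M}_\alpha$ in $L^2(X)$), the remainder is indeed a direct citation of the uncountable Mackey--Zimmer theorem.
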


In the proof, we need some properties about cocycles with values in quotient and product spaces which are recorded in the following lemma. 

\begin{lemma}\label{lem-prodquot}
Let $A$ be an index set and for every $\alpha\in A$, let $K_\alpha$ be a compact Hausdorff group and $L_\alpha$ a closed subgroup of  $K_\alpha$. Let $(Y,\nu)$ be a $\OpProbAlg$-space.
Then the following identities hold:
    \begin{align*}
\Cond_Y(\prod_{\alpha\in A} K_\alpha)&=\prod_{\alpha\in A} \Cond_Y(K_\alpha)\\
\Cond_Y(\prod_{\alpha\in A} (K_\alpha/L_\alpha))&=\Cond_Y(\prod_{\alpha\in A} K_\alpha/\prod_{\alpha\in A} L_\alpha)
\end{align*}
\end{lemma}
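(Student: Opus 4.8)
The plan is to verify both identities by unwinding the definition $\Cond_Y(\cdot) = \{\text{$\AbsMes$-morphisms } Y \to \Baire(\cdot)\}$ and using the canonical representation isomorphism \eqref{eq-canrep}, which lets me replace each $\Cond_Y(K_\alpha)$ by the set $\CondTilde_Y(K_\alpha)$ of \emph{continuous} functions $\tilde Y \to K_\alpha$. Once everything is phrased in terms of continuous functions out of the fixed compact Hausdorff (Stonean) space $\tilde Y$, both statements reduce to elementary facts about the universal property of products and quotients in the category $\CH$ of compact Hausdorff spaces.

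\emph{First identity.} For the first identity, I would argue that a continuous map $\tilde Y \to \prod_{\alpha\in A} K_\alpha$ is the same datum as a family of continuous maps $\tilde Y \to K_\alpha$, $\alpha\in A$, by the universal property of the product topology (projections are continuous, and a map into a product is continuous iff each component is). This immediately gives $\CondTilde_Y(\prod_\alpha K_\alpha) = \prod_\alpha \CondTilde_Y(K_\alpha)$ as sets, and transporting back along \eqref{eq-canrep} componentwise yields $\Cond_Y(\prod_\alpha K_\alpha) = \prod_\alpha \Cond_Y(K_\alpha)$. One point to be careful about: $\prod_\alpha K_\alpha$ is again a compact Hausdorff group by Tychonoff, so \eqref{eq-canrep} legitimately applies to it, and the bijection is natural enough that it intertwines the product projections with the coordinate projections of $\Cond_Y$.

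\emph{Second identity.} For the second identity, the key observation is the purely group-theoretic/topological homeomorphism $\prod_{\alpha\in A}(K_\alpha/L_\alpha) \cong \bigl(\prod_{\alpha\in A} K_\alpha\bigr)\big/\bigl(\prod_{\alpha\in A} L_\alpha\bigr)$: the product of the quotient maps $\prod_\alpha K_\alpha \to \prod_\alpha (K_\alpha/L_\alpha)$ is a continuous surjection whose fibers are exactly the cosets of $\prod_\alpha L_\alpha$, and since all spaces involved are compact Hausdorff this continuous bijection on the quotient is automatically a homeomorphism (and equivariant for the $\prod_\alpha K_\alpha$-action). Both sides of the claimed identity are then $\Cond_Y$ of homeomorphic compact Hausdorff spaces, hence equal; again one passes through \eqref{eq-canrep} to reduce to the statement that $\CondTilde_Y$ applied to homeomorphic spaces gives the same set of continuous maps.

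\emph{Main obstacle.} The routine content is genuinely routine; the only place demanding a little care is checking that $\prod_\alpha L_\alpha$ is indeed a \emph{closed} subgroup of $\prod_\alpha K_\alpha$ (so that the quotient is Hausdorff and the whole package sits inside $\CH$) — this follows since a product of closed sets is closed in the product topology — and that the identification $\prod_\alpha(K_\alpha/L_\alpha) \cong (\prod_\alpha K_\alpha)/(\prod_\alpha L_\alpha)$ is a \emph{homeomorphism} rather than merely a continuous bijection, which is where compactness is used. Beyond that there is nothing deep: the lemma is really just bookkeeping that lets cocycles with values in product/quotient compact groups be decomposed coordinatewise, which is exactly what is needed when feeding the output of the conditional Gram--Schmidt process into the Mackey--Zimmer machinery in the proof of Theorem \ref{thm-isometric}.
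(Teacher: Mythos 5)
Your proposal is correct and matches the paper's argument: the paper handles the second identity exactly as you do, by observing that $\prod_{\alpha\in A}(K_\alpha/L_\alpha)\equiv\bigl(\prod_{\alpha\in A}K_\alpha\bigr)/\bigl(\prod_{\alpha\in A}L_\alpha\bigr)$ is a $\CH$-isomorphism, and it outsources the first identity to \cite[Corollary 3.5]{jt19}, whose content is precisely the reduction via \eqref{eq-canrep} to the universal property of products of continuous maps out of $\tilde Y$ that you spell out. Your added care about $\prod_\alpha L_\alpha$ being closed and about compactness upgrading the continuous bijection to a homeomorphism is exactly the ``routine verification'' the paper leaves implicit.
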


\begin{proof}
The first identity is proved in \cite[Corollary 3.5]{jt19}. 
Routine verification shows that $$\prod_{\alpha\in A} (K_\alpha/L_\alpha)\equiv\prod_{\alpha\in A} K_\alpha / \prod_{\alpha\in A} L_\alpha$$ is a $\CH$-isomorphism. 
This proves the second identity.  
\end{proof}

\begin{proof}[Proof of Theorem \ref{thm-isometric}]
Let $\pi:(X,\mu,T)\to (Y,\nu,S)$ be a relatively compact $\OpProbAlgG$-extension of ergodic systems. 
Let $(\mathcal{M}_\alpha)_{\alpha\in A}$ be the family of all $\Gamma$-invariant finitely generated $L^0(Y)$-submodules of $\cltwo$. 
%By Theorem \ref{thm-compact}(ii'), $L^2(X)$ is the $L^2$ closure of $\bigcup_{\alpha\in A} \mathcal{M}_\alpha$. 
Fix $\alpha\in A$. 
Since $\mathcal{M}_\alpha$ is $\Gamma$-invariant, the map $\cdim(\mathcal{M}_\alpha)$ is also $\Gamma$-invariant and by ergodicity $\cdim(\mathcal{M}_\alpha)\equiv d_\alpha$ for some integer $d_\alpha\geq 1$.  
By Proposition \ref{prop-GSP}, we find $f^\alpha_1,\ldots,f^\alpha_{d_\alpha}\in \mathcal{M}_\alpha$ such that $\cnorm{f^\alpha_i}=1$ for all $i=1,\ldots,d_\alpha$ and $\cip{f^\alpha_i}{f^\alpha_j}=0$ whenever $i\neq j$. 
In fact, we can choose $f^\alpha_1,\ldots,f^\alpha_{d_\alpha}$ such that all $f^\alpha_i$ take values in the unit circle $\mathbb{S}$. 
For any $\gamma\in\Gamma$, then $(\tilde T^\gamma)^*f^\alpha_1,\ldots, (\tilde T^\gamma)^*f^\alpha_{d_\alpha}$ also satisfies $\cnorm{(\tilde T^\gamma)^*f^\alpha_i}=1$ for all $i=1,\ldots,d_\alpha$ and $\cip{(\tilde T^\gamma)^*f^\alpha_i}{(\tilde T^\gamma)^*f^\alpha_j}=0$ whenever $i\neq j$.   
Let 
$$\Lambda_\gamma^\alpha=
\begin{bmatrix}
a_{11}^\gamma & \ldots & a_{1d_{\alpha}}^\gamma \\
\vdots & \ddots & \vdots \\
a_{d_{\alpha}1}^\gamma & \ldots & a_{d_{\alpha}d_{\alpha}}^\gamma 
\end{bmatrix}
$$
where $a^\gamma_{ij}\in L^0(Y)$ is such that $f^\alpha_i=\sum_{j=1}^{d_\alpha} a_{ij}^\gamma (\tilde T^\gamma)^*f^\alpha_j$ for all $i=1,\ldots,d_\alpha$. 
We can identify $\Lambda_\gamma^\alpha$ with an element of the function space $L^0(Y\to \mathbb{U}(d_{\alpha}))$ of equivalence classes of $\mathbb{U}(d_{\alpha})$-valued random variables on $\tilde{Y}$, where $\mathbb{U}(d_{\alpha})$ is the group of $d_{\alpha}\times d_{\alpha}$ unitary matrices. 
We obtain the identity $F^{\alpha}=\Lambda_\gamma^\alpha F^{\alpha}_\gamma$ where $F^{\alpha}=(f^{\alpha}_1,\ldots,f^{\alpha}_{d_{\alpha}})$ and $F^{\alpha}_\gamma=((\tilde T^\gamma)^*f^{\alpha}_1,\ldots, (\tilde T^\gamma)^*f^{\alpha}_{d_{\alpha}})$. 
Let $\rho^\alpha_\gamma\coloneqq ((\Lambda_\gamma^\alpha)^*)^\op$ which is an element of $\Cond_Y(\mathbb{U}(d_{\alpha}))$. 
We thus obtain a $\mathbb{U}(d_{\alpha})$-valued $\OpProbAlgG$-cocycle $\rho^\alpha=(\rho^\alpha_\gamma)_{\gamma\in\Gamma}$ on $(Y,\nu,S)$.  
Define the $\AbsMes$-morphism $\theta_\alpha:Y\to \Baire(\mathbb{S}^{2d_{\alpha}-1})$ by $\theta_\alpha\coloneqq ((\frac{1}{\sqrt{d_{\alpha}}} F^\alpha)^*)^\op$. 

Now define $\rho=(\rho_\gamma)_{\gamma\in\Gamma}$ by $\rho_\gamma=(\rho_\gamma^\alpha)_{\alpha\in A}$, $\theta=(\theta_\alpha)_{\alpha\in A}$, $K=\prod_{\alpha\in A} \mathbb{U}(d_\alpha)$ and $L=\prod_{\alpha\in A} \mathbb{U}(d_\alpha-1)$. 
By  Lemma \ref{lem-prodquot}, $\theta\in \Cond_Y(K/L)$ is a vertical coordinate such that $\pi,\theta$ jointly generate the $\SigmaAlg$-algebra $X$, and $\rho=(\rho_\gamma)_{\gamma\in\Gamma}$ is a $\OpProbAlgG$-cocycle such that $\theta\circ T^\gamma = (\rho_\gamma\circ \pi)\theta$. 
The claim follows from Theorem \ref{thm-mz}. 
\end{proof}

\begin{remark}\label{rem-compgrext}
Conversely, every (not necessarily ergodic) $\OpProbAlgG$-homogeneous skew-product $\mathcal{Y}\rtimes_\rho K/L$ is a relatively compact $\OpProbAlgG$-extension of $\mathcal{Y}$. 
It is sufficient to show this for $\OpProbAlgG$-group skew-products $\mathcal{Y}\rtimes_\rho K$. Indeed, if $\mathcal{Y}\rtimes_\rho K\to \mathcal{Y}$ is a relatively compact $\OpProbAlgG$-extension, then so is $\mathcal{Y}\rtimes_\rho K/L\to \mathcal{Y}$ since we have a $\OpProbAlgG$-extension $\mathcal{Y}\rtimes_\rho K\to \mathcal{Y}\rtimes_\rho K/L$. 

Now $L^2(Y\rtimes_\rho K)$ is the Hilbert space tensor product of $L^2(Y)$ and $L^2(K)$. 
By the Peter-Weyl theorem, $L^2(K)$ is the closure of the union of finite-dimensional $K$-invariant subspaces. 
For any $g$ in a finite-dimensional $K$-invariant subspace of $L^2(K)$ and $f\in L^\infty(Y)$, the tensor $f\otimes g$ lies in a $\Gamma$-invariant finitely generated $L^\infty(Y)$-submodule of $L^2(Y\rtimes_\rho K)$. 
The claim follows from Theorem \ref{thm-compact}. 
\end{remark}

\begin{remark}
We call $(\mathcal{Y}_\alpha)_{\alpha\leq \beta}$ in Theorem \ref{thm-FZ} the \emph{Furstenberg tower} associated to the $\OpProbAlgG$-system $\mathcal{X}$ and call $\beta$ the \emph{length} of this tower.  
If $\mathcal{X}$ is a $\OpProbAlgG$-system of countable complexity, then the length of the Furstenberg tower of $\mathcal{X}$ must be a countable ordinal since $L^2(X)$ is separable. 
Beleznay and Foreman \cite{foreman} proved that any countable ordinal can be realized as the length of the Furstenberg tower of a $\OpProbAlgG$-system of countable complexity.
By a transfinite recursion, we can build out of $\OpProbAlgG$-group skew-products $\OpProbAlgG$-systems of Furstenberg tower of arbitrary length (cf. Remark \ref{rem-compgrext}). We leave the details to the interested reader. 
\end{remark}

\begin{remark}
Austin  establishes a relatively ergodic version of Theorem \ref{thm-isometric} for systems of countable complexity in \cite[\S 4]{austin-fund}. We think that a significantly heavier application of the topos-theoretic machinery will prove a generalization of Austin's result to systems of arbitrary (not necessarily countable) complexity. We hope to work out the details of this in future work. \end{remark}

\section{Dichotomy}\label{sec-structure}

Given any $\OpProbAlgG$-system $X$, it is well known that $L^2(X)$ is the orthogonal sum of the compact and weakly mixing factors of $X$, e.g., see Chapter 1.7 of these lecture notes \cite{peterson11} by Peterson. 
This is an ergodic theoretic manifestation of the dichotomy between structure and randomness. 
In this section, we establish a relative or conditional version of this dichotomy for $\OpProbAlgG$-extensions. 
Let us first define relatively weakly mixing functions and extensions. 
\begin{definition}
Let $\mathcal{X}=(X,\mu,T)$ and $\mathcal{Y}=(Y,\nu,S)$ be $\OpProbAlgG$-systems. 
Let $\pi:\mathcal{X}\to \mathcal{Y}$ be a $\OpProbAlgG$-extension. 
A function $f\in L^2(X)$ with $\E(f|Y)=0$ is said to be \emph{relatively weakly mixing} if for all $\eps>0$ there exists $\gamma\in \Gamma$ such that 
\[
\|\cip{(T^\gamma)^*f}{f}\|_{L^2(Y)}<\eps. 
\]
Moreover, we say that $\mathcal{X}$ is a \emph{relatively weakly mixing $\OpProbAlgG$-extension} of $\mathcal{Y}$ if all $f\in L^2(X)$ with $\E(f|Y)=0$ are relatively weakly mixing. 
Finally, we denote by $\WM$ the subspace of all relatively mixing functions in $L^2(X)$.  
\end{definition}
 
We denote by $\AP$ the $L^2$ closure of all $\Gamma$-invariant finitely generated $L^\infty(Y)$-submodules of $L^2(X)$. 
The relative or conditional version of the classical dichotomy between the compact and weakly mixing parts of a system is  
\begin{equation}\label{eq:dichotomy}
L^2(X)=\AP\oplus \WM
\end{equation}
where the sum is in the sense of the Hilbert space $L^2(X)$. 
This relative dichotomy is well understood for systems of countable measure-theoretic complexity, e.g., see \cite[\S 2.14]{tao2009poincare} for an exposition in the case of $\Z$-actions.    
Theorem \ref{thm-dichotomy1} extends this relative dichotomy to systems of uncountable complexity which we restate next for the convenience of the reader.  

\begin{theorem}[Uncountable relative dichotomy]\label{thm-dichotomy}
Let $\mathcal{X}=(X,\mu,T)$ and $\mathcal{Y}=(Y,\nu,S)$ be $\OpProbAlgG$-systems and $\pi: \mathcal{X}\to \mathcal{Y}$ be a $\OpProbAlgG$-extension.  
Exactly one of the following statements is true.  
\begin{itemize}
\item[(i)] $\pi: \mathcal{X}\to \mathcal{Y}$ is a relatively weakly mixing $\OpProbAlgG$-extension.  
\item[(ii)] There exist a $\OpProbAlgG$-system $\mathcal{Z}=(Z,\lambda,R)$ and $\OpProbAlgG$-extensions $\phi:\mathcal{X}\to \mathcal{Z}$ and $\psi:\mathcal{Z}\to \mathcal{Y}$ such that $\psi$ is a non-trivial relatively compact $\OpProbAlgG$-extension.  
\end{itemize}
\end{theorem}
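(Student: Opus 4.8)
\emph{Overall plan.} I regard Theorem~\ref{thm-dichotomy} as the relative dichotomy \eqref{eq:dichotomy}, $L^2(X)=\AP\oplus\WM$: condition~(i) amounts to $\AP=L^2(Y)$ and condition~(ii) to $\AP\supsetneq L^2(Y)$. On the one hand, any nontrivial relatively compact intermediate extension $\psi\colon\mathcal{Z}\to\mathcal{Y}$ sitting under $\mathcal{X}$ has $L^2(Y)\subsetneq L^2(Z)\subseteq\AP$ by Theorem~\ref{thm-compact}(ii'), so (ii) excludes $\AP=L^2(Y)$; the orthogonality of $\AP$ and $\WM$ then gives the mutual exclusivity of (i) and (ii). On the other hand, since the two cases are complementary, it suffices to prove the two implications
\[
\text{(A)}\quad \E(f|Y)=0,\ f\in L^2(X)\text{ not relatively weakly mixing}\ \Longrightarrow\ \AP\supsetneq L^2(Y),
\]
\[
\text{(B)}\quad \AP\supsetneq L^2(Y)\ \Longrightarrow\ \text{there is a nontrivial relatively compact intermediate extension,}
\]
the contrapositive of~(A) being the statement that $\AP=L^2(Y)$ forces~(i). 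I would carry these out in the order (A), then (B).

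\emph{Step (A).} Given a non-relatively-weakly-mixing $f$, I would first reduce to $f\in L^\infty(X)$ — so that $f\otimes\bar f\in L^2(X\times_Y X)$ — and fix $\eps_0>0$ with $\|\cip{(T^\gamma)^*f}{f}\|_{L^2(Y)}\geq\eps_0$ for all $\gamma$. Put $h:=f\otimes\bar f$ and let $K:=\E(h|\Inv(X\times_Y X))$ be the orthogonal projection of $h$ onto the $\Gamma$-invariant functions of $L^2(X\times_Y X)$, which by the Alaoglu--Birkhoff theorem (Theorem~\ref{ab-thm}) is the element of minimal $L^2$-norm in the closed convex hull of $\orb(h)$. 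The relatively independent product identity \eqref{f1f2} gives $\langle(T^\gamma\times T^\gamma)^*h,h\rangle_{L^2(X\times_Y X)}=\|\cip{(T^\gamma)^*f}{f}\|_{L^2(Y)}^2\geq\eps_0^2$ for every $\gamma$, and since $\langle\,\cdot\,,h\rangle$ is affine and continuous the same lower bound holds on the closed convex hull; as $\|K\|^2=\langle K,h\rangle$, we obtain $\|K\|\geq\eps_0>0$, so $K$ is a nonzero $\Gamma$-invariant element of $L^2(X\times_Y X)$. Truncating, $K_M:=K1_{|K|\leq M}\in L^\infty(X\times_Y X)$ is $\Gamma$-invariant and nonzero for $M$ large; by Lemma~\ref{lem-comp1} the range of $K_M\ast_Y\colon L^2(X)\to L^2(X)$ lies inside $\AP$, and $K_M\ast_Y g\in L^\infty(X)$ whenever $g\in L^\infty(X)$. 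By the disintegration identity used in the proof of Lemma~\ref{lem-comp6},
\[
\langle f,\,K_M\ast_Y\bar f\rangle_{L^2(X)}=\langle f\otimes\bar f,\,K_M\rangle_{L^2(X\times_Y X)}\ \xrightarrow[M\to\infty]{}\ \langle f\otimes\bar f,K\rangle=\|K\|^2>0,
\]
so for $M$ large $\phi_0:=K_M\ast_Y\bar f\in\AP\cap L^\infty(X)$ satisfies $\langle f,\phi_0\rangle\neq0$; since $\E(f|Y)=0$ this forces $\phi_0\notin L^2(Y)$, whence $\AP\supsetneq L^2(Y)$ and, in particular, $\AP$ contains a bounded function not measurable over $Y$.

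\emph{Step (B).} Let $\mathcal{A}\subseteq L^\infty(X)$ be the set of bounded functions $g$ whose orbit $\orb(g)$ is conditionally precompact in the sense of Theorem~\ref{thm-compact}(iii), and let $Z$ be the $\sigma$-complete Boolean subalgebra of $X$ generated by $Y$ together with the level sets of all members of $\mathcal{A}$. As $\mathcal{A}$ is $\Gamma$-invariant, so is $Z$, giving $\OpProbAlgG$-factors $\phi\colon\mathcal{X}\to\mathcal{Z}=(Z,\lambda,R)$ and $\psi\colon\mathcal{Z}\to\mathcal{Y}$. A bounded element of $\AP$ that is not $Y$-measurable (produced in Step~(A)) lies in $\mathcal{A}$, because a bounded function in $\AP$ has conditionally precompact orbit by the conditional Heine--Borel property used in the proof of Lemma~\ref{lem-comp2}; hence $\psi$ is nontrivial. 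It remains to prove that $\psi$ is relatively compact, i.e.\ that $\psi$ satisfies condition~(ii') of Theorem~\ref{thm-compact}: $L^2(Z)$ is the $L^2$-closure of the union of its finitely generated $\Gamma$-invariant $L^\infty(Y)$-submodules. Once this is shown, Theorem~\ref{thm-compact} yields~(ii).

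\emph{The main obstacle} is precisely this last verification, which I expect to be the hardest point, and it has two sub-tasks. First, one must show that $\mathcal{A}$ is a unital $\ast$-subalgebra of $L^\infty(X)$ containing $L^\infty(Y)$ and closed under uniformly bounded $L^2$-limits, so that $\mathcal{A}=L^\infty(Z)$. Closure under sums and conjugation is routine; closure under products is the delicate point, since the $L^0(Y)$-coefficients supplied by the conditional Gram--Schmidt process (Proposition~\ref{prop-GSP}) need not be bounded and a naive truncation fails. I would argue in the manner of Furstenberg: if $g,h\in\mathcal{A}$ have finite conditional $\eps$-nets $\{k_i\}$, $\{l_j\}\subseteq\cltwo$, then on the piece of the $Y$-partition where a fixed pair $(k_i,l_j)$ is simultaneously nearest to $\bigl((T^\gamma)^*g,(T^\gamma)^*h\bigr)$ one has $\cnorm{(T^\gamma)^*(gh)-k_il_j}\leq\|g\|_\infty\cnorm{(T^\gamma)^*h-l_j}+\|h\|_\infty\cnorm{(T^\gamma)^*g-k_i}$, and truncating the finitely many products $k_il_j$ in conditional norm keeps them in $\cltwo$; this exhibits a finite conditional net for $gh$. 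Closure under uniformly bounded $L^2$-limits is a similar three-$\eps$ argument, using Proposition~\ref{prop-cnorm-cip}(vi) to pass between the $L^2$- and $\pmet$-topologies. Second, once $\mathcal{A}=L^\infty(Z)$, one must show $L^2(Z)$ is the $L^2$-closure of the union of its finitely generated $\Gamma$-invariant $L^\infty(Y)$-submodules; this requires re-running, for an individual $g\in\mathcal{A}$, the passage from condition~(iii') to condition~(ii') of Theorem~\ref{thm-compact}: a conditionally precompact orbit is contained, up to arbitrarily small $\pmet$-error, in a finitely generated $\Gamma$-invariant $L^0(Y)$-submodule obtained by averaging an $\eps$-net over $\Gamma$ via Theorem~\ref{ab-thm} and then applying Proposition~\ref{prop-GSP}, and one checks via Proposition~\ref{prop-cnorm-cip}(v)--(vii) and Lemma~\ref{lem-comp4} that these submodules fill $L^2(Z)$. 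Completing both sub-tasks finishes Step~(B), and with it the dichotomy.
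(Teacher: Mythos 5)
Your proposal proves the existence half (``not (i) $\Rightarrow$ (ii)'') but not the exclusivity half, and that is a genuine gap. You reduce ``(i) and (ii) cannot both hold'' to ``the orthogonality of $\AP$ and $\WM$'' and never establish that orthogonality. It is not a formality: the definition of a relatively weakly mixing $f$ only controls $\cip{(T^\gamma)^*f}{f}$, whereas to show $\langle f,g\rangle_{L^2(X)}=0$ for $g$ in a finitely generated $\Gamma$-invariant $L^\infty(Y)$-submodule $\mathcal{M}$ one must control $\cip{(T^\gamma)^*f}{h}$ for the module elements $h$ approximating $(T^\gamma)^*g$. The paper's proof devotes its entire second half to exactly this: one writes $\|\cip{f}{g}\|_{L^2(Y)}=\|\cip{(T^\gamma)^*f}{(T^\gamma)^*g}\|_{L^2(Y)}\leq \|f\|_{L^\infty(X)}\|\cnorm{(T^\gamma)^*g-h}\|_{L^2(Y)}+\|\cip{(T^\gamma)^*f}{h}\|_{L^2(Y)}$ via the conditional Cauchy--Schwarz inequality and then must produce, for a fixed finite generating set, a $\gamma$ making the second term small --- which again goes through the invariant-kernel argument of your Step (A), not through the bare definition of $\WM$. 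Without this, ``exactly one'' is unproved (note that your own framing, ``(i) amounts to $\AP=L^2(Y)$,'' already presupposes it: to rule out a nonzero relatively weakly mixing element of $\AP$ you need precisely this orthogonality applied with $g=f$).

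On the existence half, your Step (A) coincides with the paper's construction of the nonzero $\Gamma$-invariant kernel $K$ from the Alaoglu--Birkhoff theorem; your explicit truncation $K_M$ and the computation $\langle f,K_M\ast_Y\bar f\rangle\to\|K\|^2>0$ is a correct, and in fact more careful, rendering of the paper's terse claim that some $K\ast_Y g$ lies outside $L^2(Y)$. In Step (B) you diverge: the paper builds $\mathcal{Z}$ as $\Idem$ of the von Neumann algebra $\{K\ast_Y g\in L^\infty(X):g\in L^2(X)\}$ attached to the single kernel $K$, while you build the full Furstenberg-style algebra $\mathcal{A}$ of bounded conditionally almost periodic functions (which, if carried out, yields the maximal relatively compact intermediate factor --- more than is needed here). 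This route is viable, but it shifts the whole weight onto the two sub-tasks you yourself flag, and these remain sketches with soft spots: the displayed bound for $\cnorm{(T^\gamma)^*(gh)-k_il_j}$ requires the net elements $l_j$ (not $h$) to be bounded, so the truncation of the nets must be done before, not after, forming products; and the function $\phi_0$ from Step (A) lies only in the $L^2$-closure of the union of finitely generated invariant modules, which does not by itself give it a finite conditional $\eps$-net --- you must first pass to a non-$Y$-measurable element of a single finitely generated invariant module (e.g.\ a spectral projection $P_\eps\phi_0$ as in Lemma \ref{lem-comp1}) and truncate it in conditional norm before you can place anything in $\mathcal{A}\setminus L^\infty(Y)$. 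These points are repairable, but as written Step (B) is a program rather than a proof, and the exclusivity argument is missing outright.
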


\begin{proof}
If (i) is false, there are $f\in L^2(X)$ with $\E(f|Y)=0$ and $\eps>0$ such that for all $\gamma\in \Gamma$
\begin{equation}\label{1eq2}
 \langle (T^\gamma)^*f\otimes \bar f, f\otimes \bar f\rangle_{L^2(X\times_Y X)}=\|\cip{(T^\gamma)^*f}{f}\|^2_{L^2(Y)}\geq \eps. 
\end{equation}
Let $K$ be the unique element of minimal norm in the closed convex hull of $\orb(f\otimes \bar f)$. 
By Theorem \ref{ab-thm} $K$ is $\Gamma$-invariant, and by \eqref{1eq2} $K$ is non-trivial. 
Since $\E(f|Y)=0$, there must exist $g\in L^2(X)$ such that $K\ast_Y g\in L^2(X)\backslash L^2(Y)$. 
Let $\mathcal{A}=\{K\ast_Y g\in L^\infty(X): g\in L^2(X)\}$. 
By Theorem \ref{thm-compact}, every $K\ast_Y g\in\mathcal{A}$ is contained in a closed $\Gamma$-invariant finitely generated $L^\infty(Y)$-submodule of $L^2(X)$. 
It is not difficult to check that $\mathcal{A}$ is a $\Gamma$-invariant vector subspace of $L^\infty(X)$ closed under complex conjugation and multiplication. 
Thus $\mathcal{A}$ has the structure of a $\vonNeumannG$-system such that $\Phi:\mathcal{A}\to \Linfty(X,\mu,T)$ and $\Psi:\Linfty(Y,\nu,S)\to \mathcal{A}$ are $\vonNeumannG$-factors with $\Psi$ non-trivial. 
Then $(Z,\lambda, R)\coloneqq \Idem(\mathcal{A})$, $\phi\coloneqq \Idem(\Phi)$ and $\psi\coloneqq \Idem(\Psi)$ satisfy (ii). 

Now let $f$ be relatively weakly mixing and $g\in \AP$. 
Without loss of generality, we may assume that $f$ is bounded and $g$ is an element of an invariant finitely generated $L^\infty(Y)$-submodule $\mathcal{M}$ of $L^2(X)$. 
For any $\gamma\in\Gamma$ and $h\in \mathcal{M}$, we have 
\begin{align*}
    \|\cip{f}{g}\|_{L^2(Y)}&=\|\cip{(T^\gamma)^*f}{(T^\gamma)^*g}\|_{L^2(Y)}\\
    &\leq \|\cip{(T^\gamma)^*f}{(T^\gamma)^*g - h}\|_{L^2(Y)}+\|\cip{(T^\gamma)^*f}{h}\|_{L^2(Y)} \\
    &\leq \|\cnorm{(T^\gamma)^*f}\cnorm{(T^\gamma)^*g - h}\|_{L^2(Y)} +\|\cip{(T^\gamma)^*f}{h}\|_{L^2(Y)} \\
    &\leq \|f\|_{L^\infty(X)}\|\cnorm{(T^\gamma)^*g - h}\|_{L^2(Y)} + \|\cip{(T^\gamma)^*f}{h}\|_{L^2(Y)}
\end{align*}
    where the second inequality follows from the conditional Cauchy--Schwarz inequality.  
    For any $\delta>0$, we can first choose $h$ and then $\gamma$ such that the last term is $<\delta$ uniformly in $\gamma$. 
    Hence $\|\cip{f}{g}\|_{L^2(Y)}=0$, and thus $\langle f,g\rangle_{L^2(X)}=0$. 
    It remains to show that if $f\not\in \WM$, then $f$ must correlate with some $g\in \AP$, but this follows by a similar argument as in the first part of the proof. 
\end{proof}

\begin{remark}\label{rem:wm}
A $\OpProbAlgG$-extension $\pi:\mathcal{X}\to \mathcal{Y}$ is said to be relatively ergodic if $\Inv(\mathcal{X})=\Inv(\mathcal{Y})$ (after identifying $\Inv(\mathcal{Y})$ with a subalgebra of $\mathcal{X}$). 
Using the relative dichotomy \eqref{eq:dichotomy}, it is not difficult to show that a $\OpProbAlgG$-extension $\pi:\mathcal{X}\to \mathcal{Y}$ is relatively weakly mixing if and only if $\mathcal{X}\times_\mathcal{Y} \mathcal{X}$ is a relatively ergodic extension of $\mathcal{Y}$.  
\end{remark}

\begin{remark}
The relative dichotomy \eqref{eq:dichotomy} can be extended to a conditional Halmos-von Neumann type decomposition of the conditional Hilbert space $\cltwo$ which is the external interpretation of the internal Halmos-von Neumann theorem of the topos $\mathtt{Sh}(Y)$.  
In fact, this provides an alternative route to the proof of Theorem \ref{thm-dichotomy}. 
To be more precise, construct from a $\OpProbAlgG$-extension $\pi:\mathcal{X}\to \mathcal{Y}$ an internal measure-preserving dynamical system in $\mathtt{Sh}(Y)$ (see Remark \ref{rem-internal}), 
apply to that internal system the internal Halmos-von Neumann theorem, and then externally interpret the resulting internal orthogonal decomposition. This reasoning which basically reduces to a translation process between the external and internal universes is enabled by a logical Boolean transfer principle powerful enough to yield a Halmos-von Neumann theorem in the internal logic of the topos $\mathtt{Sh}(Y)$.
In fact, the above proof of Theorem \ref{thm-dichotomy} is a reflection and validation of this transfer principle. 
\end{remark}

We can use Theorem \ref{thm-dichotomy} to prove our main result, Theorem \ref{thm-FZ}, which we now restate:

\begin{theorem}
Let $\mathcal{X}=(X,\mu,T)$ be a $\OpProbAlgG$-system. 
Then there is an ordinal number $\beta$ such that for each ordinal number $\alpha\leq \beta$ there are a $\OpProbAlgG$-system $\mathcal{Y}_\alpha=(Y_\alpha,\nu_\alpha,S_\alpha)$ and a $\OpProbAlgG$-factor $\pi_\alpha\colon \mathcal{X}\to \mathcal{Y}_\alpha$, and for every successor ordinal number $\alpha+1\leq \beta$ there is $\OpProbAlgG$-factor $\pi_{\alpha+1,\alpha} \colon \mathcal{Y}_{\alpha+1}\to \mathcal{Y}_{\alpha}$ with the following properties:
\begin{itemize}
\item[(i)] $Y_0$ is the trivial algebra.  
\item[(ii)] $\pi_{\alpha+1,\alpha} \colon \mathcal{Y}_{\alpha+1}\to \mathcal{Y}_{\alpha}$ is a non-trivial relatively compact $\OpProbAlgG$-extension. 
\item[(iii)] $\mathcal{Y}_\alpha$ is the inverse limit of the systems $\mathcal{Y}_{\eta}$, $\eta<\alpha$ for every limit ordinal number $\alpha\leq \beta$, in the sense that $Y_\alpha$ is generated by $\bigcup_{\eta<\alpha} Y_\eta$ as a $\sigma$-complete Boolean algebra.     
\item[(iv)] $\pi_{\beta} \colon \mathcal{X}\to \mathcal{Y}_\beta$ is a relatively weakly mixing $\OpProbAlgG$-extension.  
\end{itemize}
\end{theorem}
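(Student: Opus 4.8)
The plan is to build the tower $(\mathcal{Y}_\alpha)_{\alpha\leq\beta}$ by transfinite recursion, with Theorem \ref{thm-dichotomy} doing all the work at successor stages. I would begin by letting $\mathcal{Y}_0$ be the trivial system and $\pi_0\colon\mathcal{X}\to\mathcal{Y}_0$ the unique $\OpProbAlgG$-factor. Suppose $\pi_\alpha\colon\mathcal{X}\to\mathcal{Y}_\alpha$ has been constructed. Apply Theorem \ref{thm-dichotomy} to this extension. If $\pi_\alpha$ is relatively weakly mixing, set $\beta\coloneqq\alpha$ and stop. Otherwise the dichotomy furnishes a $\OpProbAlgG$-system $\mathcal{Z}$ together with factors $\phi\colon\mathcal{X}\to\mathcal{Z}$ and $\psi\colon\mathcal{Z}\to\mathcal{Y}_\alpha$ with $\psi$ a non-trivial relatively compact extension; I then set $\mathcal{Y}_{\alpha+1}\coloneqq\mathcal{Z}$, $\pi_{\alpha+1}\coloneqq\phi$, and $\pi_{\alpha+1,\alpha}\coloneqq\psi$. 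Here one uses that the factorization built in the proof of Theorem \ref{thm-dichotomy} actually satisfies $\pi_\alpha=\psi\circ\phi$ (this is immediate from the construction of $\mathcal{Z}$ via $\Idem$ of an intermediate von Neumann algebra between $\Linfty(\mathcal{Y}_\alpha)$ and $\Linfty(\mathcal{X})$), so that the family of factors stays coherent.

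At a limit ordinal $\alpha$ I would define $\mathcal{Y}_\alpha$ as the inverse limit of the $\mathcal{Y}_\eta$, $\eta<\alpha$. Using the coherent factors $\pi_\eta\colon\mathcal{X}\to\mathcal{Y}_\eta$ to realize each $Y_\eta$ as a $\Gamma$-invariant $\sigma$-complete subalgebra of $X$, let $Y_\alpha$ be the $\sigma$-complete Boolean subalgebra of $X$ generated by $\bigcup_{\eta<\alpha}Y_\eta$, equipped with the restriction of $\mu$ and of the action $T$ (the latter is legitimate since $\bigcup_{\eta<\alpha}Y_\eta$, hence the subalgebra it generates, is $\Gamma$-invariant). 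By the countable chain condition (Remark \ref{rem-ccc}) this is automatically a complete Boolean subalgebra, so $\mathcal{Y}_\alpha=(Y_\alpha,\mu|_{Y_\alpha},T|_{Y_\alpha})$ is a $\OpProbAlgG$-system, with $\pi_\alpha\colon\mathcal{X}\to\mathcal{Y}_\alpha$ induced by the inclusion $Y_\alpha\hookrightarrow X$ and $\pi_{\alpha,\eta}\colon\mathcal{Y}_\alpha\to\mathcal{Y}_\eta$ the evident factors; property (iii) holds by construction.

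It remains to see that the recursion terminates, i.e.\ that $\pi_\beta$ is eventually relatively weakly mixing for some ordinal $\beta$. Via the Koopman embeddings, $(L^2(Y_\alpha))_\alpha$ is an increasing transfinite chain of closed subspaces of $L^2(X)$, with $L^2(Y_\alpha)=\overline{\bigcup_{\eta<\alpha}L^2(Y_\eta)}$ at limit stages. Whenever the recursion takes a successor step $\alpha\mapsto\alpha+1$, the extension $\pi_{\alpha+1,\alpha}$ is non-trivial, so $L^2(Y_\alpha)\subsetneq L^2(Y_{\alpha+1})$ strictly; choosing a unit vector $v_\alpha\in L^2(Y_{\alpha+1})\ominus L^2(Y_\alpha)$ and noting that $v_\eta\in L^2(Y_{\eta+1})\subseteq L^2(Y_\alpha)$ for every $\eta<\alpha$, the family $\{v_\alpha\}$ is orthonormal in $L^2(X)$. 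Hence the number of successor steps is at most the Hilbert-space dimension $\kappa$ of $L^2(X)$, so the recursion must halt at some ordinal $\beta<\kappa^+$; in particular, when $\mathcal{X}$ has countable complexity $\beta$ is countable. Properties (i), (ii) and (iv) then hold by construction and by Theorem \ref{thm-dichotomy}.

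The only genuinely substantive input is Theorem \ref{thm-dichotomy}; the transfinite assembly above is routine. The two points that need a little care — and which I would write out carefully — are the compatibility of the dichotomy's factorization with the already-constructed $\pi_\alpha$, and checking that the inverse-limit construction at limit stages indeed produces an object of $\OpProbAlgG$ with the expected factor relations. I do not anticipate any serious obstacle beyond this bookkeeping.
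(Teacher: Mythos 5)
Your proposal is correct and follows essentially the same route as the paper: transfinite recursion with the dichotomy (Theorem \ref{thm-dichotomy}) supplying the non-trivial relatively compact intermediate factor at successor stages and the generated $\sigma$-complete subalgebra serving as inverse limit at limit stages. The only differences are cosmetic — the paper takes $\mathcal{Y}_{\alpha+1}$ to be the maximal compact intermediate factor $\mathtt{AP}_{X|Y_\alpha}$ rather than the $\mathcal{Z}$ handed over by the dichotomy, and your explicit cardinality/orthonormality argument for termination fills in a step the paper leaves as ``this process will eventually terminate.''
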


\begin{proof}
Let $\mathcal{Y}_0$ be the trivial $\OpProbAlgG$-system, and let $\pi_0\colon \mathcal{X}\to \mathcal{Y}_0$ be the corresponding $\OpProbAlgG$-factor map. 
By Theorem \ref{thm-dichotomy}, $\pi_0\colon \mathcal{X}\to \mathcal{Y}_0$ is either a relatively weakly mixing $\OpProbAlgG$-extension in which case we set $\beta=0$, or there are a $\OpProbAlgG$-system $\mathcal{Z}$ and $\OpProbAlgG$-extensions $\phi\colon \mathcal{X}\to \mathcal{Z}$ and $\psi\colon \mathcal{Z}\to \mathcal{Y}_0$ such that $\psi \colon \mathcal{Z} \to \mathcal{Y}_0$ is a non-trivial relatively compact $\OpProbAlgG$-extension of $\mathcal{Y}_0$. 
Then set $\mathcal{Y}_1\coloneqq\mathtt{AP_{X|Y_0}}$, let $\pi_1\colon \mathcal{X}\to \mathcal{Y}_1$ and $\pi_{1,0}\colon \mathcal{Y}_1\to \mathcal{Y}_0$ be the corresponding $\OpProbAlgG$-factors, and repeat the previous step with $\mathcal{X}$ and $\mathcal{Y}_1$ instead of $\mathcal{Y}_0$ using Theorem \ref{thm-dichotomy}. This process may end here if $\pi_1\colon \mathcal{X}\to \mathcal{Y}_1$ is a relatively weakly mixing $\OpProbAlgG$-extension and we set $\beta=1$, or we may need to continue and repeat the previous step again.  
Now repeating the previous steps in a transfinite recursion, while passing to inverse limits at limit ordinals, this process will eventually terminate at an ordinal number $\beta$.  
\end{proof}

\end{document}